\documentclass[12pt]{amsart}

\usepackage{amsmath,amssymb,a4}
\usepackage{amsfonts}
\usepackage{enumerate}
\usepackage[mathscr]{eucal}
\usepackage[usenames]{color}
\usepackage{verbatim}
\usepackage{amsthm}
\usepackage{amscd}
\usepackage{appendix}
\usepackage{tikz}
\usepackage{hyperref}
\usepackage{cleveref}
\usepackage[utf8]{inputenc}
\usepackage{amsxtra,array,mathrsfs}
\usepackage[shortlabels]{enumitem}

\newtheorem{innercustomgeneric}{\customgenericname}
\providecommand{\customgenericname}{}

\newcommand{\newcustomtheorem}[2]{\newenvironment{#1}[1]
  {\renewcommand\customgenericname{#2}
   \renewcommand\theinnercustomgeneric{##1}\innercustomgeneric}{\endinnercustomgeneric}}

\newcustomtheorem{customthm}{Theorem}

\newcommand{\newcustomlemma}[2]{\newenvironment{#1}[1]
  {\renewcommand\customgenericname{#2}
   \renewcommand\theinnercustomgeneric{##1} \innercustomgeneric}{\endinnercustomgeneric}}

\newcustomlemma{customlemma}{Lemma}

\newcustomlemma{customproposition}{Proposition}

\newcustomlemma{customclaim}{Claim}

\theoremstyle{plain}
\newtheorem{theorem}{Theorem}[section]
\newtheorem{lemma}[theorem]{Lemma}
\newtheorem{corollary}[theorem]{Corollary}
\newtheorem{proposition}[theorem]{Proposition}
\theoremstyle{definition}
\newtheorem{remark}[theorem]{Remark}

\newtheorem{definition}[theorem]{Definition}

\numberwithin{equation}{section}



\def\esssup{\operatornamewithlimits{ess\,sup}}
\def\essinf{\operatornamewithlimits{ess\,inf}}

\newcommand{\bbr}{\mathbb{R}}
\newcommand{\bbrn}{\mathbb{R}^n}
\newcommand{\bbz}{\mathbb{Z}}

\newcommand{\qq}{\qquad}

\def\HH{\mathcal H}
\def\UU{\mathcal U}
\def\VV{\mathcal V}

\begin{document}

\title[Weighted Extensions of Stein's Theorem]{Weighted Extensions of Stein's Theorem for Linear and Multilinear Operators}

\author {Mar\'ia J.\ Carro} 
\address{M. J.\ Carro,    Departamento de An\'alisis y Matem\'atica Aplicada. Universidad Complutense de Madrid, Madrid 28280, Spain}
\email{mjcarro@ucm.es}
\author{Bae Jun Park}
\address{B. Park, Department of Mathematics, Sungkyunkwan University, Suwon 16419, Republic of Korea}
\email{bpark43@skku.edu}

 \subjclass[2020]{42B20, 42B35, 46E30}
\keywords{ $A_p$ weights; Herz spaces; Ces\`aro spaces; rough singular integrals; multilinear singular integrals}
\thanks{The first author was partially supported by grants PID2020-113048GB-I00, PID2024-155917NBI00, and CEX-2023-001347-S, funded by MCIN/AEI/ 10.13039/501100011033, and Grupo UCM970966. The second author was supported in part by NRF grant RS-2025-20512969 and by the Open KIAS Center at Korea Institute for Advanced Study. }

\begin{abstract}
We study weighted estimates for linear and multilinear integral operators whose kernels satisfy only size conditions. Extending a theorem of E. Stein and its refinement by Soria and Weiss, we prove weighted estimates on Herz and Ces\`aro type spaces, together with multilinear strong-type and weak-type analogues. As applications, we derive consequences for a range of rough singular integral operators and related variants, including linear, oscillatory, and multilinear settings.
\end{abstract}

\maketitle
\pagestyle{headings}\pagenumbering{arabic}\thispagestyle{plain}

\section{Introduction} 
In 1967, E. Stein \cite{St1967} proved the following result for power weights:  Let
\begin{equation}\label{operatorT}
Tf(x)=\int_{\bbrn}K(x,y)f(y)\,dy
\end{equation}
be an integral operator whose kernel satisfies
\begin{equation}\label{sizecon}
|K(x,y)|\le \frac{B}{|x-y|^n}, \qquad x\neq y,
\end{equation}
for some constant $B>0$. If, for some $1<p_0<\infty$,
\begin{equation}\label{LpboundT}
T:L^{p_0}(\bbrn)\longrightarrow L^{p_0}(\bbrn),
\end{equation}
then
$$T:L^{p_0}\big( |x|^\alpha\big)\longrightarrow L^{p_0}\big(|x|^\alpha\big)$$
for every
$$ -n<\alpha<n(p_0-1).$$
This range coincides precisely with the range for which the power weight $|x|^\alpha$ belongs to the Muckenhoupt class $A_{p_0}$, introduced in \cite{m:m}, whose definition will be recalled in Subsection \ref{subsec:Apweights}.
It is worth pointing out that, when the result in \cite{St1967} appeared, Muckenhoupt weights had not yet been introduced in the literature.
It is therefore natural to ask whether the same conclusion holds for every $w\in A_{p_0}$. 
However, such a statement would be much stronger than Stein's original statement, 
since, if such an estimate were true for every $w\in A_{p_0}$, then Rubio de Francia's extrapolation would imply that $T$ is bounded on $L^q(\bbrn)$ for every $q>1$. Thus the size estimate alone should not be expected to yield the full $A_{p_0}$-weighted theory without additional regularity or structural assumptions on the kernel.   
In 1994, F. Soria and  G. Weiss \cite{So_We1994} showed, however, that the conclusion remains valid for a distinguished subclass of $A_p$ weights satisfying a dyadic annular comparability condition.
To state the result of Soria and Weiss, for $k\in\bbz$, set
\begin{equation*}
I_k:=\big\{x\in\bbrn:2^{k-1}\le |x|<2^k\big\}, \quad I_k^*:=\big\{x\in\bbrn:2^{k-2}\le |x|<2^{k+1}\big\}.
\end{equation*}

\begin{customthm}{A}[\cite{So_We1994}]\label{SoWetheorem}
Let $T$ be given by \eqref{operatorT}, and assume that \eqref{sizecon} holds.
Let $w$ be a weight satisfying
\begin{equation}\label{consondyad}
\esssup_{I_k} w \le C\,\essinf_{I_k^*} w, \qquad k\in\bbz.
\end{equation}
Then the following assertions hold.
\begin{enumerate}
\item  If $1<p_0<\infty$, $w\in A_{p_0}$, and \eqref{LpboundT} holds, then
$$
T:L^{p_0}(w)\longrightarrow L^{p_0}(w).
$$
\item  If $1\le p_0<\infty$, $w\in A_{p_0}$, and
$$T:L^{p_0}(\bbrn)\longrightarrow L^{p_0,\infty}(\bbrn),$$
then
$$T:L^{p_0}(w)\longrightarrow L^{p_0,\infty}(w).
$$
\end{enumerate}
\end{customthm}

The purpose of this paper is to continue this line of investigation in two directions. 
First, we show that the Stein--Soria--Weiss principle extends beyond weighted Lebesgue spaces, namely to homogeneous Herz spaces and to weighted Ces\`aro type spaces. 
Second, we establish multilinear analogues for operators satisfying the natural multilinear size condition, and we apply these results to several classes of rough singular integral operators.

\medskip

\subsection{Herz spaces} 

We begin with the Herz space extension. Let us recall the definition of weighted homogeneous Herz spaces.
\begin{definition}
Let $\alpha\in\bbr$, $0<p,q\le \infty$, and let $w$ be a weight on $\bbrn$. 
We define
$$\dot K_p^{\alpha,q}(w):=\Big\{ f\in L^p_{\mathrm{loc}}\big(\bbrn\setminus\{0\}\big):\Vert f\Vert_{\dot K_p^{\alpha,q}(w)} <\infty \Big\},$$
where
$$\Vert f\Vert_{\dot K_p^{\alpha,q}(w)}:=\bigg(\sum_{k\in\bbz} 2^{k\alpha q} \big\Vert \chi_{I_k}f\big\Vert_{L^p(w)}^q \bigg)^{1/q},$$
with the usual modification when $q=\infty$. In the unweighted case
$w\equiv 1$, we simply write $\dot K_p^{\alpha,q}(\bbrn)$.
\end{definition}

Herz spaces were introduced by C. Herz in \cite{h:h} and  many researchers have contributed to the theory (see, for example,  \cite{fw:fw, gh:gh, gly:gly}), since they have proved to be  quite useful in several problems in harmonic analysis, such as  to characterize the multipliers on Hardy spaces \cite{bs:bs}.

Our first result extends  Stein's theorem  to the setting of unweighted homogeneous Herz spaces.

\begin{theorem}\label{cor:Herz-power}
Let $T$ be given by \eqref{operatorT}, and assume that \eqref{sizecon} and
\eqref{LpboundT} hold for some $1<p_0<\infty$. Let $0<q\le \infty$. If
\begin{equation}\label{alphacond}
-\frac{n}{p_0}<\alpha<\frac{n}{p_0'},
\end{equation}
then
$$T:\dot K_{p_0}^{\alpha,q}(\bbrn)\longrightarrow \dot K_{p_0}^{\alpha,q}(\bbrn).$$
\end{theorem}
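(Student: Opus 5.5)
The plan is to decompose $f=\sum_j f_j$ with $f_j=f\chi_{I_j}$, and to estimate $\|\chi_{I_k}Tf\|_{L^{p_0}}$ by splitting the sum over $j$ into a local part $|j-k|\le 1$ and a far part $|j-k|\ge 2$. The local part is handled by the unweighted hypothesis \eqref{LpboundT}:
$$
\|\chi_{I_k}T(f_{k-1}+f_k+f_{k+1})\|_{L^{p_0}}\lesssim \sum_{|j-k|\le1}\|f_j\|_{L^{p_0}},
$$
and since the weights $2^{k\alpha}$ and $2^{j\alpha}$ are comparable when $|j-k|\le 1$, this contributes at most a constant multiple of $\|f\|_{\dot K^{\alpha,q}_{p_0}}$.

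For the far part only the size condition \eqref{sizecon} is used.

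If $j\le k-2$, then for $x\in I_k$ and $y\in I_j$ we have $|x-y|\gtrsim 2^k$. Hence $|Tf_j(x)|\lesssim 2^{-kn}\|f_j\|_{L^1}\le 2^{-kn}2^{jn/p_0'}\|f_j\|_{L^{p_0}}$ by Hölder. Taking the $L^{p_0}(I_k)$ norm gives
$$
\|\chi_{I_k}Tf_j\|_{L^{p_0}}\lesssim 2^{-(k-j)n/p_0'}\|f_j\|_{L^{p_0}}.
$$

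If $j\ge k+2$, then $|x-y|\gtrsim 2^j$, so $|Tf_j(x)|\lesssim 2^{-jn}2^{jn/p_0'}\|f_j\|_{L^{p_0}}$. Integrating over $I_k$ gives
$$
\|\chi_{I_k}Tf_j\|_{L^{p_0}}\lesssim 2^{-(j-k)n/p_0}\|f_j\|_{L^{p_0}}.
$$

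Multiplying by $2^{k\alpha}$, set $a_j=2^{j\alpha}\|f_j\|_{L^{p_0}}$. This yields
$$
2^{k\alpha}\|\chi_{I_k}Tf\|_{L^{p_0}}\lesssim \sum_{|j-k|\le1}a_j+\sum_{j\le k-2}2^{-(k-j)(n/p_0'-\alpha)}a_j+\sum_{j\ge k+2}2^{-(j-k)(n/p_0+\alpha)}a_j .
$$
Both exponents are positive exactly under \eqref{alphacond}. The right-hand side is therefore the convolution of $(a_j)$ with a geometrically decaying kernel $b$ on $\bbz$.

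The main step is to bound this discrete convolution in $\ell^q$ for the full range $0<q\le\infty$.
\begin{itemize}
\item For $q\ge1$, Young's inequality $\|b*a\|_{\ell^q}\le\|b\|_{\ell^1}\|a\|_{\ell^q}$ suffices.
\item For $0<q<1$, the $q$-triangle inequality $(\sum_j c_j)^q\le\sum_j c_j^q$ gives $\|b*a\|_{\ell^q}^q\le\|b\|_{\ell^q}^q\|a\|_{\ell^q}^q$, and $\|b\|_{\ell^q}<\infty$ because $b$ decays geometrically.
\end{itemize}
This gives $\|Tf\|_{\dot K^{\alpha,q}_{p_0}}\lesssim\|f\|_{\dot K^{\alpha,q}_{p_0}}$.

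One technical point remains: $Tf$ must be defined for $f$ in the Herz space, which is only locally $L^{p_0}$ away from the origin. The plan is to work first with finite sums of the $f_j$ and pass to the limit. The kernel estimates above show that $\sum_j Tf_j$ converges absolutely almost everywhere on each $I_k$ whenever $(a_j)\in\ell^q$, since the weights decay geometrically.
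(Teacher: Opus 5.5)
Your proof is correct, and it is in effect the paper's argument written out directly for power weights. The paper argues at the weighted level instead. It deduces the statement from Theorem \ref{Herzextrapol} by taking $w(x)=|x|^{p_0\alpha}\in A_{p_0}$ and noting that $\Vert f\Vert_{\dot K^{0,q}_{p_0}(w)}\sim\Vert f\Vert_{\dot K^{\alpha,q}_{p_0}(\bbrn)}$.

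The proof of Theorem \ref{Herzextrapol} uses the same local/far split as you do:
\begin{itemize}
\item $T_{\mathrm{in}}$ is controlled by the unweighted bound together with the annular comparability \eqref{consondyad};
\item $T_{\mathrm{out}}$ is dominated pointwise by $\HH(|f|)$, and Proposition \ref{keylemHbdWi} gets the decay $2^{-\epsilon|k-j|}$ from the $A_{p_0}$ condition and the $A_\infty$ reverse doubling of $w$ and $\sigma=w^{-1/(p_0-1)}$.
\end{itemize}
For the power weight, $w(B_{2^k})^{1/p_0}\sim 2^{k(n/p_0+\alpha)}$ and $\sigma(B_{2^j})^{1/p_0'}\sim 2^{j(n/p_0'-\alpha)}$. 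Substituting these, the paper's decay factors become exactly your $2^{-(k-j)(n/p_0'-\alpha)}$ and $2^{-(j-k)(n/p_0+\alpha)}$. The final $\ell^q$ summation, with Young's inequality for $q\ge1$ and the $q$-triangle inequality for $q<1$, is the same as well.

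Your route is self-contained and needs no weight theory. It also shows transparently that \eqref{alphacond} is exactly the positivity of the two decay rates. The paper's route costs the $A_p$/$A_\infty$ machinery, but it gives the estimate for every $A_{p_0}$ weight satisfying \eqref{consondyad}, with the power case as a one-line corollary.

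Your closing remark on how $Tf$ is defined for $f$ in the Herz space covers a point the paper passes over silently. For $q=\infty$, finite sums of the $f_j$ are not dense. There it is your observation that the far series $\sum_{|j-k|\ge2}Tf_j$ converges absolutely on $I_k$ whenever $(a_j)\in\ell^\infty$ that actually supplies the definition, rather than a limiting argument.
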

Here, $p_0'$ is the conjugate index of $p_0$.
In fact, this result will be an immediate consequence of the following one, which is the corresponding counterpart of Theorem \ref{SoWetheorem}.

\begin{theorem}\label{Herzextrapol}
Let $T$ be given by \eqref{operatorT}, and assume that \eqref{sizecon} and
\eqref{LpboundT} hold for some $1<p_0<\infty$. If $w\in A_{p_0}$ satisfies
\eqref{consondyad}, then for every $0<q\le \infty$,
$$T:\dot K_{p_0}^{0,q}(w)\longrightarrow \dot K_{p_0}^{0,q}(w).$$
\end{theorem}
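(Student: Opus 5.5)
We split $T$ into local and non-local parts with respect to the dyadic annuli. For $f\in \dot K_{p_0}^{0,q}(w)$, write $f=\sum_j f_j$ with $f_j=f\chi_{I_j}$. Then for each $k$,
$$
\chi_{I_k}Tf=\chi_{I_k}T\Big(\sum_{|j-k|\le 1}f_j\Big)+\chi_{I_k}T\Big(\sum_{j\le k-2}f_j\Big)+\chi_{I_k}T\Big(\sum_{j\ge k+2}f_j\Big)=:A_k+B_k+C_k .
$$

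\textbf{Local part.} The function $g_k=\sum_{|j-k|\le1}f_j$ is supported in $I_k^*$, and $\|g_k\|_{L^{p_0}(w)}\lesssim \sum_{|j-k|\le 1}\|\chi_{I_j}f\|_{L^{p_0}(w)}$. By Theorem \ref{SoWetheorem}(1) (this is where $w\in A_{p_0}$, \eqref{consondyad} and \eqref{LpboundT} are used), $\|A_k\|_{L^{p_0}(w)}\le \|Tg_k\|_{L^{p_0}(w)}\lesssim \|g_k\|_{L^{p_0}(w)}$. Taking the $\ell^q$ norm in $k$ (with the quasi-triangle inequality when $q<1$) controls this piece by $\|f\|_{\dot K^{0,q}_{p_0}(w)}$. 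One could also bypass Theorem A by using \eqref{consondyad} to replace $w$ by the constant $w(2^k)$ on $I_k^*$ and applying \eqref{LpboundT} directly; either route works.

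\textbf{Non-local parts.} These use only \eqref{sizecon}. Let $x\in I_k$.
\begin{itemize}
\item[(i)] If $y\in I_j$ with $j\le k-2$, then $|x-y|\sim 2^k$, so $|B_k(x)|\lesssim 2^{-kn}\sum_{j\le k-2}\|f_j\|_{L^1}$.
\item[(ii)] If $j\ge k+2$, then $|x-y|\sim 2^j$, so $|C_k(x)|\lesssim \sum_{j\ge k+2}2^{-jn}\|f_j\|_{L^1}$.
\end{itemize}
By H\"older, $\|f_j\|_{L^1}\le \|f_j\|_{L^{p_0}(w)}\,\sigma(I_j)^{1/p_0'}$ with $\sigma=w^{1-p_0'}$. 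Hence
$$
\|\chi_{I_k}\|_{L^{p_0}(w)}\,\|f_j\|_{L^1}\le w(I_k)^{1/p_0}\sigma(I_j)^{1/p_0'}\|f_j\|_{L^{p_0}(w)} .
$$
The key step is the estimate of the weight factors. Let $B_m$ denote the ball of radius $2^m$ and $m=\max(j,k)$. The $A_{p_0}$ condition gives $w(B_m)^{1/p_0}\sigma(B_m)^{1/p_0'}\lesssim 2^{mn}$, and the doubling/reverse doubling properties of $w$ and $\sigma$ give, for some $\delta>0$,
$$
w(I_k)\lesssim 2^{-\delta(m-k)n}\,w(B_m),\qquad \sigma(I_j)\lesssim 2^{-\delta(m-j)n}\,\sigma(B_m).
$$
(One may instead use \eqref{consondyad}, which makes $w$ essentially constant on annuli; then $w(I_k)\sim w(2^k)2^{kn}$, and $A_{p_0}$ applied to $B_m$ yields the same decay.) The result is the kernel bound
$$
2^{-mn}\,w(I_k)^{1/p_0}\sigma(I_j)^{1/p_0'}\lesssim 2^{-\varepsilon|j-k|}
$$
for some $\varepsilon>0$. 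Then
$$
\|B_k\|_{L^{p_0}(w)}+\|C_k\|_{L^{p_0}(w)}\lesssim \sum_j 2^{-\varepsilon|j-k|}\,\|f_j\|_{L^{p_0}(w)} .
$$
This geometric convolution in $k$ is bounded on $\ell^q$ for every $0<q\le\infty$; for $q<1$, use $(\sum a_j)^q\le\sum a_j^q$ and sum the geometric series.

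\textbf{Main obstacle.} The main obstacle is getting a genuine geometric decay $2^{-\varepsilon|j-k|}$, not just a uniform bound, in the weight estimate. For $B_k$ this needs reverse doubling of $\sigma$, i.e. $\sigma(B_j)\le 2^{-\delta n(k-j)}\sigma(B_k)$ with $\delta>0$, which holds because $\sigma\in A_{p_0'}$ is doubling, and doubling implies reverse doubling in $\mathbb R^n$. For $C_k$ it needs the same property for $w$. The plan is to invoke these standard facts from the $A_p$ preliminaries of Subsection \ref{subsec:Apweights}, using $w(B_m)\sigma(B_m)^{p_0-1}\lesssim |B_m|^{p_0}$.
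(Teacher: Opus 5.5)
Your proposal is correct and follows essentially the paper's proof. Your direct option for the local piece is exactly what the paper does: it uses \eqref{consondyad} to pass from $\sup_{I_k}w$ to $\inf_{I_k^*}w$ around the unweighted bound \eqref{LpboundT}. Your off-diagonal estimate inlines Proposition \ref{keylemHbdWi}: the pointwise bound $2^{-n\max\{j,k\}}\Vert \chi_{I_j}f\Vert_{L^1}$, H\"older with $\sigma$, and the $A_{p_0}$ and $A_\infty$ decay giving $2^{-\varepsilon|j-k|}$. The paper instead first dominates that part by $\HH(|f|)$ and proves the Herz bound for $\HH$ separately.
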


We observe that to prove Theorem \ref{cor:Herz-power}, as a consequence of Theorem \ref{Herzextrapol}, it will be enough to take the power weight $w(x)=|x|^{p_0\alpha}$ and observe that   \eqref{alphacond} is equivalent to
$$-n<p_0\alpha<n(p_0-1),$$
and hence $w(x)=|x|^{p_0\alpha}\in A_{p_0}$. 
Moreover, for $x\in I_k$, we have $|x|\sim 2^k$, so
$$\big\Vert \chi_{I_k}f \big\Vert_ {L^{p_0}(w)}\sim 2^{k\alpha}\big\Vert \chi_{I_k}f \big\Vert_ {L^{p_0}(\bbrn)}.$$
Therefore
$$\Vert f\Vert_{\dot K_{p_0}^{0,q}(w)} \sim\Vert f\Vert_{\dot K_{p_0}^{\alpha,q}(\bbrn)}.$$
This proves Theorem \ref{cor:Herz-power} as a consequence of Theorem \ref{Herzextrapol}.

\hfill

\subsection{Ces\`aro spaces:} Concerning Ces\`aro spaces, let us consider the operator
\begin{equation}\label{uufxdef}
\UU f(x):=\frac{1}{|x|^n}\int_{|y|\le |x|} |f(y)| \,dy, 
\end{equation}
which is  a natural $n$-dimensional Hardy averaging operator. 
Up to the dimensional normalizing constant, it coincides with the radial averaging operator
$$f\mapsto \frac{1}{|B(0,|x|)|}\int_{B(0,|x|)} |f(y)|\,dy .$$
This operator was introduced by Faris \cite{f:f} and sharp $L^p(\bbrn)$ bounds for this operator were obtained by Christ and Grafakos \cite{Ch_Gr1995}.  
Let us  introduce the following spaces  which are closely related with the so-called Ces\`aro spaces (see, for example, \cite{am:am, am2:am2, lm:lm, s:s}). 

\begin{definition} Given a weight $w$, the weighted Ces\`aro space is defined by
$$
\mathcal C_p(w)=\big\{ f:  \UU f\in L^p(w)\big\},
$$
with the norm $\Vert f\Vert_{\mathcal C_p(w)}=\Vert \UU f\Vert_{L^p(w)}$.
\end{definition}

It is useful to observe that, whenever $w\in A_p$, one has $L^p(w)\subset \mathcal C_p(w)$. Thus, in situations where the full weighted estimate $T:L^p(w)\to L^p(w)$ cannot be expected from the size condition alone, the Ces\`aro target space provides a natural weaker replacement.

We shall also need the following local version
$$\mathcal{C}_{p, loc}(w)=\big\{ f:  \UU f \in L^p_{loc}(w)\big\}, $$ 
and, for given $0<r<p$, we define
$$\mathcal{C}_p^{(r)}(w)=\big\{ f:  \UU (|f|^r) \in L^{p/r}(w)\big\},$$
with the norm $\Vert f\Vert_{\mathcal C_p^{(r)}(w)}=\big\Vert \UU (|f|^r) \big\Vert_{L^{p/r}(w)}^{1/r}$.

Let us also define the operator
$$S_Tf(x):=\UU \big(Tf\big)(x).$$
In this context, we establish the following local weighted estimate for $T$.

\begin{theorem}\label{mainlinear1}
Let $T$ be given by \eqref{operatorT}, and assume that \eqref{sizecon} and \eqref{LpboundT} hold for some $1<p_0<\infty$. 
Then,  
$$T:L^{p_0}\log L(w)\longrightarrow \mathcal C_{p_0, loc}(w), \qquad \forall w\in A_1.$$
More precisely, for every $R>0$,
$$\bigg(\int_{B_R}|S_Tf(x)|^{p_0} w(x)\,dx \bigg)^{1/p_0} \lesssim \max\big\{1,w(B_R)^{1/p_0}\big\} \Vert f\Vert_{L^{p_0}\log L(w)}$$
where $B_R:=\{x\in\bbrn : |x|<R\}$ denotes the ball of radius $R$, centered at the origin.
\end{theorem}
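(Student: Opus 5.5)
The strategy is to reduce the estimate for $S_Tf=\UU(Tf)$ to an unweighted $L^{p_0}$ bound plus an $L\log L$ bound coming from the $A_1$ condition. The tool is the maximal-type control of $\UU$ together with extrapolation-style duality. Since $w\in A_1$, we have $Mw\lesssim w$. The natural first step is therefore to dualize: for $g\ge 0$ supported in $B_R$ with $\Vert g\Vert_{L^{p_0'}(w)}\le 1$,
$$\int_{B_R} S_Tf\, g\, w\,dx=\int |Tf(y)|\,\UU^*(gw)(y)\,dy,\qquad \UU^*h(y)=\int_{|x|\ge |y|}\frac{h(x)}{|x|^n}\,dx,$$
where $\UU^*$ is the adjoint Hardy operator. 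The weight $\UU^*(gw)$ is essentially radially decreasing, and this is where the size condition \eqref{sizecon} is used: it lets us compare $T$ against $\UU$ and $\UU^*$ at far scales.

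Next, I would split $f=f\chi_{\{|y|\le 2|x|\}}+f\chi_{\{|y|>2|x|\}}$ in the usual Soria--Weiss fashion. Writing $Tf=T(f\chi_{I_j^*})$ locally on each annulus $I_j$ plus a tail, \eqref{sizecon} gives the pointwise bound
$$|Tf(x)-T(f\chi_{I_j^*})(x)|\lesssim \UU f(x)+\UU^* f(x), \qquad x\in I_j.$$
The Hardy parts are handled directly. Indeed, $\UU$ is bounded on $L^{p_0}(w)$ for $w\in A_1\subset A_{p_0}$ since $\UU f\lesssim Mf$, and hence $\UU\UU f\lesssim MMf$. The operator $\UU\UU^*$ is controlled by $\UU+\UU^*$ plus a logarithmic term. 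This logarithmic term, of the type $\frac{1}{|x|^n}\int_{|y|\le|x|}|f|\log(|x|/|y|)\,dy$, is dominated by $M_{L\log L}f\approx M^2f$. The point is that $M^2$ is not bounded on $L^{p_0}(w)$ uniformly for general $A_1$ weights with this constant structure, so I would instead use the local estimate
$$\Big\Vert M^2 f\Big\Vert_{L^{p_0}(w)}\lesssim \Vert f\Vert_{L^{p_0}\log L(w)},$$
where the $L\log L$ gain absorbs one maximal function. This explains the $L^{p_0}\log L(w)$ hypothesis.

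For the local pieces $T(f\chi_{I_j^*})$, I apply \eqref{LpboundT} on each annulus. Because $w\in A_1$, we have $w(x)\le C\,\essinf_{I_j^*}w$ only up to $Mw$. The plan is to instead use $\UU$ to average: on $B_R$, one has
$$\UU(|T(f\chi_{I_j^*})|)(x)\le \frac{1}{|x|^n}\Vert T(f\chi_{I_j^*})\Vert_{L^1(B(0,|x|))},$$
which is bounded by Hölder and \eqref{LpboundT} by $|x|^{-n/p_0}\Vert f\chi_{I_j^*}\Vert_{L^{p_0}}$. Then
$$\Vert f\chi_{I_j^*}\Vert_{L^{p_0}}\le (\essinf_{I_j^*} w)^{-1/p_0}\Vert f\chi_{I_j^*}\Vert_{L^{p_0}(w)},$$
while $A_1$ gives $w(B(0,|x|))/|x|^n\lesssim \essinf w$ on that ball. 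Summing in $j$ with $2^j\lesssim |x|$ produces the geometric-type sum. Integrating over $B_R$ against $w$ produces the factor $w(B_R)^{1/p_0}$ where the sum fails to be geometric. The $\max\{1,\cdot\}$ covers the remaining terms bounded independently of $R$.

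The main obstacle is the step bounding the sum over annuli of the local pieces. Without the dyadic comparability \eqref{consondyad}, the sum $\sum_j$ over scales below $|x|$ is not geometrically convergent. I would try to control it by $\Vert f\Vert_{L^{p_0}\log L(w)}$ via a Hölder/Young inequality in the Orlicz scale, accepting the $w(B_R)$ loss.
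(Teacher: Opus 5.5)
There is a genuine gap, and it sits exactly where the content of the theorem lies.

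Your treatment of the tail is fine in substance: on $I_j$ one has $|Tf-T(f\chi_{I_j^*})|\lesssim\HH(|f|)$, and $\UU\HH(|f|)\lesssim M\HH(|f|)$. But your explanation of where $L^{p_0}\log L$ enters is wrong. $M$ is bounded on $L^{p_0}(w)$ for $w\in A_1\subset A_{p_0}$, hence so is $M^2$. The tail is therefore controlled by $\Vert f\Vert_{L^{p_0}(w)}$ uniformly in $R$, with no logarithm and no $w(B_R)$; this is exactly how the paper handles $S_T^{R,\mathrm{out}}f$. Two smaller points: the logarithmic kernel you describe comes from $\UU\UU$, whereas $\UU\UU^*$ is a constant multiple of $\UU+\UU^*$; and the duality set-up of your first paragraph is never used.

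So everything rests on the local pieces, and there the proposal stops. H\"older over the whole ball $B(0,|x|)$ gives $|x|^{-n/p_0}\Vert f\chi_{I_j^*}\Vert_{L^{p_0}}$. After your $A_1$ conversions, the contribution on $I_k$ is of size $\sum_{j\le k}\Vert\chi_{I_j^*}f\Vert_{L^{p_0}(w)}$, with no decay in $k-j$, so the sum over $k$ does not close. The suggested remedy, an unspecified H\"older/Young inequality in the Orlicz scale, is not an argument. Nothing in the proposal actually produces $\Vert f\Vert_{L^{p_0}\log L(w)}$ or $\max\{1,w(B_R)^{1/p_0}\}$. Also, the restriction $2^j\lesssim|x|$ is valid only for $T_{\mathrm{in}}f$, which equals $T(f\chi_{I_j^*})$ only on $I_j$. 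It is not valid if you apply $\UU$ to the whole function $T(f\chi_{I_j^*})$.

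In the paper, H\"older is applied inside the average, $S_Tf\le(\UU(|Tf|^{p_0}))^{1/p_0}$. Fubini then moves the weight onto $\Gamma_Rw(y)=\int_{|y|\le|x|\le R}w(x)|x|^{-n}\,dx$, which is radially nonincreasing and satisfies $\esssup_{I_k^*}\Gamma_Rw\lesssim\essinf_{I_k^*}(Mw+\Gamma_Rw)$. This lets the unweighted bound be used annulus by annulus, leaving $\int_{B_{4R}}|f|^{p_0}(Mw+\Gamma_{4R}w)$. The $Mw$ part is $\lesssim\Vert f\Vert_{L^{p_0}(w)}^{p_0}$. The $\Gamma_{4R}w$ part equals $\int_{B_{4R}}\UU(\chi_{B_{4R}}|f|^{p_0})\,w\le\int_{B_{4R}}M(|f|^{p_0})\,w$. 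Lemma \ref{utfl1uest} turns this into $\max\{1,w(B_{4R})\}\Vert|f|^{p_0}\Vert_{L\log L(w)}$, which is the origin of both the Orlicz norm and the factor in $R$.

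Your per-annulus idea can in fact be repaired directly; the obstacle you identify is an artifact of the lossy H\"older step. Use H\"older on $I_j$ instead of $B(0,|x|)$. Then for $x\in I_k$ the $j$-th term is $\lesssim 2^{-kn}2^{jn/p_0'}\Vert\chi_{I_j^*}f\Vert_{L^{p_0}}$, $j\le k$. Combine this with $\essinf_{I_j^*}w\gtrsim 2^{-jn}w(B_{2^{j+1}})$ and $w(I_k)\lesssim 2^{(k-j)n}w(B_{2^{j+1}})$, both consequences of $A_1$. This gives $\Vert\chi_{I_k}\UU(|T_{\mathrm{in}}f|)\Vert_{L^{p_0}(w)}\lesssim\sum_{j\le k}2^{-(k-j)n/p_0'}\Vert\chi_{I_j^*}f\Vert_{L^{p_0}(w)}$, which sums by Young's inequality to $\lesssim\Vert f\Vert_{L^{p_0}(w)}$. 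Together with the tail, $\Vert S_Tf\Vert_{L^{p_0}(w)}\lesssim\Vert f\Vert_{L^{p_0}(w)}\le\Vert f\Vert_{L^{p_0}\log L(w)}$, which implies the stated estimate. But this step is precisely what is missing from your proposal.
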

 Here and below, the weighted Orlicz space $L^{p_0}\log L(w)$ is understood in the sense defined in Subsection \ref{lplogldef}.

\begin{theorem}\label{mainlinearpq}
Let $T$ be given by \eqref{operatorT}, and assume that \eqref{sizecon} and \eqref{LpboundT} hold for some $1<p_0<\infty$. 
Then, for every $p_0<p<\infty$,
$$T: L^p(w)\longrightarrow \mathcal C_p(w), \qquad \forall w\in A_{p/p_0}.$$
\end{theorem}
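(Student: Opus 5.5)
The plan is to pass from $|Tf|$ to $|Tf|^{p_0}$, dualize against $L^{s'}(w)$ with $s:=p/p_0>1$, and then use the $L^{p_0}$ bound \eqref{LpboundT} near the origin and the size condition \eqref{sizecon} away from it. By Jensen's inequality (note that $\UU$ is an average up to a constant), $\UU(Tf)\lesssim \big(\UU(|Tf|^{p_0})\big)^{1/p_0}$. Hence
$$\Vert \UU(Tf)\Vert_{L^p(w)}^{p_0}\lesssim \Vert \UU(|Tf|^{p_0})\Vert_{L^{s}(w)},$$
and it suffices to bound $\int \UU(|Tf|^{p_0})\,h\,w$ for $0\le h$ with $\Vert h\Vert_{L^{s'}(w)}=1$. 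By Fubini this integral equals
$$\int_{\bbrn}|Tf(y)|^{p_0}V(y)\,dy,\qquad V(y):=\int_{|x|\ge|y|}h(x)w(x)|x|^{-n}\,dx .$$
Writing $V$ as a superposition of the indicators $\chi_{B_{|x|}}$, we get
$$\int |Tf|^{p_0}V=\int h(x)w(x)|x|^{-n}\Big(\int_{B_{|x|}}|Tf|^{p_0}\Big)dx .$$
So the key step is a local estimate for $\int_{B_r}|Tf|^{p_0}$.

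Fix $r>0$ and split $f=f\chi_{B_{2r}}+f\chi_{B_{2r}^c}$. For the near part, \eqref{LpboundT} gives $\int_{B_r}|T(f\chi_{B_{2r}})|^{p_0}\lesssim\int_{B_{2r}}|f|^{p_0}$. For the far part, if $|z|<r$ and $|y|>2r$ then $|z-y|\sim|y|$, so \eqref{sizecon} gives $|T(f\chi_{B_{2r}^c})(z)|\lesssim G(r)$, where
$$G(r):=\int_{|y|>2r}|f(y)||y|^{-n}\,dy .$$
Thus
$$\int_{B_r}|Tf|^{p_0}\lesssim\int_{B_{2r}}|f|^{p_0}+r^nG(r)^{p_0}.$$
Since $G$ is nonincreasing, $r^nG(r)^{p_0}\lesssim\int_{B_r}\big(\mathcal V|f|\big)^{p_0}$, where $\mathcal V g(z):=\int_{|y|>|z|}|g(y)||y|^{-n}\,dy$ is the adjoint Hardy operator. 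Plugging both bounds back in with $r=|x|$, we obtain
$$\int|Tf|^{p_0}V\lesssim \int h\,w\,\UU(|f|^{p_0})(2\,\cdot)+\int h\,w\,\UU\big((\mathcal V|f|)^{p_0}\big).$$

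Now both terms are handled by H\"older in $L^s(w)\times L^{s'}(w)$. For the first term we use the pointwise bound $\UU(g)(2x)\lesssim Mg(x)$ and the boundedness of $M$ on $L^s(w)$, valid since $w\in A_s$. This gives the bound $\Vert |f|^{p_0}\Vert_{L^s(w)}=\Vert f\Vert_{L^p(w)}^{p_0}$. For the second term, $\UU\lesssim M$ gives the bound $\Vert (\mathcal V|f|)^{p_0}\Vert_{L^s(w)}=\Vert \mathcal V|f|\Vert_{L^p(w)}^{p_0}$. It remains to show that $\mathcal V$ is bounded on $L^p(w)$. Since $A_{s}\subset A_p$, we have $w\in A_p$, hence $\sigma=w^{1-p'}\in A_{p'}$. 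Then $\UU\lesssim M$ is bounded on $L^{p'}(\sigma)$, and by duality its adjoint $\mathcal V$ is bounded on $L^p(w)$. Taking the supremum over $h$ yields $\Vert \UU(Tf)\Vert_{L^p(w)}\lesssim\Vert f\Vert_{L^p(w)}$.

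I expect the main obstacle to be the far part, i.e.\ converting $r^nG(r)^{p_0}$ into a quantity that can be paired with $hw$. The monotonicity of $G$, which lets us replace the point value by an integral over $B_r$ of $(\mathcal V|f|)^{p_0}$, together with the duality argument for $\mathcal V$, is what makes this work. I would also check carefully that all Fubini and layer-cake manipulations are justified for $f$ in a dense class, and then pass to the limit.
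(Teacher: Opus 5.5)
Your proof is correct, and it follows a genuinely different and more elementary route than the paper's. The paper tests $|S_Tf|^{p_0}$ against $u\in A_1$ and moves the weight onto $Tf$ by Fubini to get $\int|Tf|^{p_0}\,\HH u_N$. It then needs four further ingredients: the truncation $u_N$; Lemmas \ref{ca1} and \ref{rdrd}, to know that $\HH u_N$ is a radially nonincreasing $A_1$ weight satisfying \eqref{consondyad}; Theorem \ref{SoWetheorem} applied to that auxiliary weight; and finally Rubio de Francia extrapolation (Lemma \ref{rfrf}) together with \eqref{hhlpwest}.

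You instead do the near/far splitting directly on the origin-centred balls over which $\UU$ averages. Since you apply \eqref{LpboundT} before any weight enters, you need no annular comparability, no Soria--Weiss theorem and no extrapolation.

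Your argument is even simpler than you present it. The duality step is a round trip, since the second Fubini just undoes the first. The far part is also not an obstacle: $\{|y|>2|x|\}\subset\{|y|>|x|\}$ gives $G(|x|)\le\mathcal V|f|(x)$ pointwise. So everything reduces to the pointwise bound
$$\UU\big(|Tf|^{p_0}\big)(x)\lesssim M\big(|f|^{p_0}\big)(x)+\big(\mathcal V|f|(x)\big)^{p_0},$$
followed by taking $L^{s}(w)$ norms. By Tonelli no density argument is needed. The fact that $f\chi_{B_{2r}}\in L^{p_0}$, which you need in order to apply \eqref{LpboundT}, follows from H\"older because $w^{-1/(s-1)}\in L^1_{\mathrm{loc}}$ for $w\in A_s$.

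What your route buys is this. It is short and quantitative. It works for any weight for which $g\mapsto\UU g(2\,\cdot)$ is bounded on $L^{p/p_0}(w)$ and $\mathcal V$ (the paper's $\VV$) is bounded on $L^p(w)$; these are Hardy-type conditions strictly weaker than $w\in A_{p/p_0}$. The same splitting also yields the multilinear Theorem \ref{thm:bilinear-global} directly, without multilinear extrapolation.

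What the paper's route buys is structural uniformity. It isolates the $A_1$ inequality \eqref{weightedesta1} and presents the Ces\`aro estimate as an instance of the Stein--Soria--Weiss mechanism applied to the auxiliary weights $\HH u$. The authors reuse this template verbatim in the multilinear setting.
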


As an immediate consequence, we obtain the following result.
\begin{theorem}\label{mainlinearpq2}
Let $T$ be given by \eqref{operatorT}, and assume that \eqref{sizecon} holds.  If \eqref{LpboundT} holds for every $1<p_0<\infty$, then, for any $1<p<\infty$
$$T: L^p(w)\longrightarrow \mathcal C_p(w), \qquad \forall w\in A_p. $$
\end{theorem}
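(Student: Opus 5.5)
Theorem \ref{mainlinearpq2} follows from Theorem \ref{mainlinearpq} by choosing the exponent $p_0$ suitably. Fix $1<p<\infty$ and $w\in A_p$. We need some $p_0\in(1,p)$ such that $w\in A_{p/p_0}$, because Theorem \ref{mainlinearpq} then applies with that $p_0$ and gives $T:L^p(w)\to\mathcal C_p(w)$.

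We first note that $p_0<p$ is needed twice: for $p/p_0>1$, so that the class $A_{p/p_0}$ is defined, and for the hypothesis $p_0<p<\infty$ of Theorem \ref{mainlinearpq}. The key tool is the openness property of Muckenhoupt classes, a consequence of the reverse H\"older inequality. If $w\in A_p$ with $p>1$, there is $\varepsilon>0$, depending on $w$, such that $w\in A_{p-\varepsilon}$. We may shrink $\varepsilon$ so that $p-\varepsilon>1$.

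Now define $p_0:=p/(p-\varepsilon)$. Since $1<p-\varepsilon<p$, we get $1<p_0<p$, and $p/p_0=p-\varepsilon$, so $w\in A_{p/p_0}$. By hypothesis \eqref{LpboundT} holds for every exponent in $(1,\infty)$, in particular for this $p_0$, and \eqref{sizecon} is assumed. Theorem \ref{mainlinearpq} therefore yields the claim.

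The argument is essentially formal. The only nontrivial input is the openness of $A_p$ for $p>1$, a standard fact we quote (via reverse H\"older) rather than reprove. The one check that needs care is that the chosen $p_0$ lies strictly between $1$ and $p$, which the construction above guarantees.
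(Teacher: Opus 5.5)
Your proposal is correct and is essentially the paper's proof. The paper likewise uses openness (Lemma \ref{weightopen}) to get $w\in A_t$ for some $1<t<p$, sets $p_0=p/t$ (your $t=p-\varepsilon$), and then applies Theorem \ref{mainlinearpq}.
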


\medskip

\subsection{Multilinear extensions}
We now turn our attention to multilinear analogues. Let
\begin{equation}\label{bilinearTdef}
T\big(f_1,\dots,f_m\big)(x) := \int_{(\bbrn)^m} K(x,y_1,\dots,y_m)\bigg( \prod_{j=1}^{m}f_j(y_j) \bigg)\,dy_1\cdots dy_m
\end{equation}
be a multilinear operator whose kernel satisfies
\begin{equation}\label{bilinearsizecon}
\big|K(x,y_1,\dots,y_m)\big| \le \frac{B}{(|x-y_1|+\cdots+|x-y_m|)^{mn}}
\end{equation}
for some constant $B>0$.

Our first result is the natural multilinear extension of Stein's power-weight theorem under only the multilinear size condition and a single unweighted boundedness assumption.

\begin{theorem}\label{bilinearStein}
Let $T$ be given by \eqref{bilinearTdef}, and assume that
\eqref{bilinearsizecon} holds. Let $1<p_1,\dots,p_m<\infty$, and define $p$ by
$ 1/p=1/p_1+\cdots+1/p_m.$
Assume that
\begin{equation}\label{bilinearLpbound}
T:L^{p_1}(\bbrn)\times \cdots \times L^{p_m}(\bbrn) \longrightarrow L^{p}(\bbrn).
\end{equation}
Let 
\begin{equation}\label{con:powerweirang}
-n<\alpha_j<n(p_j-1), \qquad j=1,\dots,m.
\end{equation}
Then, 
$$ T:L^{p_1}(|x|^{\alpha_1})\times \cdots\times L^{p_m}(|x|^{\alpha_m}) \longrightarrow L^{p}(|x|^{\alpha_1 p/p_1+ \cdots+\alpha_m p/p_m } ).$$
\end{theorem}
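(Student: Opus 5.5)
Our plan is to follow the Soria--Weiss strategy of decomposing the integration region into a ``local'' part, where the power weights are essentially constant, and ``global'' parts, where the size condition \eqref{bilinearsizecon} lets us dominate $T$ by Hardy-type averaging operators. Write $w_j(x)=|x|^{\alpha_j}$ and $v(x)=\prod_j w_j(x)^{p/p_j}$. For $x\in I_k$ we split each $f_j=f_j\chi_{I_k^*}+f_j\chi_{(I_k^*)^c}$. Expanding by multilinearity gives $T(f_1,\dots,f_m)(x)=T(f_1\chi_{I_k^*},\dots,f_m\chi_{I_k^*})(x)+\sum_{\emptyset\ne J}T(\dots)(x)$. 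Here $J\subseteq\{1,\dots,m\}$ is the set of indices for which the far piece $f_j\chi_{(I_k^*)^c}$ is used.

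\textbf{Local term.} On $I_k$ and $I_k^*$ each $w_j$ is comparable to $2^{k\alpha_j}$, so $v\sim 2^{kp\sum_j\alpha_j/p_j}$ on $I_k$. Applying \eqref{bilinearLpbound} gives
$$\|\chi_{I_k}T(f_1\chi_{I_k^*},\dots)\|_{L^p(v)}\lesssim \prod_j 2^{k\alpha_j/p_j}\|f_j\chi_{I_k^*}\|_{L^{p_j}}\sim\prod_j\|f_j\chi_{I_k^*}\|_{L^{p_j}(w_j)}.$$
Raising to the power $p$ and summing over $k$, we use H\"older for sums with exponents $p_j/p$ (since $\sum_j p/p_j=1$) together with the bounded overlap of the $I_k^*$. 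This yields $\lesssim\prod_j\|f_j\|_{L^{p_j}(w_j)}$. This step works for any $0<p<\infty$; if $p<1$, we use the $p$-th power sum directly.

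\textbf{Global terms.} Fix $J\neq\emptyset$. For $j\in J$ we have $|y_j|<2^{k-2}$ or $|y_j|\ge 2^{k+1}$, so $|x-y_j|\gtrsim \max(|x|,|y_j|)$. Using \eqref{bilinearsizecon}, we bound the denominator from below by $\prod_{j\in J}\max(|x|,|y_j|)^{n}\cdot\prod_{j\notin J}(|x|+|x-y_j|)^{n}$, splitting the exponent $mn$ as $n$ per variable. This is legitimate because $(\sum_i a_i)^{mn}\ge\prod_i a_i^{n}$ and $|x|\lesssim\sum_i|x-y_i|$ as soon as some $j\in J$ exists. Hence the kernel factorizes, and
$$|T_J(x)|\lesssim \prod_{j\in J}\Big(\int\frac{|f_j(y)|}{\max(|x|,|y|)^{n}}dy\Big)\prod_{j\notin J}\Big(\int_{I_k^*}\frac{|f_j(y)|}{(|x|+|x-y|)^n}dy\Big).$$
Each factor is dominated by $\mathcal{U}f_j(x)+\mathcal{H}^*f_j(x)$, where $\mathcal H^* f(x)=\int_{|y|>|x|}|f(y)||y|^{-n}dy$ is the adjoint Hardy operator. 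For $j\notin J$ we use $|x|+|x-y|\gtrsim |x|$ on $I_k^*$, which gives the bound $\mathcal U f_j(x)$ up to constants. Summing over $k$ (the bounds are independent of $k$), we get $|T(f_1,\dots,f_m)-\text{local}|\lesssim\prod_j(\mathcal U|f_j|+\mathcal H^*|f_j|)$ pointwise.

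\textbf{Conclusion and main obstacle.} By H\"older with exponents $p_j/p$,
$$\Big\|\prod_j g_j\Big\|_{L^p(v)}\le\prod_j\|g_j\|_{L^{p_j}(w_j)}.$$
It therefore suffices to know that $\mathcal U$ and $\mathcal H^*$ are bounded on $L^{p_j}(|x|^{\alpha_j})$. This is the classical weighted Hardy inequality for power weights, and its range is exactly $-n<\alpha_j<n(p_j-1)$: the condition $\alpha_j<n(p_j-1)$ is needed for $\mathcal U$ and $\alpha_j>-n$ for $\mathcal H^*$. This is where \eqref{con:powerweirang} enters. We expect the main obstacle to be the global-term kernel estimate, namely justifying the factorization of $(\sum|x-y_i|)^{-mn}$ into one-variable pieces each of exact homogeneity $-n$, especially for indices $j\notin J$ with $y_j$ near $x$. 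The fallback there is to use the lower bound $|x|$ generated by a far variable to absorb those near variables, as described above.
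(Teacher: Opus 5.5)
Your argument is correct and is essentially the paper's proof. The paper obtains this theorem as the power-weight case $q_j=p_j$ of Theorem \ref{bilinearHerz}. That proof uses exactly your split: a diagonal piece $\chi_{I_k}T(\chi_{I_k^*}f_1,\dots,\chi_{I_k^*}f_m)$, handled by \eqref{bilinearLpbound} and the constancy of the weights on annuli, and an off-diagonal piece dominated pointwise by $\prod_j\HH(|f_j|)$. It then concludes with H\"older and the weighted bound \eqref{hhlpwest} for $\HH=\UU+\VV$, which for power weights is your classical Hardy inequality. One small slip: for $j\notin J$ the integral over $I_k^*$ reaches $|y|<4|x|$, so it is bounded by $\UU|f_j|(x)+4^n\VV|f_j|(x)$ rather than by $\UU|f_j|(x)$ alone. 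More simply, use $|x|+|x-y|\ge\max\{|x|,|y|\}$ to get $\HH(|f_j|)(x)$ directly, as the paper effectively does.
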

Theorem \ref{bilinearStein} is recovered as the power-weight special case of the weighted Lebesgue estimate \eqref{eq:multiweightest} below, which itself follows from the weighted Herz-space estimate by taking $q_j=p_j$, $j=1,\ldots,m$. We now establish this weighted Herz-space estimate, which is the multilinear counterpart of Theorem \ref{Herzextrapol}.
\begin{theorem}\label{bilinearHerz}
Let $T$ be given by \eqref{bilinearTdef}, and assume that
\eqref{bilinearsizecon} holds. Let $1<p_1,\dots,p_m<\infty$, and define $p$ by
$ 1/p=1/p_1+\cdots+1/p_m.$
Assume that $T$ satisfies \eqref{bilinearLpbound}.
Let $w_j\in A_{p_j}$, $j=1,\dots,m$, satisfy
\begin{equation}\label{biconsondyad}
\esssup_{I_k} w_j \le C\,\essinf_{I_k^*} w_j, \qquad \quad k\in\bbz.
\end{equation}
Then, for every $0<q_1,\dots,q_m\le \infty$ and $0<q\le \infty$ with $1/q=1/q_1+\cdots+1/q_m,$
we have
\begin{equation}\label{tkp1q1kp2q2bdest}
 T:\dot K_{p_1}^{0,q_1}(w_1) \times \cdots\times \dot K_{p_m}^{0,q_m}(w_m) \longrightarrow \dot K_p^{0,q}(\nu)
 \end{equation}
where $\nu:=\prod_{j=1}^{m}w_j^{p/p_j}$.
\end{theorem}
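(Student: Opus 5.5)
We decompose each $f_j=\sum_{k_j\in\bbz} f_j\chi_{I_{k_j}}=:\sum_{k_j} f_{j,k_j}$ and, for a fixed output annulus $I_k$, split the multi-index $(k_1,\dots,k_m)$ into a \emph{local} part, where $|k_j-k|\le 1$ for every $j$, and a \emph{nonlocal} part, where $|k_j-k|\ge 2$ for at least one $j$. The local part will be handled by the unweighted bound \eqref{bilinearLpbound} together with \eqref{biconsondyad}, and the nonlocal part by the size condition \eqref{bilinearsizecon} together with the $A_{p_j}$ property of $w_j$. At the end we take an $\ell^{q}$ sum over $k$ with Hölder's inequality $\ell^{q_1}\times\cdots\times\ell^{q_m}\to\ell^q$.

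\emph{Local part.} For $x\in I_k$ and $|k_j-k|\le1$, the supports lie in $I_k^*$. By \eqref{biconsondyad}, $w_j$ is essentially equal to a constant $c_{j,k}$ on $I_k^*$: indeed $I_{k\pm1}\subset I_k^*$ and the essential supremum over each $I_{k'}$ is controlled by the essential infimum over $I_{k'}^*\supset I_k$. Hence $\nu\sim\prod_j c_{j,k}^{p/p_j}$ on $I_k$. Then
\[\|\chi_{I_k}T(f_{1,k_1},\dots,f_{m,k_m})\|_{L^p(\nu)}\lesssim \prod_j c_{j,k}^{1/p_j}\|f_{j,k_j}\|_{L^{p_j}}\sim\prod_j\|f_{j,k_j}\|_{L^{p_j}(w_j)},\]
and there are at most $3^m$ such terms. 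This gives a bound by $\prod_j\big(\sum_{|l|\le1}\|f_j\chi_{I_{k+l}}\|_{L^{p_j}(w_j)}\big)$, which is fine in $\ell^q$ after Hölder. When $p<1$ we use the $p$-triangle inequality here.

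\emph{Nonlocal part.} If some $|k_j-k|\ge2$, then for $x\in I_k$ we have $\sum_i|x-y_i|\gtrsim 2^{\max(k,\max_i k_i)}=:2^{M}$. By \eqref{bilinearsizecon}, the term is pointwise bounded by $2^{-Mmn}\prod_j\|f_{j,k_j}\|_{L^1}$. Write $\|f_{j,k_j}\|_{L^1}\le \|f_{j,k_j}\|_{L^{p_j}(w_j)}\,\sigma_j(I_{k_j})^{1/p_j'}$ with $\sigma_j=w_j^{1-p_j'}$. Then the $L^p(\nu)$ norm on $I_k$ is bounded by
\[2^{-Mmn}\,\nu(I_k)^{1/p}\prod_j\|f_{j,k_j}\|_{L^{p_j}(w_j)}\sigma_j(I_{k_j})^{1/p_j'},\]
and Hölder gives $\nu(I_k)^{1/p}\le\prod_j w_j(I_k)^{1/p_j}$. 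The expression therefore factors as
\[\prod_j \Big(2^{-Mn}w_j(I_k)^{1/p_j}\sigma_j(I_{k_j})^{1/p_j'}\Big)\|f_{j,k_j}\|_{L^{p_j}(w_j)}.\]
For each $j$, set $N=\max(k,k_j)$. Then $2^{-Mn}\le2^{-Nn}$, and both annuli lie in $B_{2^N}$, so the $A_{p_j}$ condition gives $2^{-Nn}w_j(I_k)^{1/p_j}\sigma_j(I_{k_j})^{1/p_j'}\lesssim \big(w_j(I_k)/w_j(B_{2^N})\big)^{1/p_j}\big(\sigma_j(I_{k_j})/\sigma_j(B_{2^N})\big)^{1/p_j'}$. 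Since $w_j,\sigma_j$ are doubling and satisfy the reverse doubling inequality $\mu(B_{2^{l}})\le \theta\mu(B_{2^{l+1}})$ with $\theta<1$, each factor is $\lesssim 2^{-\delta|k-k_j|}$ for some $\delta>0$. Here it matters that $I_k\subset B_{2^k}$ with $k\le N$. Summing over all $(k_1,\dots,k_m)$, not only the nonlocal ones (the extra terms are dominated anyway), we get
\[\|\chi_{I_k}T_{\rm nonloc}\|_{L^p(\nu)}\lesssim\prod_j\Big(\sum_{k_j}2^{-\delta|k-k_j|}\|f_{j,k_j}\|_{L^{p_j}(w_j)}\Big),\]
provided we may sum in $L^p$. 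If $p<1$, we instead use the $p$-triangle inequality with $\sum(\cdot)^p$; the geometric decay survives because $\delta p>0$.

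\emph{Conclusion and main obstacle.} Each factor $a_{j,k}=\sum_{k_j}2^{-\delta|k-k_j|}b_{j,k_j}$ is a discrete convolution with a summable kernel. It is therefore bounded on $\ell^{q_j}$ for $q_j\ge1$, and for $q_j<1$ one uses $(\sum c_i)^{q_j}\le\sum c_i^{q_j}$ with kernel $2^{-\delta q_j|\cdot|}$. Hölder in $k$ then closes the estimate. I expect the main obstacle to be the nonlocal estimate: the decay $2^{-\delta|k-k_j|}$ must be extracted from the $A_{p_j}$ condition alone, via reverse doubling for both $w_j$ and $\sigma_j$, while keeping the mixed scale $M$ properly shared among the $m$ factors. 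That reverse doubling inequality on dyadic balls centered at $0$ is the lemma I would state and prove first.
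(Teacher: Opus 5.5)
Your proposal is correct and is essentially the paper's proof. You use the same splitting on $I_k$ into the part with all inputs localized to $I_k^*$, handled by \eqref{bilinearLpbound} and \eqref{biconsondyad}, and the off-diagonal remainder. For the remainder, your bound $2^{-n\max\{k,k_j\}}w_j(I_k)^{1/p_j}\sigma_j(I_{k_j})^{1/p_j'}\lesssim 2^{-\delta|k-k_j|}$ is exactly the computation in the proof of Proposition~\ref{keylemHbdWi}. You inline it at the level of annular pieces, whereas the paper first dominates $T_{\mathrm{out}}$ pointwise by $\prod_{j}\HH(|f_j|)$ and then applies H\"older's inequality in the Herz norms. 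One small remark: the $p$-triangle inequality is not needed in the nonlocal part when $p<1$. Each piece is bounded by a constant on $I_k$, so you can sum pointwise before taking the $L^p(\nu)$ norm.
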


Setting $q_j=p_j$, $j=1,\dots,m$, in Theorem \ref{bilinearHerz}, we have $q=p$. 
Since
$$\dot{K}_s^{0,s}(w)=L^s(w), \qquad 0<s<\infty,$$
with equality of quasi-norms, the conclusion \eqref{tkp1q1kp2q2bdest} becomes
\begin{equation}\label{eq:multiweightest}
T:L^{p_1}(w_1)\times\cdots\times L^{p_m}(w_m)
\longrightarrow L^p(\nu).
\end{equation}
This strong-type estimate is obtained under the assumption $1<p_j<\infty$, $j=1,\dots,m$, and hence does not cover endpoint cases in which some $p_j$ is equal to $1$.

Theorem \ref{bilinearStein} is now obtained by choosing
$$w_j(x)=|x|^{\alpha_j}, \qquad j=1,\dots,m.$$
The condition \eqref{con:powerweirang} is precisely equivalent to $w_j\in A_{p_j}$ for each $j=1,\dots,m$. 
Moreover, power weights satisfy the dyadic annular comparability condition \eqref{biconsondyad}.
Thus \eqref{eq:multiweightest}  implies Theorem \ref{bilinearStein},  since
$$\nu =\prod_{j=1}^m |x|^{\alpha_jp/p_j} = |x|^{\alpha_1p/p_1+\cdots+\alpha_mp/p_m}.$$

As another consequence of Theorem \ref{bilinearHerz}, we obtain the
following unweighted Herz-space estimate.
\begin{theorem}\label{cor:bilinearHerz}
Let $T$ be given by \eqref{bilinearTdef}, and assume that
\eqref{bilinearsizecon} holds. 
Let $1<p_1,\dots,p_m<\infty$, and define $p$ by $ 1/p=1/p_1+\cdots+1/p_m.$
Assume that $T$ satisfies \eqref{bilinearLpbound}. 
Let $0<q_1,\dots,q_m\le \infty$ and $0<q\le \infty$ with $1/q=1/q_1+\cdots+1/q_m$.
If
\begin{equation}\label{bialphacond}
-\frac{n}{p_j}<\alpha_j<\frac{n}{p_j'}, \qquad j=1,\dots,m,
\end{equation}
then
$$T: \dot K_{p_1}^{\alpha_1,q_1}(\bbrn) \times\cdots\times \dot K_{p_m}^{\alpha_m,q_m}(\bbrn) \longrightarrow \dot K_p^{\alpha_1+\cdots+\alpha_m,q}(\bbrn).$$
\end{theorem}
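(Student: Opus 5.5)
The plan is to deduce Theorem \ref{cor:bilinearHerz} from Theorem \ref{bilinearHerz}, exactly as Theorem \ref{cor:Herz-power} was deduced from Theorem \ref{Herzextrapol}. For each $j$ we take the power weight
$$w_j(x)=|x|^{p_j\alpha_j}, \qquad j=1,\dots,m.$$
Multiplying \eqref{bialphacond} by $p_j$ shows it is equivalent to $-n<p_j\alpha_j<n(p_j-1)$. This is precisely the range in which $|x|^{p_j\alpha_j}\in A_{p_j}$.

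The dyadic comparability condition \eqref{biconsondyad} holds for every power weight. Indeed, on $I_k^*$ we have $2^{k-2}\le|x|<2^{k+1}$, so $w_j$ is comparable to $2^{kp_j\alpha_j}$ on both $I_k$ and $I_k^*$, with constants depending only on $p_j\alpha_j$. Hence all the hypotheses of Theorem \ref{bilinearHerz} are met, and we obtain
$$T:\dot K_{p_1}^{0,q_1}(w_1)\times\cdots\times\dot K_{p_m}^{0,q_m}(w_m)\longrightarrow \dot K_p^{0,q}(\nu), \qquad \nu=\prod_{j=1}^m w_j^{p/p_j}=|x|^{p(\alpha_1+\cdots+\alpha_m)}.$$

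It remains to identify the spaces. Since $|x|\sim 2^k$ on $I_k$, we have
$$\Vert\chi_{I_k}f\Vert_{L^{p_j}(w_j)}\sim 2^{k\alpha_j}\Vert\chi_{I_k}f\Vert_{L^{p_j}(\bbrn)}.$$
Summing in $\ell^{q_j}$ gives $\Vert f\Vert_{\dot K^{0,q_j}_{p_j}(w_j)}\sim\Vert f\Vert_{\dot K^{\alpha_j,q_j}_{p_j}(\bbrn)}$. In the same way,
$$\Vert\chi_{I_k}g\Vert_{L^p(\nu)}\sim 2^{k(\alpha_1+\cdots+\alpha_m)}\Vert\chi_{I_k}g\Vert_{L^p(\bbrn)},$$
so $\Vert g\Vert_{\dot K^{0,q}_p(\nu)}\sim\Vert g\Vert_{\dot K_p^{\alpha_1+\cdots+\alpha_m,q}(\bbrn)}$. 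These equivalences hold also for $p<1$ and for $q=\infty$, since only the pointwise comparability $\nu\sim 2^{kp\sum_j\alpha_j}$ on each annulus is used. Substituting them into the weighted estimate yields the claim.

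There is no real obstacle: the only points to check are the $A_{p_j}$ range of power weights, which is standard, and the annulus-wise comparability. All the substantive work lies in Theorem \ref{bilinearHerz}, which we are allowed to assume.
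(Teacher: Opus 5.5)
Your proposal is correct and follows the paper's argument: you specialize Theorem \ref{bilinearHerz} to the power weights $w_j(x)=|x|^{p_j\alpha_j}$, which lie in $A_{p_j}$ exactly under \eqref{bialphacond} and satisfy \eqref{biconsondyad}. You then identify $\nu=|x|^{p(\alpha_1+\cdots+\alpha_m)}$ and the weighted $\dot K^{0,q}$ quasi-norms with the unweighted $\dot K^{\alpha,q}$ quasi-norms annulus by annulus, as in the deduction of Theorem \ref{cor:Herz-power}.
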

Indeed, \eqref{bialphacond} implies
$$ w_j(x):=|x|^{p_j\alpha_j}\in A_{p_j}, \qquad j=1,\dots,m. $$
Moreover, $$ \nu(x)=\prod_{j=1}^{m}w_j(x)^{p/p_j}=|x|^{p(\alpha_1+\cdots+\alpha_m)},$$
Thus, as in the proof of Theorem \ref{cor:Herz-power},
$$ \Vert f_j\Vert_ {\dot K_{p_j}^{0,q_j}(w_j)} \sim \Vert f_j\Vert_ {\dot K_{p_j}^{\alpha_j,q_j}(\bbrn)}, \qquad j=1,\dots,m,$$
and
$$ \big\Vert T(f_1,\dots,f_m)\big\Vert_ {\dot K_p^{0,q}(\nu)} \sim \big\Vert T(f_1,\dots,f_m)\big\Vert_ {\dot K_p^{\alpha_1+\cdots+\alpha_m,q}(\bbrn)}. $$
Now the conclusion follows from Theorem \ref{bilinearHerz}.

We also prove the following weak-type counterpart of \eqref{eq:multiweightest}, which includes endpoint cases where some of the exponents $p_j$ are equal to $1$.
\begin{theorem}\label{bilinearweakthm}
Let $T$ be given by \eqref{bilinearTdef}, and assume that \eqref{bilinearsizecon} holds. 
Let $1\le p_1,\dots,p_m<\infty$, and define $p$ by $1/p=1/p_1+\cdots+1/p_m.$
Assume that
\begin{equation}\label{biweakboundcon}
T:L^{p_1}(\bbrn)\times \cdots\times  L^{p_m}(\bbrn) \longrightarrow L^{p,\infty}(\bbrn).
\end{equation}
If $w_j\in A_{p_j}$, $j=1,\dots,m$, satisfy \eqref{biconsondyad}, 
then
$$ T:L^{p_1}(w_1)\times \cdots\times L^{p_m}(w_m) \longrightarrow L^{p,\infty}(\nu) $$
where $\nu=\prod_{j=1}^{m}w_j^{p/p_j}.$
\end{theorem}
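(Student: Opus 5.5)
We follow the Soria--Weiss scheme: split $T$ into a local part and a global part. Write each $f_j=\sum_{k_j\in\bbz} f_j\chi_{I_{k_j}}$, fix an output annulus $I_k$, and split the $m$-tuple $(k_1,\dots,k_m)$ into two families. The \emph{local} family has all $|k_j-k|\le 1$. The \emph{global} family has at least one $j$ with $|k_j-k|\ge 2$. For the global part we use only the size condition \eqref{bilinearsizecon}. If $x\in I_k$, $y_j\in I_{k_j}$ and $|k_j-k|\ge2$, then $|x-y_j|\gtrsim 2^{\max(k,k_j)}$. This gives pointwise domination by a positive operator of Hardy type. Writing $j_0$ for an index maximizing $k_j$, we get roughly
$$|T_{\rm glob}(f_1,\dots,f_m)(x)|\lesssim \prod_{j=1}^m \mathcal H f_j(x)+\sum_{\text{mixed}}\cdots,$$
where $\mathcal H$ is a combination of the Hardy operator $\UU$ and its adjoint $\VV g(x)=\int_{|y|>|x|}|g(y)|\,|y|^{-n}dy$. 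More precisely, the integral over $y_j$ with $|y_j|\le 2^{k-2}$ produces $\UU f_j(x)$. When some $|y_{j_0}|\ge 2^{k+1}$, the bound $(\sum_i|x-y_i|)^{-mn}\le |y_{j_0}|^{-n}\prod_{i\ne j_0}(\dots)$ has to be distributed. For each $i$ we use $(\sum|x-y_l|)^{-n}\lesssim \min(|x|^{-n},|y_i|^{-n})$ on the relevant region, which is legitimate because the full sum dominates both $|x|$ (up to the near-diagonal exclusion) and each $|y_i|$. Then each factor is bounded by $M f_i(x)$ or $\UU f_i(x)+\VV f_i(x)$. It is well known that $\UU,\VV$, and $M$ are bounded on $L^{p_j}(w_j)$ for $w_j\in A_{p_j}$, for $1<p_j<\infty$, and $\UU$ is weak-type at $p_j=1$ for $A_1$. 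We then apply Hölder's inequality in the weak-type form $\|\prod g_j\|_{L^{p,\infty}(\nu)}\lesssim \prod \|g_j\|_{L^{p_j,\infty}(w_j)}$, where $\nu=\prod w_j^{p/p_j}$. This bounds the global part.

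For the local part we use the unweighted hypothesis \eqref{biweakboundcon} together with \eqref{biconsondyad}. On $I_k$, and on $I_{k_j}\subset I_k^*$ for each local $k_j$, every $w_j$ is comparable to the constant $c_{j,k}=\essinf_{I_k^*}w_j$, and $\nu\sim\prod_j c_{j,k}^{p/p_j}$ on $I_k$. Then
$$\nu(\{x\in I_k:|T_{\rm loc}|>\lambda\})\lesssim \prod_j c_{j,k}^{p/p_j}\,\lambda^{-p}\prod_j\|f_j\chi_{I_k^*}\|_{L^{p_j}}^{p}\lesssim \lambda^{-p}\prod_j\|f_j\chi_{I_k^*}\|_{L^{p_j}(w_j)}^{p}.$$
Here the finitely many local configurations are handled by the quasi-triangle inequality in $L^{p,\infty}$. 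Summing over $k$ and using Hölder's inequality for sums with $\sum_j p/p_j=1$ gives
$$\sum_k\prod_j\|f_j\chi_{I_k^*}\|_{L^{p_j}(w_j)}^{p}\le\prod_j\Big(\sum_k\|f_j\chi_{I_k^*}\|^{p_j}_{L^{p_j}(w_j)}\Big)^{p/p_j}\lesssim\prod_j\|f_j\|_{L^{p_j}(w_j)}^p.$$
Summing weak-type level sets over disjoint annuli is legitimate because distribution functions add over disjoint sets. This finishes the local part.

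The main obstacle is the global part in the mixed configurations, where some $y_j$ are local, some are inside, and some are outside. In these configurations the kernel decay must be split among the $m$ variables so that each factor becomes a bounded positive operator, and a local variable can only be controlled by $Mf_j$ or by an average over $I_k^*$. This step must also work at the endpoint $p_j=1$, where $\VV$ is not bounded on $L^1(w)$. There I would try to use the decay only through the variable with the largest modulus. That is, bound the kernel by $|y_{j_0}|^{-n}\prod_{i\neq j_0}(|x|+|y_i|)^{-n}$ (up to constants, using that the largest of $|x|,|y_i|$ dominates). This should produce at most one $\VV$-type factor, and only for the outermost variable. When that index has $p_{j_0}=1$, I would avoid $\VV$ by using that $|y_{j_0}|^{-n}\le 2^{-kn}$ on the annulus. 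The resulting term $2^{-kn}\|f_{j_0}\chi_{|y|>2^{k+1}}\|_{L^1}$ is then controlled via the $A_1$ condition together with \eqref{biconsondyad}. This is where the dyadic comparability is genuinely needed, not only in the local part.
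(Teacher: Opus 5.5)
Your local part is correct and matches the paper's argument. On $I_k$ the local configurations add up to $T(\chi_{I_k^*}f_1,\dots,\chi_{I_k^*}f_m)$. You then use $\esssup_{I_k}\nu\lesssim\prod_j(\essinf_{I_k^*}w_j)^{p/p_j}$, the unweighted weak bound, additivity of distribution functions over disjoint annuli, and H\"older for sums.

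\textbf{The gap is in the off-diagonal part when some $p_j=1$}, which is exactly the case this theorem is meant to include. The inequality $\Vert\prod_j g_j\Vert_{L^{p,\infty}(\nu)}\lesssim\prod_j\Vert g_j\Vert_{L^{p_j,\infty}(w_j)}$ with $\nu=\prod_j w_j^{p/p_j}$ is false for different weights, even for power weights satisfying \eqref{biconsondyad}. In $\bbr$, let $w_1=|x|^{\alpha}$ and $w_2=|x|^{-\alpha}$ with $0<\alpha<1$, so $\nu\equiv1$. Pick $E_k\subset I_k$ with $|E_k|=1$, and set $g_1=2^{-\alpha k/2}$, $g_2=2^{\alpha k/2}$ on $E_k$ for $1\le k\le N$. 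Both weak norms are $O(1)$, but $g_1g_2=\chi_{E_1\cup\cdots\cup E_N}$ has $L^{1,\infty}(\nu)$ norm $N$. An analogous construction with $w_1\equiv1$ and $w_2=|x|^{-1/2}\in A_1$ defeats the case $p_1=p_2=1$.

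When all $p_j>1$ you can fall back on strong-type H\"older, but at the endpoint nothing replaces it. Your proposed endpoint fix also fails. Bounding $|y_{j_0}|^{-n}$ by $2^{-kn}$ discards the decay, and $2^{-kn}\Vert f_{j_0}\chi_{\{|y|>2^{k+1}\}}\Vert_{L^1}$ can be infinite for $f_{j_0}\in L^1(w)$ with $w=|y|^{-\beta}\in A_1$, $0<\beta<n$; take $f_{j_0}(y)=|y|^{\beta-n}\log^{-2}|y|$ for $|y|>e$. The premise is also mistaken. $\VV$ is not bounded on $L^1(w)$, but it is of weak type $(1,1)$ for $w\in A_1$, since $w(B_R)|y|^{-n}\lesssim Mw(y)\lesssim w(y)$ for $|y|>R$. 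Finally, \eqref{biconsondyad} is not needed off the diagonal at all.

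\textbf{The missing ingredient is radial monotonicity.} If $x\in I_k$ and some $y_j\notin I_k^*$, then $\sum_j|x-y_j|\sim\max\{|x|,|y_1|,\dots,|y_m|\}$. Hence the kernel is dominated by $\prod_j\min\{|x|^{-n},|y_j|^{-n}\}$, and the whole off-diagonal part is bounded by $\prod_j\HH(|f_j|)(x)$, with no case analysis. Local variables need no $Mf_j$, since $2^{-kn}\int_{I_k^*}|f_j|\lesssim\HH(|f_j|)(x)$ for $x\in I_k$; using $Mf_j$ is what destroyed the radial structure. The product $\prod_j\HH(|f_j|)$ is radially nonincreasing, so each level set $\{\prod_j\HH(|f_j|)>\lambda\}$ is a ball $B_R$ centered at the origin. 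H\"older gives $\nu(B_R)^{1/p}\le\prod_j w_j(B_R)^{1/p_j}$. The pointwise bound $\HH f(x)\lesssim[w]_{A_p}^{1/p}w(B_R)^{-1/p}\Vert f\Vert_{L^p(w)}$ for $|x|=R$ holds for every $1\le p<\infty$ and $w\in A_p$ (Lemma~\ref{lem:weighthardyl1w}). Together these give $\lambda\,\nu(B_R)^{1/p}\lesssim\prod_j\Vert f_j\Vert_{L^{p_j}(w_j)}$, which is Proposition~\ref{prop:multiweighthardyest}. In other words, weak-type H\"older with mixed weights does hold for radially nonincreasing factors, and that is the structure your argument needed but did not use.
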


For $0<r<\infty$, define
\begin{equation*}
S_T^r\big(f_1,\dots,f_m\big)(x):= \bigg(\frac1{|x|^n} \int_{|y|\le |x|}\big|T\big(f_1,\dots,f_m\big)(y)\big|^r\,dy \bigg)^{1/r}.
\end{equation*}
Then we have the following local multilinear logarithmic estimate. 
\begin{theorem}\label{thm:bilinear-local}
Let $T$ be given by \eqref{bilinearTdef}, and assume that \eqref{bilinearsizecon} holds.
Let $1< p_1,\dots,p_m<\infty$, and define $p$ by $1/p=1/p_1+\cdots+1/p_m.$
Assume that $T$ satisfies \eqref{bilinearLpbound}. 
Let $0<r<p$. 
If $w_j\in A_1$, $j=1,\dots,m$, then
$$T:L^{p_1}\log L(w_1)\times \cdots\times L^{p_m}\log L(w_m) \longrightarrow \mathcal{C}_{p, \mathrm{loc}}^{(r)}(\nu)$$
where $\nu=\prod_{j=1}^{m}w_j^{p/p_j}.$
More precisely, for every $R>0$,
\begin{align*}
&\bigg( \int_{B_R} \big|S_T^r\big(f_1,\dots,f_m\big)(x)\big|^p \nu(x) \, dx \bigg)^{1/p} \\
&\lesssim \prod_{j=1}^m \Big( \max\big\{1,w_j(B_R)\big\}^{1/p_j} \Vert f_j\Vert_ {L^{p_j}\log L(w_j)} \Big)
\end{align*}
where the implicit constant is independent of $R$.
\end{theorem}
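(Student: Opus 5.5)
The plan is to reduce the local estimate for $S_T^r$ to a weighted $L^p$ estimate for $T$ itself, with a weight that is constant on the annuli $I_k$. For that weight the Stein--Soria--Weiss annular argument (behind Theorem \ref{bilinearHerz}) still works, even though $A_1$ weights need not satisfy \eqref{biconsondyad}. First, since $r<p$, Jensen's inequality gives
$$S_T^r(f_1,\dots,f_m)(x)^p\le \frac{1}{|x|^n}\int_{|y|\le|x|}|T(f_1,\dots,f_m)(y)|^p\,dy .$$
By Fubini,
$$\int_{B_R}|S_T^r(\vec f)|^p\nu\,dx\le \int_{B_R}|T(\vec f)(y)|^p\,W_R(y)\,dy,\qquad W_R(y):=\int_{|y|\le|x|<R}\frac{\nu(x)}{|x|^n}\,dx .$$
Since $\nu=\prod_j w_j^{p/p_j}$ with $w_j\in A_1$, Hölder gives $\nu\in A_1$. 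Splitting $\{|y|\le|x|<R\}$ into annuli, for $y\in I_\ell$ we get
$$W_R(y)\lesssim \sum_{\ell\le k\le \log_2 R+1} a_k,\qquad a_k:=2^{-kn}\nu(B_{2^k}).$$
Thus $W_R$ is essentially constant on each $I_\ell$, and it is dominated by a weight built from ball averages. Also, by Hölder, $a_k\le\prod_j\big(2^{-kn}w_j(B_{2^k})\big)^{p/p_j}$, and the $A_1$ condition on balls centered at $0$ gives $2^{-kn}w_j(B_{2^k})\lesssim \essinf_{B_{2^k}}w_j$.

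Next I would estimate $\int|T(\vec f)|^pW_R$ as in the proof of Theorem \ref{bilinearHerz}. Fix an output annulus $I_\ell$ and decompose each $f_j=f_j\chi_{I_\ell^*}+f_j\chi_{|y|<2^{\ell-2}}+f_j\chi_{|y|\ge 2^{\ell+1}}$. For the term with all inputs local, I use the unweighted bound \eqref{bilinearLpbound}; this is legitimate because $W_R$ is constant on $I_\ell$. It yields
$$\Big(\sum_{k\ge\ell}a_k\Big)\prod_j\|f_j\chi_{I_\ell^*}\|_{L^{p_j}}^p .$$
The remaining terms have at least one far input. There the size condition \eqref{bilinearsizecon} bounds $T$ by products of Hardy-type averages $2^{-\ell n}\int_{|y|<2^{\ell}}|f_j|$ and of tails $\int_{|y|\ge2^{\ell}}|f_j||y|^{-n}$, exactly as in Stein's argument.

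It remains to convert unweighted norms on annuli back into $L^{p_j}(w_j)$ norms, and this is where the $A_1$ hypothesis and the factor $\max\{1,w_j(B_R)\}$ enter. The pointwise bound $\sum_{k\ge\ell}a_k^{p_j/p}\lesssim \sum_{\ell\le k\lesssim\log R}\essinf_{B_{2^k}}w_j$ costs a factor comparable to the number of dyadic scales between $|y|$ and $R$, that is, a weight like $w_j(y)\big(1+\log(R/|y|)\big)$. I expect absorbing this logarithm to be the main obstacle. My first attempt will split $\sum_k a_k$ into two ranges. For $2^{k}\le 1$ the terms satisfy $a_k\lesssim \prod_j(\essinf_{B_{2^k}}w_j)^{p/p_j}$ and are controlled pointwise by $\nu$, with at most $\log(1/|y|)$ of them. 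For $1\le 2^k\le R$ I use the trivial bound $a_k\le 2^{-kn}\nu(B_R)$, which sums geometrically to $\lesssim\prod_j w_j(B_R)^{p/p_j}$; this produces the factor $\max\{1,w_j(B_R)\}^{1/p_j}$. The leftover $\log(e/|y|)$ near the origin is the Orlicz loss. I would dominate $\log(e/|y|)$ by $\varepsilon^{-1}|y|^{-\varepsilon n}$ and use the fact that $w_j|y|^{-\varepsilon n}$ remains in $A_1$ (or at least in $A_{p_j}$ uniformly) for small $\varepsilon$. Then I would run a Yano-type extrapolation: decompose $f_j$ by level sets $\{2^i\le|f_j|<2^{i+1}\}$ relative to its $L^{p_j}(w_j)$ size, apply the estimate with $\varepsilon$ adapted to each piece, and sum. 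An operator constant of order $\varepsilon^{-1/p}$ converts into the $L^{p_j}\log L(w_j)$ norm. The far pieces are handled by weighted Hardy inequalities, which hold for $A_{p_j}$ weights with constants uniform in $R$.
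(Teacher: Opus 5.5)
The gap is in your treatment of the far pieces. You apply Jensen to the whole $T(f_1,\dots,f_m)$ before splitting, so every piece is afterwards measured in $L^p(W_R)$ with $W_R=\Gamma_R\nu$, which grows like $\nu(y)\log(R/|y|)$. The far pieces inherit this logarithm just as the local one does, so weighted $L^{p_j}$ Hardy inequalities cannot control them. Here is an example already for $m=1$, $w\equiv 1$, $R=1$. Take $T$ with kernel $|x|^{-n}\chi_{\{|y|<|x|/2\}}$; it satisfies \eqref{sizecon} and is $L^p$-bounded, being dominated by $\UU$. Take $f=\delta^{-n/p}\chi_{B_\delta}$. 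For $|y|\ge 4\delta$ only the inner piece is present, $Tf(y)=c\,\delta^{n/p'}|y|^{-n}$ and $W_1(y)\sim\log(1/|y|)$. Hence $\int_{4\delta\le|y|\le 8\delta}|Tf|^pW_1\,dy\gtrsim\log(1/\delta)$, while $\|f\|_{L^p}\sim 1$. This does not contradict the theorem, since $\|f\|_{L^p\log L}^p\sim\log(1/\delta)$. It does show that in your framework the far pieces also need the Orlicz norm. One way would be through $\int\HH(|f_j|)^{p_j}\Gamma_Rw_j=\int w_j\,\UU_R\big(\HH(|f_j|)^{p_j}\big)$, Lemma \ref{utfl1uest}, and a boundedness result for $\HH$ on $L^{p_j}\log L(w_j)$; your proposal supplies none of these. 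The paper avoids this by splitting $S_T^{R,r}$ into in and out parts \emph{before} changing exponents. Only the in part is raised to the exponent $p$ and transferred to the weight $\Gamma_R\nu$. The out part is dominated by $\big(M(\prod_j\HH(|f_j|)^r)\big)^{1/r}$ and bounded uniformly in $R$ by $\prod_j\|f_j\|_{L^{p_j}(w_j)}$, because $M$ is bounded on $L^{p/r}(\nu)$ with $\nu\in A_1$. That is where the hypothesis $r<p$ is genuinely used; your argument spends it on Jensen and loses that room.

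There are two smaller points on the local term, which otherwise matches the paper's in-part argument (Fubini, $a_k\le\prod_j(2^{-kn}w_j(B_{2^k}))^{p/p_j}$, the unweighted bound on $I_\ell^*$). First, your ``trivial bound'' $a_k\le 2^{-kn}\nu(B_R)$ for $1\le 2^k\le R$ discards the factors $\essinf_{B_{2^k}}w_j$ needed to convert $\|f_j\chi_{I_\ell^*}\|_{L^{p_j}}$ into $L^{p_j}(w_j)$ norms. Recovering them through the $A_1$ lower bound gives something like $w_j(B_R)/w_j(B_1)$ instead of $\max\{1,w_j(B_R)\}$; for $w\equiv\epsilon$ you get $R^n$ rather than $\max\{1,\epsilon R^n\}$. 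So your constant depends on the normalization of $w_j$, not just on $[w_j]_{A_1}$. Second, near the origin no Yano-type extrapolation is needed, since after your pointwise bound there is no operator left. The estimate $\int_{B_1}|f_j|^{p_j}w_j\log(e/|y|)\,dy\lesssim\|f_j\|_{L^{p_j}\log L(w_j)}^{p_j}$ follows from $L\log L$--$\exp L$ duality, because $w_j(B_\rho)\lesssim\rho^{n\delta}w_j(B_1)$ for $\rho\le 1$. The paper sidesteps both issues by never replacing $\Gamma_Rw_j$ with $w_j\log$. It uses $\esssup_{I_k^*}\Gamma_Rw_j\lesssim\essinf_{I_k^*}(Mw_j+\Gamma_Rw_j)$, then Fubini to rewrite $\int|f_j|^{p_j}\Gamma_{4R}w_j$ as $\int_{B_{4R}}w_j\,\UU_{4R}(\chi_{B_{4R}}|f_j|^{p_j})\le\|\chi_{B_{4R}}M(|f_j|^{p_j})\|_{L^1(w_j)}$. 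Lemma \ref{utfl1uest} then applies directly, with constants depending only on $[w_j]_{A_1}$.
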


We also obtain the following global multilinear weighted estimate.

\begin{theorem}\label{thm:bilinear-global}
Let $T$ be given by \eqref{bilinearTdef}, and assume that \eqref{bilinearsizecon} holds.
Let $1< p_1,\dots,p_m<\infty$, and define $p$ by $1/p=1/p_1+\cdots+1/p_m.$
Assume that $T$ satisfies \eqref{bilinearLpbound}. Let $0<r<p$.
Then, for every $q_j>p_j$, $j=1,\dots,m$, with $1/q=1/q_1+\cdots+1/q_m$, and for every $w_j\in A_{q_j/p_j}$, $j=1,\dots,m$, we have
$$ T: L^{q_1}(w_1) \times \cdots\times L^{q_m}(w_m) \longrightarrow \mathcal{C}_q^{(r)}(\nu)$$
where $\nu=\prod_{j=1}^{m}w_j^{q/q_j}.$

\end{theorem}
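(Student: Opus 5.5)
We have to prove Theorem \ref{thm:bilinear-global}. The plan is to reduce it to a pointwise comparison between $S_T^r$ and a maximal-type operator, and then use weighted bounds for that maximal operator together with Hölder's inequality.

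\textbf{Step 1 (pointwise bound).} Fix $x\neq 0$ and set $R=|x|$. We split each $f_j=f_j\chi_{B_{2R}}+f_j\chi_{B_{2R}^c}=:f_j^0+f_j^\infty$, and expand $T(f_1,\dots,f_m)$ into $2^m$ pieces.

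For the local piece $T(f_1^0,\dots,f_m^0)$ we apply Hölder on $B_R$ with exponent $p/r>1$, then the unweighted bound \eqref{bilinearLpbound}:
\begin{equation*}
\bigg(\frac1{R^n}\int_{B_R}|T(f_1^0,\dots,f_m^0)|^r\bigg)^{1/r}\le R^{-n/p}\big\Vert T(f_1^0,\dots,f_m^0)\big\Vert_{L^p}\lesssim \prod_{j=1}^m\bigg(\frac1{R^n}\int_{B_{2R}}|f_j|^{p_j}\bigg)^{1/p_j}.
\end{equation*}
The last factor is controlled by $\prod_j \big(M(|f_j|^{p_j})(x)\big)^{1/p_j}$, since $B_{2R}\subset B(x,3R)$.

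For each piece in which at least one $f_j^\infty$ appears, we use the size condition \eqref{bilinearsizecon}. For $|y|\le R$ and $|y_j|\ge 2R$ we have $|y-y_j|\sim |y_j|\gtrsim R$. This gives the bound
\begin{equation*}
|T(\dots)(y)|\lesssim \int \frac{\prod_j |f_j(y_j)|}{\big(\sum_j|y-y_j|\big)^{mn}}\,d\vec y ,
\end{equation*}
with the integration restricted to $\max_j|y-y_j|\gtrsim R$. Decomposing dyadically in $\sum_j|y-y_j|\sim 2^lR$, $l\ge 0$, and using $|y-x|\le 2R$, each dyadic shell contributes at most $\prod_j \frac{1}{|B(x,2^{l+2}R)|}\int_{B(x,2^{l+2}R)}|f_j|$. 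We would like to sum over $l$, but the geometric series does not converge on its own. We fix this by using the full gain: the shells with $l\ge1$ have volume about $(2^lR)^{mn}$, but one $y_j$ is restricted to $|y_j|\ge 2R$ with $|y-y_j|\sim 2^lR$. This naively gives only $\sum_l \prod_j (\text{avg})$, which again diverges.

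So instead we bound each shell term by $\prod_j \big(M(|f_j|^{s})(x)\big)^{1/s}$ for some fixed $s$, and obtain decay from the observation that, for $y$ in $B_R$, the off-diagonal operator is pointwise dominated by a Hardy-type (Hörmander-free) operator. The realistic route is as follows. The tail part is pointwise bounded, uniformly for $|y|\le R$, by $\prod_j \mathcal V f_j(x)$ plus mixed products of $\mathcal V$ and $\UU$, where
\begin{equation*}
\mathcal V g(x)=\int_{|z|>|x|}\frac{|g(z)|}{|z|^n}\,dz .
\end{equation*}
This is the standard Stein computation, since $\sum_j|y-y_j|\gtrsim \max(|y_j|,R)$ when some $|y_j|\ge2R$.

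\textbf{Step 2 (weighted bounds).} Set $g_j=|f_j|^{p_j}$. The local term is controlled by $\prod_j (Mg_j)^{1/p_j}$. We then use Hölder with exponents $q_j/q$:
\begin{equation*}
\Big\Vert \prod_j (Mg_j)^{1/p_j}\Big\Vert_{L^q(\nu)}\le \prod_j \big\Vert Mg_j\big\Vert_{L^{q_j/p_j}(w_j)}^{1/p_j}\lesssim\prod_j\Vert f_j\Vert_{L^{q_j}(w_j)}.
\end{equation*}
The last inequality holds because $w_j\in A_{q_j/p_j}$ and $q_j/p_j>1$, which is exactly why this hypothesis appears. For the tail terms, $\UU$ and $\mathcal V$ are pointwise dominated by $M$ and by its adjoint-type operator. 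We bound $\UU f_j\le M f_j\le (Mg_j)^{1/p_j}$. For $\mathcal V$, boundedness on $L^{q_j}(w_j)$ holds for $w_j\in A_{q_j}\supset A_{q_j/p_j}$, since $\mathcal V$ is the adjoint of the Hardy operator and $w\in A_{q}$ implies $w^{1-q'}\in A_{q'}$, where $M$ is bounded. Applying Hölder again to the products of these operators finishes the estimate.

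\textbf{Main obstacle.} The delicate step is the tail estimate in Step 1. It requires a careful multilinear version of Stein's splitting in which the variables $y_j$ lie in different regions, and the resulting mixed terms must be shown to be dominated by products of $\UU$, $\mathcal V$ and $M$ applied to the individual $f_j$.
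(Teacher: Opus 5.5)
Your argument is correct once the abandoned dyadic-shell attempt in Step 1 is deleted. That means the sentences about summing shells and bounding each one by $\prod_j (M(|f_j|^s)(x))^{1/s}$ with some "decay"; they are not part of a valid argument. What remains takes a genuinely different, more elementary route than the paper. What you actually prove is the single-scale pointwise domination
\[
S_T^r(f_1,\dots,f_m)(x)\lesssim \prod_{j=1}^m \big(M(|f_j|^{p_j})(x)\big)^{1/p_j}+\prod_{j=1}^m \HH\big(|f_j|\big)(x), \qquad R=|x| .
\]
The first term comes from H\"older with exponent $p/r$ and \eqref{bilinearLpbound} applied to $f_j\chi_{B_{2R}}$. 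This function lies in $L^{p_j}$ because $w_j\in A_{q_j/p_j}$ makes $w_j^{-p_j/(q_j-p_j)}$ locally integrable. The second term comes from the size condition.

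The tail step you call the main obstacle is one line. If $|y|\le R$ and some $|y_{j_0}|\ge 2R$, then $\sum_j|y-y_j|\ge R$ and $|y_i|\le |y-y_i|+R$. Hence $\max\{R,|y_1|,\dots,|y_m|\}\le 2\sum_j|y-y_j|$. The kernel is therefore $\lesssim\prod_j\min\{|x|^{-n},|y_j|^{-n}\}$, and the tail is $\lesssim\prod_j\HH(|f_j|)(x)$ uniformly in $y\in B_R$. This is exactly the computation behind \eqref{outpointwise}. You do not need to expand into mixed $\UU/\VV$ products. H\"older with exponents $q_j/q$ together with \eqref{hhlpwest} on $L^{q_j}(w_j)$ closes the tail, which is valid since $A_{q_j/p_j}\subset A_{q_j}$. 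The local term uses boundedness of $M$ on $L^{q_j/p_j}(w_j)$.

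The paper proceeds as follows. For $u_j\in A_1$ it moves the Ces\`aro average onto the weight by Fubini and bounds $\HH\varrho_N\le\prod_j(\HH u_{j,N})^{p/p_j}$. It then applies Theorem \ref{bilinearHerz} to the radially nonincreasing $A_1$ weights $\HH u_{j,N}$; the truncation $u_{j,N}$ and Lemmas \ref{ca1} and \ref{rdrd} are needed to get \eqref{biconsondyad}. This yields the $A_1$ inequality \eqref{eq:bilinear-global-A1} with $\HH(|f_j|^{p_j})^{1/p_j}$ on the right. Finally it passes to $A_{q_j/p_j}$ weights via the multilinear extrapolation of Lemma \ref{multilinearRdf}.

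Your route avoids the Stein--Soria--Weiss machinery, the weight truncation and extrapolation altogether, and is self-contained. It makes clear that $w_j\in A_{q_j/p_j}$ is needed only for $M$ acting on $|f_j|^{p_j}$. The same two-line argument also proves Theorem \ref{mainlinearpq}. In exchange, the paper's route produces the endpoint $A_1$ estimate \eqref{eq:bilinear-global-A1} as a by-product. Your pointwise bound does not give that directly, since $M(|f_j|^{p_j})$ is generally not in $L^1(u_j)$. The paper's route also keeps the theorem inside its unifying scheme of reducing Ces\`aro bounds to weighted estimates for special dyadically comparable weights.
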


As a consequence, we have the following result. 
\begin{theorem}\label{thm:bilinear-global2}
Let $T$ be given by \eqref{bilinearTdef}, and assume that \eqref{bilinearsizecon} holds. Let $0<r\le 1/m$ be fixed.
If   \eqref{bilinearLpbound} holds for every $1<p_1,\dots,p_m<\infty$ and $1/m<p<\infty$ with $1/p=1/p_1+\cdots+1/p_m$, 
then  for every $1<q_1,\dots,q_m<\infty$  and $1/m<q<\infty$ with $1/q=1/q_1+\cdots+1/q_m$, and for every $w_j\in A_{q_j}$, $j=1,\dots,m$, 
we have
$$ T: L^{q_1}(w_1) \times\cdots\times L^{q_m}(w_m) \longrightarrow \mathcal{C}_q^{(r)}(\nu) $$
where $\nu=\prod_{j=1}^{m}w_j^{q/q_j}.$
\end{theorem}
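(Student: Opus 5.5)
Theorem \ref{thm:bilinear-global2} will follow from Theorem \ref{thm:bilinear-global} once we choose auxiliary exponents appropriately. Fix $1<q_1,\dots,q_m<\infty$ with $1/q=\sum_j 1/q_j$, $1/m<q<\infty$, and weights $w_j\in A_{q_j}$. The key input is the openness of Muckenhoupt classes. Since $w_j\in A_{q_j}$, there is $\varepsilon_j>0$ with $w_j\in A_{q_j-\varepsilon_j}$. We want exponents $p_j>1$ with $q_j/p_j\ge q_j-\varepsilon_j$, so that $w_j\in A_{q_j/p_j}$. It suffices to take $p_j=q_j/(q_j-\varepsilon_j)>1$, after shrinking $\varepsilon_j$ so that $q_j-\varepsilon_j>1$. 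Then automatically $p_j<q_j$.

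We then define $p$ by $1/p=\sum_j 1/p_j$. Since each $p_j>1$, we have $p<\infty$ and $1/p<m$, i.e.\ $p>1/m$. By hypothesis, \eqref{bilinearLpbound} holds for this tuple $(p_1,\dots,p_m)$. The exponent $r$ must satisfy $0<r<p$, which holds because $r\le 1/m<p$. Every hypothesis of Theorem \ref{thm:bilinear-global} is therefore satisfied: $q_j>p_j$, $w_j\in A_{q_j/p_j}$, and $1/q=\sum 1/q_j$. That theorem then gives
$$T:L^{q_1}(w_1)\times\cdots\times L^{q_m}(w_m)\longrightarrow \mathcal C_q^{(r)}(\nu),\qquad \nu=\prod_j w_j^{q/q_j},$$
which is the claim.

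The only delicate point is ensuring $q_j/p_j>1$ while keeping $w_j\in A_{q_j/p_j}$. Openness of $A_{q_j}$ (the reverse Hölder inequality) handles this, because $A_s\subset A_{q_j}$ is used in the direction $s=q_j-\varepsilon_j\le q_j$. The constraint $r\le 1/m$ is exactly what makes $r<p$ automatic for every admissible $p$, so no further restriction appears.
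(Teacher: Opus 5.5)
Your proposal is correct and follows essentially the same route as the paper. Both use openness of $A_{q_j}$ to find $1<p_j<q_j$ with $w_j\in A_{q_j/p_j}$, note that $1/m<p<\infty$ so the hypothesis \eqref{bilinearLpbound} applies and $r\le 1/m<p$, and then invoke Theorem \ref{thm:bilinear-global}; your explicit choice $p_j=q_j/(q_j-\varepsilon_j)$ just makes the paper's one-line reduction concrete.
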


In Section 2, we explain how these theorems apply to rough singular integrals, oscillatory rough singular integrals, and multilinear rough singular integrals. In several cases the usual weighted $A_p$ theory is already known; our point there is not to improve those results, but to show that they fit into a unified Stein-Soria-Weiss type mechanism and to obtain Herz and Ces\`aro space consequences.

\medskip

Let us briefly explain the main ideas behind the proofs. The basic strategy is to combine Stein's original localization argument with weighted estimates for
a radial Hardy-type operator. More precisely, after decomposing the input functions according to the dyadic annuli $I_k$, the action of the operator is split into a local part and an off-diagonal part. The local part is controlled by the assumed unweighted boundedness of $T$, together with the dyadic annular comparability condition on the weights. The off-diagonal part is estimated using only the size condition of the kernel and is dominated by a Hardy-type radial operator
$$\HH f(x)=\int_{\bbrn} \min\bigg\{\frac{1}{|x|^n},\frac{1}{|y|^n}\bigg\}|f(y)|\,dy.$$
Thus the heart of the argument is to obtain suitable weighted estimates for $\HH$, both on Lebesgue spaces and on Herz spaces. 
This approach also extends naturally to the multilinear setting, where the off-diagonal part is controlled by products of such Hardy-type operators.

For the Ces\`aro space estimates, a second idea is used. Since the Ces\`aro norm involves a radial averaging of $Tf$, we first convert this average, by Fubini's theorem, into weighted estimates for $T$ with respect to auxiliary weights built from the operator $\HH$. 
These auxiliary weights are radially nonincreasing $A_1$ weights and hence satisfy the dyadic annular comparability condition. This allows us to apply the Soria--Weiss theorem, or its multilinear analogue proved here, and then to pass to the desired weighted Ces\`aro estimates by Rubio de Francia extrapolation. In the local endpoint estimates, the logarithmic term arises from the well-known endpoint behavior of the Hardy-Littlewood maximal operator $M$: although $M$ is not bounded on $L^1$, its local $L^1$-averages can be controlled by an $L\log L$-type quantity via the standard rearrangement estimate.

\hfill

{\bf Organization.} In Section \ref{sec:2}, we present applications of the main results to rough singular integrals and related variants, first in the linear setting and then in the multilinear setting. Section \ref{sec:3} collects the necessary preliminary material on Muckenhoupt weights, extrapolation, weighted weak Lebesgue spaces, and $L^p\log L$ spaces. Section \ref{sec:4} contains the key estimates for the Hardy-type radial operator. The proofs of the linear results are given in Sections \ref{sec:5}--\ref{sec:8}, including the Herz space estimate and the local and global Ces\`aro space estimates. Finally, Sections \ref{sec:9}--\ref{sec:13} are devoted to the multilinear theory, including the multilinear Herz estimate, the weak-type estimate, and the local and global multilinear Ces\`aro space estimates.

\hfill

\section{Applications to rough singular integral operators}\label{sec:2}

We explain how known unweighted boundedness results, together with the size condition of the kernel, lead through our main theorems to weighted estimates and Herz space estimates for several classes of rough operators. The examples below include linear and multilinear rough singular integrals, oscillatory variants, and operators with radial amplitudes.

Let $\Omega\in L^1(\mathbb{S}^{mn-1})$ satisfy the cancellation condition
\begin{equation}\label{omecancon}
\int_{\mathbb{S}^{mn-1}}\Omega(\theta)\,d\sigma(\theta)=0,
\end{equation}
where $\sigma$ denotes the surface measure on the unit sphere $\mathbb{S}^{mn-1}$. 
We write
$$\mathfrak K_\Omega(y_1,\dots,y_m):= \frac{\Omega((y_1,\dots,y_m)')} {|(y_1,\dots,y_m)|^{mn}},$$
where
$(y_1,\dots,y_m)':=\frac{(y_1,\dots,y_m)}{|(y_1,\dots,y_m)|}\in\mathbb{S}^{mn-1}$.
Let $P$ be a real-valued measurable function on $(\bbrn)^{m+1}$, and let $h$ be a measurable function on $(\bbrn)^{m+1}$. 
Then we define the multilinear singular integral operator $\mathfrak{T}_{\Omega,h}^P$ by
\begin{equation}\label{general-rough-osc-amplitude}
\begin{aligned}
\mathfrak{T}_{\Omega,h}^P(f_1,\dots,f_m)(x) &:= \mathrm{p.v.}\int_{(\bbrn)^m} e^{iP(x,y_1,\dots,y_m)} h(x,y_1,\dots,y_m) \\
&\qquad \times \mathfrak K_\Omega(x-y_1,\dots,x-y_m) \prod_{j=1}^m f_j(y_j)\,dy_1\cdots dy_m .
\end{aligned}
\end{equation}
If $h \in L^{\infty}((\bbrn)^{m+1})$ and $\Omega \in L^{\infty}(\mathbb{S}^{mn-1})$, then the kernel of the operator $\mathfrak{T}_{\Omega,h}^P$ satisfies the size estimate
\begin{equation}\label{phkomkerest}
\begin{aligned}
&\big| e^{iP(x,y_1,\dots,y_m)} h(x,y_1,\dots,y_m) \mathfrak{K}_\Omega(x-y_1,\dots,x-y_m) \big|\\
&\qquad\lesssim \Vert h\Vert_{L^{\infty}((\bbrn)^{m+1})} \Vert \Omega\Vert_{L^{\infty}(\mathbb{S}^{mn-1})}\frac{1}{\big(|x-y_1|+\cdots+|x-y_m|\big)^{mn}}.
\end{aligned}
\end{equation}
Thus the size hypothesis in our main theorems is satisfied for this class of operators. The remaining input needed for our results is the corresponding unweighted boundedness, which depends on the particular form of $P$, $h$, and $\Omega$.

\subsection{The linear case $m=1$.} 
A frequently studied model is the radial amplitude case
$$T_{\Omega,h}^{P}f(x)=\mathrm{p.v.}\int_{\bbrn}e^{iP(x,y)}h(|x-y|)\frac{\Omega((x-y)')}{|x-y|^n}f(y)\,dy,$$
where $P$ is a real-valued polynomial on $\bbrn\times\bbrn$. 
This linear family contains several classical operators, depending on the choices of $P$ and $h$.

First, if $P\equiv 0$ and $h\equiv1$, then $T_{\Omega,h}^P$ reduces to the classical rough singular integral
$$ T_\Omega f(x)=\mathrm{p.v.}\int_{\bbrn}\frac{\Omega((x-y)')}{|x-y|^n}f(y)\,dy.$$
The study of $T_\Omega$ goes back to Calder\'on and Zygmund \cite{Ca_Zy1952, Ca_Zy1956}, who introduced the method of rotations and proved $L^p$ boundedness, $1<p<\infty$, for rough homogeneous kernels under appropriate assumptions on $\Omega$. 
In particular, the classical theory includes the case where $\Omega$ is odd and belongs to $L^1(\mathbb{S}^{n-1})$, and the case where $\Omega$ is even and belongs to $L\log L(\mathbb{S}^{n-1})$.

Second, if $h\equiv1$ but $P\not\equiv 0$, then $T_{\Omega,h}^P$ becomes the oscillatory rough singular integral
$$T_{\Omega}^Pf(x)= \mathrm{p.v.}\int_{\bbrn} e^{iP(x,y)} \frac{\Omega((x-y)')}{|x-y|^n}f(y)\,dy.$$
The $L^p$ theory of oscillatory singular integrals with polynomial phases began with Ricci and Stein \cite{RicciStein1987}, who considered smooth Calder\'on-Zygmund kernels. 
For rough kernels, $L^p$ boundedness was obtained by Lu and Zhang \cite{Lu_Zh1992} under conditions such as $\Omega\in L^q(\mathbb{S}^{n-1})$ for some $q>1$, and was later improved to $\Omega\in L\log L(\mathbb{S}^{n-1})$ by Jiang and Lu \cite{JiangLu1995}. 

Third, if $P\equiv 0$ but $h$ is nontrivial, one obtains
$$ T_{\Omega,h}f(x)=\mathrm{p.v.} \int_{\bbrn} h(|x-y|) \frac{\Omega((x-y)')}{|x-y|^n}f(y)\,dy.$$
This class was introduced by Fefferman \cite{Fe1979}, who proved $L^p$ boundedness under smoothness assumptions on $\Omega$ and boundedness of $h$. Namazi \cite{Na1986} later weakened the angular regularity assumption to $\Omega\in L^q(\mathbb{S}^{n-1})$, $q>1$. 
We also refer to \cite{Al_Pa2002, Duo_Ru1986} for further improvements and generalization.

Finally, if both $P$ and $h$ are nontrivial,  the available boundedness theory is more limited. In particular, one cannot at present treat arbitrary real-valued polynomial phases $P(x,y)$ in the same generality as in the case $h\equiv1$.
Recently, Al-Salman and Grafakos \cite{Al_Gr2026} obtained $L^p$ boundedness results for this type of operator under rough assumptions on $\Omega$ and $h$, but for rather special classes of phases. 
For example, they consider phases of the form
$$P(x,y)=|x|^\gamma(x\cdot y) \quad \text{or} \quad P(x,y)=|y|^\gamma(y\cdot x),\qquad 0\le \gamma\le 1,$$
under assumptions such as $\Omega\in L\log L(\mathbb{S}^{n-1})$ and $h\in L^\infty(0,\infty)$.

\medskip

Weighted estimates for rough oscillatory singular integrals have also been studied, but the available weight classes and the range of phases depend substantially on the form of the operator.

First, in the classical case $P\equiv 0$ and $h\equiv1$,  Duoandikoetxea and Rubio de Francia \cite{Duo_Ru1986} showed that if $\Omega\in L^\infty(\mathbb{S}^{n-1})$ satisfies the cancellation condition \eqref{omecancon} and $w\in A_p$, then
$$T_\Omega:L^p(w)\longrightarrow L^p(w).$$
More generally, weighted estimates for rough homogeneous singular integrals under weaker integrability assumptions on $\Omega$ were obtained by Watson \cite{Wa1990} and Duoandikoetxea \cite{Duoa1993}.

Second, when $h\equiv1$ but $P\not\equiv 0$,  weighted estimates in the usual Muckenhoupt scale are also known in several cases. 
In particular, sparse domination results for oscillatory rough singular integrals and their maximal truncations imply weighted $L^p(w)$ estimates for $A_p$ weights in the corresponding ranges; see, for example, Chen and Tao \cite{Ch_Ta2022} and Choudhary, Shrivastava, and Shuin \cite{Ch_Sh_Sh2025}. 
Thus, in the case $h\equiv1$, the weighted $A_p$ theory is relatively well developed, even in the presence of polynomial oscillatory factors.

Third, when $P=0$ but $h$ is nontrivial, Ojanen \cite{Oj2000} proved weighted $L^p$ estimates for rough kernels of the form
$K(y)=\frac{h(|y|)\Omega(y/|y|)}{|y|^n},$
where $h(|y|)$ is the radial factor and $\Omega(y/|y|)$ is the angular part of the kernel. 
However, the weights in Ojanen's theorem are not simply the usual Muckenhoupt $A_p$ weights; the condition is formulated in terms of rectangles arising from a stratified covering of a star-shaped set associated with $\Omega$. 
Thus, the weighted theory with a nontrivial radial factor is available, but it is not always expressed in the standard $A_p$ scale.

Finally, when both $P$ and $h$ are nontrivial, the available theory is significantly more limited. As mentioned above, even the unweighted boundedness of such operators is presently known only under rather restrictive assumptions on the phase and the radial factor. In particular, unlike the case $h\equiv 1$, there seems to be no general weighted $A_p$ theory for arbitrary real-valued polynomial phases $P(x,y)$ combined with a nontrivial rough radial factor $h$. Thus, while the results of Al-Salman and Grafakos \cite{Al_Gr2026} provide useful unweighted input for our theorem, no corresponding weighted $A_p$ estimates seem to be available at present in this level of generality.

Consequently, the role of our main theorems should be understood as follows. In cases where a full weighted $A_p$ theory is already known, such as the classical case $P\equiv 0$, $h\equiv1$, our conclusions are not meant to improve that theory. Rather, our theorem gives a uniform mechanism which, from unweighted $L^p$ boundedness and the size condition of the kernel, yields weighted estimates for dyadically comparable $A_p$ weights and, more generally, boundedness on the corresponding Herz scale. 
Moreover, even if the usual weighted estimate $T:L^p(w)\to L^p(w)$ is not available for $w\in A_p$, our Ces\`aro space theorem still gives a weaker weighted conclusion:
$$ T:L^p(w)\longrightarrow \mathcal{C}_p(w), \qquad w\in A_p,$$
provided the corresponding unweighted $L^s$ boundedness is known in the relevant range. Unlike the Herz space estimate, this Ces\`aro space conclusion does not require the dyadic annular comparability condition on the weight. This observation becomes particularly useful in settings, such as operators with a nontrivial radial factor, where the usual weighted theory is less complete.

\begin{theorem}
Let $1<p<\infty$, $0<q\le\infty$, and let $P(x,y)$ be a real-valued polynomial on $\bbrn\times\bbrn$. 
Let $\Omega\in L^\infty(\mathbb{S}^{n-1})$ satisfy the cancellation condition \eqref{omecancon}.
 If $w\in A_p$ satisfies the dyadic annular comparability condition \eqref{consondyad},
then
$$T_\Omega^P:\dot K_p^{0,q}(w)\longrightarrow \dot K_p^{0,q}(w).$$
 Moreover, if
$$-\frac{n}{p}<\alpha<\frac{n}{p'},$$
then
$$T_\Omega^P:\dot{K}_p^{\alpha,q}(\bbrn) \longrightarrow \dot{K}_p^{\alpha,q}(\bbrn).$$
\end{theorem}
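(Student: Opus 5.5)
The plan is to deduce both assertions directly from Theorem \ref{Herzextrapol} and Theorem \ref{cor:Herz-power}, so it suffices to check their two hypotheses for $T=T_\Omega^P$: the size condition \eqref{sizecon}, and the unweighted bound \eqref{LpboundT} with $p_0=p$.

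The size condition is the case $m=1$, $h\equiv 1$ of \eqref{phkomkerest}. Since $|e^{iP(x,y)}|=1$ because $P$ is real-valued, the kernel $K(x,y)=e^{iP(x,y)}\Omega((x-y)')|x-y|^{-n}$ satisfies \eqref{sizecon} with $B=\Vert\Omega\Vert_{L^\infty(\mathbb S^{n-1})}$.

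For the unweighted $L^p$ bound, I will cite the known theory of oscillatory rough singular integrals with polynomial phases. Since $\Omega\in L^\infty(\mathbb{S}^{n-1})\subset L^q(\mathbb S^{n-1})$ for every $q>1$ and $\Omega$ satisfies \eqref{omecancon}, the result of Lu and Zhang \cite{Lu_Zh1992} (or Jiang and Lu \cite{JiangLu1995} under $L\log L$) gives
$$T_\Omega^P:L^p(\bbrn)\to L^p(\bbrn)$$
for every $1<p<\infty$, with a bound depending only on $p$, $n$, $\deg P$ and $\Omega$. One subtlety is that $T_\Omega^P$ is defined as a principal value, while Theorem \ref{Herzextrapol} concerns the integral form \eqref{operatorT}. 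I will handle this by applying the argument to the truncations $T_{\Omega,\varepsilon,N}^P$, which integrate over $\varepsilon<|x-y|<N$. These satisfy \eqref{sizecon} uniformly. Their $L^p$ bounds are also uniform in $\varepsilon$ and $N$: the cited results, or the sparse/maximal bounds in \cite{Ch_Ta2022}, control the maximal truncated operator. The implicit constants in Theorem \ref{Herzextrapol} depend only on $B$, the $L^p$ norm, $[w]_{A_p}$ and the constant in \eqref{consondyad}, so the bounds pass to the limit via Fatou's lemma on each annulus $I_k$.

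With both hypotheses in hand, the first assertion, $T_\Omega^P:\dot K^{0,q}_p(w)\to\dot K^{0,q}_p(w)$ for $w\in A_p$ satisfying \eqref{consondyad}, is exactly Theorem \ref{Herzextrapol} with $p_0=p$. For the second assertion, take the power weight $w(x)=|x|^{p\alpha}$. The condition $-n/p<\alpha<n/p'$ places it in $A_p$, and power weights satisfy \eqref{consondyad}. So, as in the deduction preceding Theorem \ref{cor:Herz-power}, $\Vert f\Vert_{\dot K^{0,q}_p(w)}\sim\Vert f\Vert_{\dot K^{\alpha,q}_p}$, and Theorem \ref{cor:Herz-power} gives the second claim. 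The only genuine obstacle is justifying the unweighted input uniformly over the truncations; everything else is a direct specialization.
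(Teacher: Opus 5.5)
Your proposal is correct and follows the paper's route: you get the size bound from \eqref{phkomkerest} with $m=1$, $h\equiv 1$, take the unweighted $L^p$ input from Lu--Zhang (or Jiang--Lu), and then apply Theorem \ref{Herzextrapol} and the power-weight reduction behind Theorem \ref{cor:Herz-power}. Your truncation step is an extra precaution the paper does not take, and it needs the stronger input of uniform bounds for the truncations; it can be avoided, because the proof of Theorem \ref{Herzextrapol} uses the kernel only for $x\in I_k$, $y\notin I_k^*$, where the integral converges absolutely, and otherwise uses only linearity and the $L^p$ boundedness of $T_\Omega^P$ itself.
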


\begin{theorem}
Let $1<p<\infty$, $0<q\le\infty$, and let $\Omega\in L^\infty(\mathbb{S}^{n-1})$ satisfy \eqref{omecancon}.
Let $h\in L^{\infty}(0,\infty)$, and let $0\le \gamma\le 1$. 
Suppose that the phase $P$ is one of the following two functions:
$$ P(x,y)=|x|^\gamma \langle x, y\rangle \quad \text{or} \quad P(x,y)=|y|^\gamma \langle y, x\rangle.$$
Then the following assertions hold.
\begin{enumerate}
\item If $w\in A_p$ satisfies the dyadic annular comparability condition \eqref{consondyad},
then
$$ T_{\Omega,h}^{P}:\dot{K}_p^{0,q}(w) \longrightarrow \dot{K}_p^{0,q}(w).$$
Moreover, if
$$-\frac{n}{p}<\alpha<\frac{n}{p'},$$
then
$$T_{\Omega,h}^P:\dot{K}_p^{\alpha,q}(\bbrn)\longrightarrow \dot{K}_p^{\alpha,q}(\bbrn).$$

\item For every $w\in A_p$, we have
$$T_{\Omega,h}^P:L^p(w)\longrightarrow \mathcal{C}_p(w).$$
\end{enumerate}
\end{theorem}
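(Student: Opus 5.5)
The plan is to apply Theorem \ref{Herzextrapol}, Theorem \ref{cor:Herz-power} and Theorem \ref{mainlinearpq2} with $T=T_{\Omega,h}^P$ in the linear case $m=1$. Only two hypotheses need checking: the size condition \eqref{sizecon} and unweighted boundedness \eqref{LpboundT} on $L^{s}(\bbrn)$ for every $1<s<\infty$.

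\textbf{Size condition.} Since $|e^{iP}|=1$, $h\in L^\infty(0,\infty)$ and $\Omega\in L^\infty(\mathbb{S}^{n-1})$, estimate \eqref{phkomkerest} with $m=1$ gives
$$|K(x,y)|\le \Vert h\Vert_{L^\infty}\Vert\Omega\Vert_{L^\infty}\,|x-y|^{-n}.$$
So \eqref{sizecon} holds with $B=\Vert h\Vert_\infty\Vert\Omega\Vert_\infty$. One technical point is that the operator is defined as a principal value, whereas \eqref{operatorT} is an absolutely convergent integral. I would handle this by working with the truncations $T_\varepsilon$, with $\varepsilon\to0$. Each $T_\varepsilon$ satisfies the size bound uniformly in $\varepsilon$. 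Provided the unweighted bounds of Al-Salman and Grafakos \cite{Al_Gr2026} are uniform in the truncation, all the estimates pass to the limit by Fatou's lemma. This works because the Herz and Ces\`aro norms are lattice norms with the Fatou property, and because $T_\varepsilon f\to Tf$ a.e. for nice $f$, followed by a density argument. Alternatively, I would simply note, as the paper implicitly does, that the main theorems only use the pointwise size bound off the diagonal together with the $L^{p_0}$ bound.

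\textbf{Unweighted boundedness.} Here I would invoke \cite{Al_Gr2026}. For the two phases $|x|^\gamma\langle x,y\rangle$ and $|y|^\gamma\langle y,x\rangle$ with $0\le\gamma\le1$, they prove $L^s$ boundedness of $T^P_{\Omega,h}$ for all $1<s<\infty$ when $\Omega\in L\log L(\mathbb{S}^{n-1})$ has mean zero and $h\in L^\infty$. This class contains $L^\infty(\mathbb{S}^{n-1})$. This step is the main dependence on external input: one must make sure their result indeed covers the full range $1<s<\infty$. Otherwise the Ces\`aro conclusion would only hold for $p$ in a restricted range, via Theorem \ref{mainlinearpq}.

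\textbf{Conclusion.} Part (1) follows from Theorem \ref{Herzextrapol} with $p_0=p$, and the power-weight statement follows from Theorem \ref{cor:Herz-power}. Part (2) follows from Theorem \ref{mainlinearpq2}, since \eqref{LpboundT} holds for every $p_0\in(1,\infty)$; this gives $T:L^p(w)\to\mathcal C_p(w)$ for all $w\in A_p$, with no annular condition on $w$.
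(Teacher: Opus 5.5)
Your proposal follows the same route as the paper. You check the size bound via \eqref{phkomkerest} and take the unweighted $L^s$ bounds for all $1<s<\infty$ from \cite{Al_Gr2026}. You then apply Theorem \ref{Herzextrapol} (and Theorem \ref{cor:Herz-power} for power weights) for part (1), and Theorem \ref{mainlinearpq2} for part (2). Your principal-value caveat is a technicality the paper leaves implicit, and your second resolution is the right reading: the proofs only use the kernel bound off the diagonal together with the unweighted $L^{p_0}$ bound.
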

The assertions in part (1) follow from Theorem \ref{Herzextrapol}, while part (2) follows from Theorem \ref{mainlinearpq2}, since the corresponding unweighted $L^s$ boundedness, $1<s<\infty$, follows from the results cited above for these phases and amplitudes.

\medskip

\subsection{The multilinear case $m\ge 2$}
In contrast with the linear case, there are very few boundedness results for oscillatory rough singular integrals with a nontrivial amplitude in the multilinear setting. For this reason, we first focus on the basic multilinear rough singular integral, corresponding to $P\equiv 0$ and $h\equiv 1$.

In this case, the multilinear operator in \eqref{general-rough-osc-amplitude} becomes
\begin{equation*}
\mathfrak{T}_\Omega\big(f_1,\dots,f_m\big)(x) := \mathrm{p.v.} \int_{(\bbrn)^m} \mathfrak K_\Omega(x-y_1,\dots,x-y_m) \prod_{j=1}^m f_j(y_j)\,dy_1\cdots dy_m.
\end{equation*}
The boundedness theory for $\mathfrak{T}_\Omega$ was developed by Grafakos and Torres \cite{Gr_To2002} in the Calder\'on-Zygmund setting. 
In the rough kernel setting, Grafakos, He, and Honz\'ik \cite{Gr_He_Ho2018} proved $L^{p_1}\times L^{p_2}\to L^p$ boundedness in the bilinear case when $\Omega\in L^\infty(\mathbb{S}^{2n-1})$. Further improvements and extensions to the multilinear setting were obtained in \cite{Do_Sl2024,Do_Sl_Park2026,Gr_He_Ho_Park_JLMS,He_Park2023}.

Weighted estimates for multilinear rough singular integrals have also been studied by the second author \cite{Park_submitted} who obtained the following weighted estimate, which extends the classical linear result of Duoandikoetxea and Rubio de Francia \cite{Duo_Ru1986} to the multilinear setting.
\begin{customthm}{B}\label{singularthm}\cite{Park_submitted}
Let $1<p_1,\dots,p_m<\infty$, and let $1/m<p<\infty$ be defined by ${1}/{p}={1}/{p_1}+\cdots+{1}/{p_m}.$
Suppose that $w_j\in A_{p_j}$, $j=1,\dots,m$. Then we have
$$\big\Vert \mathfrak{T}_{\Omega}(f_1,\dots,f_m )\big\Vert_{L^p(\nu)} \lesssim\Vert \Omega\Vert_{L^{\infty}(\mathbb{S}^{mn-1})} \prod_{j=1}^{m}\Vert f_j\Vert_{L^{p_j}(w_j)},$$
where $\nu=\prod_{j=1}^{m}w_j^{p/p_j}.$
\end{customthm}

The original version in \cite{Park_submitted} is stated in the multiple-weight setting under general $L^q$ condition on $\Omega$, as a multilinear extension of  Watson \cite{Wa1990} and Duoandikoetxea \cite{Duoa1993}.
Although a weighted $L^p$ theory for $\mathfrak{T}_\Omega$ is already available, our main theorems also yield the following Herz space consequences
from its unweighted boundedness and kernel size estimate.

In view of \eqref{phkomkerest}, Theorem \ref{bilinearHerz} applies.
\begin{theorem}\label{cor:rough-multilinear-Herz}
Let $\Omega\in L^\infty(\mathbb{S}^{mn-1})$ satisfy \eqref{omecancon}.
Let $1<p_1,\dots,p_m<\infty$, and define $p$ by ${1}/{p}={1}/{p_1}+\cdots+{1}/{p_m}.$
Let $w_j\in A_{p_j}$, $j=1,\dots,m$, satisfy the dyadic annular comparability condition \eqref{biconsondyad}.
Then, for every $0<q_1,\dots,q_m\le\infty$ and $0<q<\infty$ with ${1}/{q}={1}/{q_1}+\cdots+{1}/{q_m},$
we have
$$ \mathfrak{T}_\Omega:\dot{K}_{p_1}^{0,q_1}(w_1) \times\cdots\times \dot{K}_{p_m}^{0,q_m}(w_m)\longrightarrow \dot{K}_p^{0,q}(\nu),$$
where $\nu=\prod_{j=1}^m w_j^{p/p_j}.$
\end{theorem}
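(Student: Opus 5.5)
The statement follows from Theorem \ref{bilinearHerz}, so the plan is to verify its two hypotheses for $T=\mathfrak{T}_\Omega$: the multilinear size condition \eqref{bilinearsizecon} and the unweighted bound \eqref{bilinearLpbound}.

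\emph{Size condition.} This is \eqref{phkomkerest} with $P\equiv 0$ and $h\equiv 1$. Since $|(y_1,\dots,y_m)|\ge m^{-1/2}(|y_1|+\cdots+|y_m|)$, we get
\[
|\mathfrak K_\Omega(x-y_1,\dots,x-y_m)|\lesssim \frac{\Vert\Omega\Vert_{L^\infty(\mathbb{S}^{mn-1})}}{(|x-y_1|+\cdots+|x-y_m|)^{mn}} .
\]
So \eqref{bilinearsizecon} holds with $B\lesssim\Vert\Omega\Vert_{L^\infty}$.

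\emph{Unweighted bound.} We need $\mathfrak T_\Omega:L^{p_1}\times\cdots\times L^{p_m}\to L^p$ for the given $1<p_j<\infty$. The simplest route is Theorem \ref{singularthm} with $w_j\equiv 1$ for all $j$. These weights lie in every $A_{p_j}$, and then $\nu\equiv1$, which gives exactly \eqref{bilinearLpbound}. Alternatively one could cite the unweighted results of \cite{Gr_He_Ho2018} and \cite{Do_Sl2024,Do_Sl_Park2026,Gr_He_Ho_Park_JLMS,He_Park2023}, but Theorem \ref{singularthm} covers the full range $1<p_j<\infty$ uniformly.

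\emph{Interpretation of $\mathfrak T_\Omega$.} The one point needing care is that $\mathfrak T_\Omega$ is defined as a principal value, whereas Theorem \ref{bilinearHerz} is stated for operators given by an absolutely convergent integral \eqref{bilinearTdef}. I would handle this by the standard convention: the size condition is only used off the diagonal. In the proof of Theorem \ref{bilinearHerz}, the local part (inputs and output in nearby annuli) uses only the boundedness \eqref{bilinearLpbound} applied to the localized functions $f_j\chi_{I_{k_j}}$. The off-diagonal part involves separated supports, where the integral converges absolutely and the kernel bound applies.

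\emph{Conclusion.} Alternatively, apply the theorem to the truncations $\mathfrak T_{\Omega,\epsilon}$, which have the same size bound, and pass to the limit $\epsilon\to0$ via Fatou. With the hypotheses verified, Theorem \ref{bilinearHerz} gives the Herz estimate for every $0<q_1,\dots,q_m\le\infty$, in particular for $q<\infty$.

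The truncation route has one obstacle. It needs uniform unweighted bounds for $\mathfrak T_{\Omega,\epsilon}$, and I expect this is only a bookkeeping issue rather than a real difficulty.
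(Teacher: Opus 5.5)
Your proposal is correct and matches the paper's argument. The paper gets the size condition from \eqref{phkomkerest} with $P\equiv 0$, $h\equiv 1$, takes the unweighted $L^{p_1}\times\cdots\times L^{p_m}\to L^p$ bound as known (from the cited works, or Theorem \ref{singularthm} with $w_j\equiv1$), and then simply invokes Theorem \ref{bilinearHerz}. Your principal-value and truncation remarks are unnecessary, and in the paper's proof the local part localizes all inputs to the same annulus $I_k^*$, not to separate $I_{k_j}$.
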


Taking power weights in Theorem \ref{cor:rough-multilinear-Herz}, we obtain the following Herz space estimates.
\begin{corollary}\label{cor:rough-multilinear-unweighted-Herz}
Let $\Omega\in L^\infty(\mathbb{S}^{mn-1})$ satisfy \eqref{omecancon}.
Let $1<p_1,\dots,p_m<\infty$, and define $p$ by $1/p=1/{p_1}+\cdots+1/{p_m}.$
Let $0<q_1,\dots,q_m\le\infty$ and $0<q<\infty$ satisfy $1/q=1/{q_1}+\cdots+1/{q_m}.$
If
$$-\frac{n}{p_j}<\alpha_j<\frac{n}{p_j'},\qquad j=1,\dots,m,$$
then
$$ \mathfrak{T}_\Omega: \dot{K}_{p_1}^{\alpha_1,q_1}(\bbrn) \times\cdots\times \dot{K}_{p_m}^{\alpha_m,q_m}(\bbrn) \longrightarrow \dot{K}_p^{\alpha_1+\cdots+\alpha_m,q}(\bbrn).$$
\end{corollary}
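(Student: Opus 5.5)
The corollary follows from Theorem \ref{cor:rough-multilinear-Herz} by specializing to power weights, in the same way that Theorem \ref{cor:bilinearHerz} was obtained from Theorem \ref{bilinearHerz}. Equivalently, we may apply Theorem \ref{cor:bilinearHerz} directly to $T=\mathfrak T_\Omega$. That theorem needs two inputs: the size condition \eqref{bilinearsizecon} and the unweighted bound \eqref{bilinearLpbound}.

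The size condition is \eqref{phkomkerest} with $h\equiv1$ and $P\equiv0$, using the comparability $|(y_1,\dots,y_m)|\sim|y_1|+\cdots+|y_m|$. It holds with $B\lesssim\Vert\Omega\Vert_{L^\infty(\mathbb S^{mn-1})}$.

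For the unweighted bound we use Theorem \ref{singularthm} with $w_j\equiv1\in A_{p_j}$. This gives
$$\mathfrak T_\Omega:L^{p_1}(\bbrn)\times\cdots\times L^{p_m}(\bbrn)\to L^p(\bbrn)$$
for all $1<p_j<\infty$, which is \eqref{bilinearLpbound}. The rough-kernel results of \cite{Gr_He_Ho2018} and its multilinear extensions would also suffice.

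It remains to carry out the power-weight reduction explicitly, following the argument given after Theorem \ref{cor:bilinearHerz}.

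\emph{Weights.} Set $w_j(x)=|x|^{p_j\alpha_j}$. The hypothesis $-n/p_j<\alpha_j<n/p_j'$ is equivalent to $-n<p_j\alpha_j<n(p_j-1)$, so $w_j\in A_{p_j}$. Power weights satisfy \eqref{biconsondyad}: on $I_k$ and on $I_k^*$ the weight is comparable to $2^{kp_j\alpha_j}$, with constants depending only on $p_j\alpha_j$.

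\emph{Target weight.} We have $\nu=\prod_j w_j^{p/p_j}=|x|^{p(\alpha_1+\cdots+\alpha_m)}$.

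\emph{Norm equivalences.} Since $|x|\sim2^k$ on $I_k$,
$$\Vert\chi_{I_k}f_j\Vert_{L^{p_j}(w_j)}\sim 2^{k\alpha_j}\Vert\chi_{I_k}f_j\Vert_{L^{p_j}}, \qquad \Vert\chi_{I_k}g\Vert_{L^{p}(\nu)}\sim 2^{k(\alpha_1+\cdots+\alpha_m)}\Vert\chi_{I_k}g\Vert_{L^p}.$$
Consequently $\Vert f_j\Vert_{\dot K^{0,q_j}_{p_j}(w_j)}\sim\Vert f_j\Vert_{\dot K^{\alpha_j,q_j}_{p_j}}$, and likewise for the output space.

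\emph{Conclusion.} Theorem \ref{cor:rough-multilinear-Herz}, or Theorem \ref{bilinearHerz}, now yields the stated Herz bound.

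There is no real obstacle. The only points needing care are the following.
\begin{enumerate}
\item The unweighted input in the full range $1<p_j<\infty$ (including $p<1$) is supplied by Theorem \ref{singularthm}.
\item The constant $B$ depends only on $\Omega$. This is used only to record the dependence $\lesssim\Vert\Omega\Vert_{L^\infty}$ of the bound.
\end{enumerate}
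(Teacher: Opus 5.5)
Your proposal is correct and follows the paper's own route. The paper obtains this corollary by taking the power weights $w_j(x)=|x|^{p_j\alpha_j}$ in Theorem \ref{cor:rough-multilinear-Herz}, which itself comes from Theorem \ref{bilinearHerz} via the size bound \eqref{phkomkerest} and the unweighted boundedness of $\mathfrak T_\Omega$; this is exactly how Theorem \ref{cor:bilinearHerz} is derived from Theorem \ref{bilinearHerz}, and your explicit checks of the $A_{p_j}$ condition, of \eqref{biconsondyad}, of $\nu=|x|^{p(\alpha_1+\cdots+\alpha_m)}$, and of the Herz norm equivalences are the steps the paper leaves implicit.
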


\medskip

Finally, let us return to the general multilinear operator $\mathfrak T_{\Omega,h}^P$ in \eqref{general-rough-osc-amplitude}. 
If $h\in L^\infty((\bbrn)^{m+1})$ and $\Omega\in L^\infty(\mathbb{S}^{mn-1})$, then the kernel satisfies the multilinear size condition \eqref{phkomkerest}. Therefore, once the corresponding unweighted multilinear boundedness is available, our results can be applied in the same way.
More precisely, assume that
$$\mathfrak{T}_{\Omega,h}^{P}:L^{p_1}(\bbrn)\times\cdots\times L^{p_m}(\bbrn) \longrightarrow L^p(\bbrn), \qquad 1/p=1/{p_1}+\cdots+1/{p_m}.$$
Then Theorem \ref{bilinearHerz} gives
$$ \mathfrak{T}_{\Omega,h}^P: \dot{K}_{p_1}^{0,q_1}(w_1)\times\cdots\times \dot{K}_{p_m}^{0,q_m}(w_m) \longrightarrow \dot{K}_p^{0,q}(\nu)$$
for weights $w_j\in A_{p_j}$ satisfying \eqref{biconsondyad} and for any $0<q_1,\dots,q_m\le \infty$ with $1/q=1/q_1+\cdots+1/q_m$, 
where $\nu=\prod_{j=1}^m w_j^{p/p_j}$.

Similarly, if the corresponding unweighted boundedness is available in the full range required by Theorem \ref{thm:bilinear-global2}, then one obtains
the corresponding Ces\`aro space estimate even without the dyadic annular comparability condition \eqref{biconsondyad}.

\begin{remark}
Some papers use the term ``multilinear oscillatory singular integral'' for operators involving Taylor remainders or higher order commutator-type structures in  the form
$$ T_A f(x)= \mathrm{p.v.} \int_{\bbrn} e^{iP(x,y)} \frac{\Omega(x-y)}{|x-y|^{n+m}} R_{m+1}(A;x,y)f(y)\,dy, $$
where $R_{m+1}(A;x,y)$ is a Taylor remainder. 
This is different from the $m$-linear operator $\mathfrak T_{\Omega,h}^P$ considered above. 
Therefore, for ``genuinely multilinear'' oscillatory rough operators with amplitude, the conclusions above should be read as conditional applications of our main results, pending the corresponding unweighted $L^{p_1}\times\cdots\times L^{p_m}\to L^p$ boundedness theory.
\end{remark}

\hfill

\section{Preliminaries}\label{sec:3}

Let $M$  be the Hardy--Littlewood maximal operator, defined by
$$ Mf(x):=\sup_{B:  x\in B}\frac{1}{|B|}\int_B \big|f(y)\big|\,dy,$$
where the supremum is taken over all balls $B\subset \bbrn$ containing $x$.

\subsection{$A_p$ weights}\label{subsec:Apweights}
Let $w$ be a weight on $\bbrn$, that is, a nonnegative locally integrable function. 
For $1<p<\infty$, we say that $w\in A_p$ if
$$[w]_{A_p}:= \sup_B \bigg(\frac{1}{|B|}\int_B w(x)\,dx\bigg) \bigg(\frac1{|B|}\int_B w(x)^{-\frac{1}{p-1}} \, dx \bigg)^{p-1}<\infty,$$
where the supremum is taken over all balls $B\subset \bbrn$.
For $p=1$, we say that $w\in A_1$ if
$$[w]_{A_1}:=\sup_B \bigg(\frac1{|B|}\int_B w(x)\,dx\bigg) \Big(\essinf_{x\in B} w(x)\Big)^{-1}<\infty.$$
Equivalently,
\begin{equation}\label{a1character}
w\in A_1 \quad\Longleftrightarrow\quad Mw(x)\lesssim w(x) \quad \text{for a.e. } x\in\bbrn.
\end{equation}

We shall also use the standard notation
$$ A_\infty:=\bigcup_{1\le p<\infty}A_p.$$
It is known that if $w\in A_{\infty}$, then there exists $\delta>0$ such that 
\begin{equation}\label{ainftycha}
\frac{w(B')}{w(B)}\lesssim \bigg( \frac{|B'|}{|B|}\bigg)^{\delta} \quad \text{for all concentric balls $B'\subset B$}.
\end{equation}

One of the very useful properties of the Muckenhoupt classes is their connection with the Hardy--Littlewood maximal operator. More precisely, if $1<p<\infty$ and $w\in A_p$, then
\begin{equation}\label{weightHLmax}
    \Vert Mf\Vert_{L^p(w)} \le C \Vert f\Vert_{L^p(w)}.
\end{equation}
Moreover, if $w\in A_1$, then
\begin{equation*}
    \Vert Mf\Vert_{L^{1,\infty}(w)} \le C \Vert f\Vert_{L^1(w)}.
\end{equation*}
Here the constants depend only on the dimension, the exponent, and the corresponding Muckenhoupt constant of the weight.

The $A_p$ classes also have the following openness property.
\begin{lemma}\label{weightopen}
Let $1<p\le \infty$. If $w\in A_p$,  there exists $1<q<p$ such that $w\in A_q$.
\end{lemma}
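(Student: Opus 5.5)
The plan is to obtain the openness from the reverse Hölder inequality applied to the dual weight. For $1<p<\infty$ and $w\in A_p$, set $\sigma:=w^{-1/(p-1)}=w^{1-p'}$. The $A_p$ condition is symmetric under duality: $w\in A_p$ if and only if $\sigma\in A_{p'}$, with $[\sigma]_{A_{p'}}=[w]_{A_p}^{p'-1}$. In particular $\sigma\in A_\infty$. I will use the standard reverse Hölder inequality for $A_\infty$ weights (the same self-improvement that underlies \eqref{ainftycha}). It provides $\varepsilon>0$ and $C>0$ such that, for every ball $B$,
$$\bigg(\frac1{|B|}\int_B \sigma^{1+\varepsilon}\,dx\bigg)^{\frac1{1+\varepsilon}}\le C\,\frac1{|B|}\int_B\sigma\,dx .$$

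Next choose $q$ by $q-1=(p-1)/(1+\varepsilon)$, so that $1<q<p$. Then $w^{-1/(q-1)}=\sigma^{1+\varepsilon}$ and $(q-1)(1+\varepsilon)=p-1$. The reverse Hölder inequality gives
$$\bigg(\frac1{|B|}\int_B w^{-\frac1{q-1}}\bigg)^{q-1}=\bigg(\frac1{|B|}\int_B\sigma^{1+\varepsilon}\bigg)^{\frac{p-1}{1+\varepsilon}}\le C^{p-1}\bigg(\frac1{|B|}\int_B\sigma\bigg)^{p-1}.$$
Multiplying by $\frac1{|B|}\int_B w$ and taking the supremum over all balls gives $[w]_{A_q}\le C^{p-1}[w]_{A_p}<\infty$.

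For $p=\infty$, the statement means that if $w\in A_\infty=\bigcup_{1\le p<\infty}A_p$, then $w\in A_q$ for some finite $q$. This is immediate from the definition of $A_\infty$ adopted in the paper: if $w\in A_{p_1}$ with $p_1<\infty$, take $q=\max\{p_1,2\}$, using the monotonicity $A_{p_1}\subset A_q$ for $p_1\le q$, which follows from Hölder's (or Jensen's) inequality.

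The only real content is the reverse Hölder inequality. If it cannot be quoted, I would prove it by a Calderón--Zygmund decomposition of $\sigma$ on a fixed cube, at heights $2^{nk}$ times its average. The $A_\infty$ property, in the form that subsets of small Lebesgue proportion carry small $\sigma$-proportion, then makes the level sets decay geometrically in $\sigma$-measure, and summing gives integrability of $\sigma^{1+\varepsilon}$ for small $\varepsilon$. This is the step I expect to be the main obstacle. Since it is classical (Coifman--Fefferman), I would cite it rather than reprove it.
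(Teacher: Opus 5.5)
Your proof is correct. The paper gives no argument of its own and simply refers to \cite{gr:gr}, where openness is proved exactly as you do: apply the reverse H\"older inequality to $\sigma=w^{-1/(p-1)}\in A_{p'}$ and take $q-1=(p-1)/(1+\varepsilon)$, while your case $p=\infty$ follows directly from the paper's definition $A_\infty=\bigcup_{1\le p<\infty}A_p$ together with the inclusion $A_{p_1}\subset A_q$ for $p_1\le q$.
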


We refer to \cite{gr:gr} for more details about properties of $A_p$ weights.

\medskip

\subsection{Extrapolation}

We recall the following form of Rubio de Francia extrapolation.
\begin{lemma}[\cite{Duo2011}]\label{rfrf}
Let $(f,g)$ be a pair of nonnegative measurable functions. Suppose that for some $1\le p_0<\infty$,
$$ \Vert g\Vert_ {L^{p_0}(v)} \le \varphi([v]_{A_1}) \Vert f\Vert_ {L^{p_0}(v)},  \qquad \text{for every } v\in A_1, $$
where $\varphi$ is a nondecreasing function on $[1,\infty)$.
Then, for every $p_0<p<\infty$ and every $v\in A_{p/p_0}$,
$$\Vert g\Vert_ {L^p(v)} \le \Phi([v]_{A_{p/p_0}}) \Vert f\Vert_ {L^p(v)},$$
where $\Phi$ is a nondecreasing function depending only on $p,p_0$, and $\varphi$.
\end{lemma}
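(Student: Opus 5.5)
We will run the Rubio de Francia iteration algorithm combined with duality in $L^{p/p_0}(v)$. Set $r:=p/p_0>1$ and fix $v\in A_r$. We may assume $\Vert f\Vert_{L^p(v)}<\infty$, since otherwise there is nothing to prove. Because $g\ge 0$,
$$\Vert g\Vert_{L^p(v)}^{p_0}=\big\Vert g^{p_0}\big\Vert_{L^r(v)}=\sup\Big\{\int_{\bbrn} g^{p_0}\,h\,v\,dx:\ h\ge 0,\ \Vert h\Vert_{L^{r'}(v)}\le 1\Big\}.$$
This supremum formula holds for nonnegative measurable functions even when the norm is infinite. It therefore suffices to bound $\int g^{p_0}hv$ uniformly in such $h$.

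\textbf{Step 1: the operator to iterate.} For $h\ge 0$ put $Sh:=M(hv)/v$. Since $v\in A_r$, the dual weight $\sigma:=v^{1-r'}$ belongs to $A_{r'}$ with $[\sigma]_{A_{r'}}=[v]_{A_r}^{r'-1}$. By \eqref{weightHLmax} with weight $\sigma$,
$$\int (Sh)^{r'}v=\int M(hv)^{r'}\sigma\le C\int (hv)^{r'}\sigma=C\int h^{r'}v .$$
Hence $S$ is bounded on $L^{r'}(v)$ with norm $\Vert S\Vert$ controlled by a nondecreasing function of $[v]_{A_r}$. This uses the standard fact that the weighted maximal bound depends monotonically on the $A_{r'}$ constant, as in Buckley's estimate.

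\textbf{Step 2: the Rubio de Francia majorant.} Define
$$\mathcal Rh:=\sum_{k\ge 0}\frac{S^kh}{(2\Vert S\Vert)^k}.$$
This series has three properties:
\begin{enumerate}
\item $h\le \mathcal Rh$;
\item $\Vert \mathcal Rh\Vert_{L^{r'}(v)}\le 2\Vert h\Vert_{L^{r'}(v)}$;
\item $S(\mathcal Rh)\le 2\Vert S\Vert\,\mathcal Rh$, by sublinearity of $M$.
\end{enumerate}
Property (3) reads $M(\mathcal Rh\,v)\le 2\Vert S\Vert\,\mathcal Rh\,v$. By \eqref{a1character}, the function $W:=(\mathcal Rh)\,v$ is therefore an $A_1$ weight with $[W]_{A_1}\lesssim \Vert S\Vert$. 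For $W$ to be a weight we need local integrability, which follows from Hölder: $\int_B W\le \Vert\mathcal Rh\Vert_{L^{r'}(v)}\,v(B)^{1/r}<\infty$.

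\textbf{Step 3: conclusion.} Using (1), then the hypothesis with weight $W$, then Hölder in $L^r(v)\times L^{r'}(v)$ together with (2), we get
$$\int g^{p_0}hv\le\int g^{p_0}W\le \varphi([W]_{A_1})^{p_0}\int f^{p_0}\,\mathcal Rh\,v\le 2\,\varphi\big(C\Vert S\Vert\big)^{p_0}\,\Vert f\Vert_{L^p(v)}^{p_0}.$$
The last step uses $\varphi([W]_{A_1})\le\varphi(C\Vert S\Vert)$, which holds because $\varphi$ is nondecreasing. Taking the supremum over $h$ gives the claim with $\Phi(t):=2^{1/p_0}\varphi\big(C\,\Psi(t)\big)$, where $\Psi$ is the nondecreasing bound for $\Vert S\Vert$ in terms of $t=[v]_{A_r}$.

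\textbf{Main obstacle.} The only delicate point is the bookkeeping of constants: $\Phi$ must depend only on $p$, $p_0$, $\varphi$ and be nondecreasing in $[v]_{A_{p/p_0}}$. The first thing to try is a quantitative form of \eqref{weightHLmax}, namely Buckley's bound $\Vert M\Vert_{L^{r'}(\sigma)}\lesssim[\sigma]_{A_{r'}}^{1/(r'-1)}$. Combined with $[\sigma]_{A_{r'}}=[v]_{A_r}^{r'-1}$, it gives $\Vert S\Vert\lesssim [v]_{A_r}$, which makes the monotone dependence explicit.
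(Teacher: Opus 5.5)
Your proof is correct. It is the standard Rubio de Francia argument: duality in $L^{p/p_0}(v)$, iteration of $h\mapsto M(hv)/v$ on $L^{(p/p_0)'}(v)$, and Buckley's bound giving $[(\mathcal Rh)v]_{A_1}\lesssim [v]_{A_{p/p_0}}$. The paper gives no proof of its own and simply cites \cite{Duo2011}, and your argument follows essentially the same scheme as that source.
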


We shall also use the following multilinear version, which follows from the limited range multilinear extrapolation theorem of Cruz-Uribe and Martell
\cite{Cr_Ma2018}. More precisely, it is obtained by taking $r_j^-=1$ and $r_j^+=\infty$ in Theorem 1.3 of \cite{Cr_Ma2018}, together with Remark 1.7 therein.

\begin{lemma}[\cite{Cr_Ma2018}]\label{multilinearRdf}
Let $m\ge 1$, and let  $(f,f_1,\ldots,f_m)$ be an $(m+1)$-tuple of nonnegative measurable functions.
Suppose that for some exponents $1\le p_1,\ldots,p_m<\infty$ with $1/p=1/p_1+\dots+1/p_m,$
$$ \Vert f\Vert_ {L^p(\nu)} \le C \prod_{j=1}^m \Vert f_j\Vert_ {L^{p_j}(w_j)} \quad \text{for every $w_1,\ldots,w_m\in A_1$}$$
where $\nu:=\prod_{j=1}^m w_j^{p/p_j}$.
Then, for every $q_1,\ldots,q_m$ with $p_j<q_j<\infty$,  $j=1,\ldots,m$, and $1/q=1/q_1+\dots+1/q_m,$ and for every collection of weights $v_1,\ldots,v_m$ with $v_j\in A_{q_j/p_j}$, $j=1,\ldots,m$, we have
$$\Vert f\Vert_ {L^q(\mu)} \le C \prod_{j=1}^m \Vert f_j\Vert_ {L^{q_j}(v_j)},$$
 where $\mu:=\prod_{j=1}^m v_j^{q/q_j}.$
The constant depends only on the dimension, the exponents, and the relevant Muckenhoupt constants of the weights.
\end{lemma}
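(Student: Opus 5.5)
The plan is to obtain the lemma as a special case of the limited range multilinear extrapolation theorem of Cruz-Uribe and Martell \cite{Cr_Ma2018}, and to check that the parameters match. That theorem is formulated for exponents $r_j^-\le p_j\le r_j^+$ and weights $w_j\in A_{p_j/r_j^-}\cap RH_{(r_j^+/p_j)'}$. The first step is to choose the parameters so that this weight class becomes exactly $A_1$.

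\emph{Choice of parameters.} I take $r_j^+=\infty$, so that the reverse Hölder condition $RH_{(r_j^+/p_j)'}=RH_1$ is vacuous. For the lower endpoint, the stated choice $r_j^-=1$ gives the class $A_{p_j}$. This is larger than $A_1$, so the hypothesis of the lemma (which only involves $A_1$ weights) is formally weaker than what Theorem 1.3 requires. This is exactly what Remark 1.7 of \cite{Cr_Ma2018} addresses. It allows the starting estimate to be assumed only for $w_j\in A_1$, which is the endpoint case $r_j^-=p_j$, where $A_{p_j/r_j^-}=A_1$. The conclusion then holds for every $q_j\in(r_j^-,\infty)=(p_j,\infty)$ and $v_j\in A_{q_j/r_j^-}=A_{q_j/p_j}$, with $\mu=\prod_j v_j^{q/q_j}$. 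The constants depend only on $n$, the exponents, and the Muckenhoupt constants. So the core of the proof is the bookkeeping of this translation of exponents and weight classes.

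\emph{Direct argument, if one is wanted.} I would follow the Rubio de Francia scheme, in the spirit of Lemma \ref{rfrf}. Fix $q_j>p_j$ and $v_j\in A_{q_j/p_j}$, and set $s_j=q_j/p_j>1$. For each $j$, build a Rubio de Francia iteration of $M$ on $L^{s_j'}(v_j^{1-s_j'})$. This produces a function $h_j$ with $h_j\,v_j^{1-s_j'}\in A_1$ after a suitable factorisation, or alternatively it constructs $H_j\ge |f_j|^{p_j}$-type majorants with $H_j\in A_1$. These give $A_1$ weights $w_j$ such that two things hold. First, $\Vert f_j\Vert_{L^{p_j}(w_j)}\lesssim \Vert f_j\Vert_{L^{q_j}(v_j)}\cdot(\text{normalising factor})$ by Hölder. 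Second, $\Vert f\Vert_{L^q(\mu)}$ is controlled by $\Vert f\Vert_{L^p(\nu)}$ with $\nu=\prod_j w_j^{p/p_j}$, via Hölder with exponents adapted to $q/p$. Inserting the hypothesis for these $A_1$ weights then closes the estimate.

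\emph{Main obstacle.} The hard step is the duality step. Since $q/p$ may be less than $1$ (recall $p$ can be below $1$), one cannot simply dualise $L^{q/p}(\mu)$. Moreover, the $m$ separate Rubio de Francia weights must multiply to exactly $\nu=\prod_j w_j^{p/p_j}$ while each $w_j$ stays in $A_1$. I would first try the device of Cruz-Uribe and Martell: write
\[
\Vert f\Vert_{L^q(\mu)}^p=\Vert f^p\Vert_{L^{q/p}(\mu)},
\]
and split $\mu$ through Hölder's inequality into factors $v_j^{p/q_j}$. Each factor is then handled in $L^{s_j}$, where $s_j>1$ and duality is available. Since this is precisely the content of \cite{Cr_Ma2018}, in the paper I would simply invoke Theorem 1.3 and Remark 1.7 there, after verifying the parameter identification above.
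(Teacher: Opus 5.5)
Your proposal is correct and takes the same route as the paper, which likewise just invokes Theorem 1.3 and Remark 1.7 of \cite{Cr_Ma2018}. In fact your identification $r_j^-=p_j$, $r_j^+=\infty$ is the one that matches the statement: it gives $A_{p_j/r_j^-}\cap RH_{(r_j^+/p_j)'}=A_1$ in the hypothesis, and a conclusion for $q_j\in(p_j,\infty)$ with $v_j\in A_{q_j/p_j}$. The literal choice $r_j^-=1$ written in the paper would instead require the hypothesis for all $w_j\in A_{p_j}$, and only an $A_1$ estimate is proved in Section \ref{sec:12}. One slip in your heuristic remarks: since $q_j>p_j$ for every $j$, we have $1/q<1/p$, so $q/p>1$ and duality in $L^{q/p}(\mu)$ is always available; the real difficulty in a direct proof is splitting the dual function into $m$ separate $A_1$ weights whose product is $\nu$, which is what \cite{Cr_Ma2018} handles.
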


\medskip

\subsection{Weighted weak Lebesgue spaces}\label{subsec:weakLp}
For a positive Borel measure $\mu$ on $\bbrn$ and a measurable function $f$ on $\bbrn$, we define the distribution function of $f$ with respect to $\mu$ by
$$d_f^\mu(\lambda):=\mu\bigl(\{x\in\mathbb R^n:|f(x)|>\lambda\}\bigr), \qquad \lambda>0.$$
For a weight $w$ on $\bbrn$, if $d\mu(x):=w(x)\,dx$, then we write
$$d_f^w(\lambda):=d_f^\mu(\lambda)=\int_{\{x\in\mathbb R^n:\,|f(x)|>\lambda\}} w(x)\,dx, \qquad \lambda>0,$$
and if $w\equiv 1$, we simply write $d_f^1(\lambda)=d_f(\lambda)$
For $0<p<\infty$, the weak Lebesgue space $L^{p,\infty}(\mu)$ consists of all measurable functions $f$ such that
$$\Vert f\Vert_ {L^{p,\infty}(\mu)} := \sup_{\lambda>0} \lambda\,\big(d_f^{\mu}(\lambda)\big)^{1/p} <\infty .$$
If $d\mu(x)=w(x)\,dx$, then we write
$$L^{p,\infty}(w):=L^{p,\infty}(\mu).$$

Concerning this space, we need to recall the following estimate. 

\begin{lemma}\label{lem:disjoint-lorentz} \cite{kk:kk}
Let $0<p<\infty$. Let $w$ be a weight on $\mathbb R^n$, and let $\{f_k\}_{k\in\bbz}$ be a family of functions whose supports are pairwise disjoint.
 Then for any $0<s\le p$,
$$ \Big\Vert\sum_{k\in\bbz} f_k\Big \Vert_{L^{p,\infty}(w)} \lesssim \bigg( \sum_{k\in\bbz}\big\Vert f_k\big\Vert_{L^{p,\infty}(w)}^s\bigg)^{1/s}$$
where the implicit constant depends only on $p,s$.
\end{lemma}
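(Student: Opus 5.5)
The plan is to compute the distribution function of the sum directly, using the disjointness of the supports, and then pass from an $\ell^p$ sum to an $\ell^s$ sum by the embedding $\ell^s\subset\ell^p$. In fact this route gives the estimate with implicit constant $1$.

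Set $F:=\sum_{k\in\bbz} f_k$. Since the supports $E_k$ of the $f_k$ are pairwise disjoint, at every point $x$ at most one term is nonzero. Hence $|F(x)|=|f_k(x)|$ for $x\in E_k$, and $F(x)=0$ off $\bigcup_k E_k$. Consequently, for every $\lambda>0$,
$$\{x:|F(x)|>\lambda\}=\bigcup_{k\in\bbz}\{x:|f_k(x)|>\lambda\},$$
and this is a disjoint union. Therefore
$$d_F^w(\lambda)=\sum_{k\in\bbz} d_{f_k}^w(\lambda)\le \lambda^{-p}\sum_{k\in\bbz}\big\Vert f_k\big\Vert_{L^{p,\infty}(w)}^p ,$$
using $d_{f_k}^w(\lambda)\le \lambda^{-p}\Vert f_k\Vert_{L^{p,\infty}(w)}^p$ from the definition in Subsection \ref{subsec:weakLp}. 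Multiplying by $\lambda^p$, taking the $1/p$-th power and then the supremum over $\lambda>0$ gives
$$\Vert F\Vert_{L^{p,\infty}(w)}\le \bigg(\sum_{k\in\bbz}\big\Vert f_k\big\Vert_{L^{p,\infty}(w)}^p\bigg)^{1/p}.$$

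Finally, for $0<s\le p$ we have the elementary embedding $\Vert a\Vert_{\ell^p}\le \Vert a\Vert_{\ell^s}$ for nonnegative sequences $a$. To see this, normalize so that $\Vert a\Vert_{\ell^s}=1$; then each $a_k\le 1$, so $a_k^p\le a_k^s$ and $\sum_k a_k^p\le 1$. Applying this to $a_k=\Vert f_k\Vert_{L^{p,\infty}(w)}$ yields the claim with constant $1$.

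There is no real obstacle. The only point requiring care is that the equality of distribution functions uses disjointness in an essential way. Without it, one would only have the usual quasi-triangle inequality in $L^{p,\infty}$, which fails to sum over infinitely many terms. The measure-theoretic justification (countable additivity of $w\,dx$ over the disjoint level sets) is routine.
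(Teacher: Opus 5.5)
Your proof is correct. Since the paper gives no argument of its own and simply quotes the lemma from \cite{kk:kk}, yours is a genuinely different, self-contained route.

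You use disjointness at the level of distribution functions, $d^w_{\sum_k f_k}=\sum_k d^w_{f_k}$. This gives the upper $p$-estimate $\Vert\sum_k f_k\Vert_{L^{p,\infty}(w)}\le(\sum_k\Vert f_k\Vert_{L^{p,\infty}(w)}^p)^{1/p}$ with constant $1$, valid for an arbitrary measure in place of $w\,dx$. You then finish with $\ell^s\subset\ell^p$. The range $s\le p$ is sharp: $N$ disjoint indicators of sets of equal $w$-measure show the estimate fails for $s>p$.

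Beyond being elementary, your route delivers the endpoint $s=p$ with constant $1$. This is precisely the case invoked in \eqref{eq:Tin-first} in the proof of Theorem \ref{bilinearweakthm}. By contrast, a convexity argument that ignores disjointness reaches only $s<p$, with a constant that blows up as $s\to p$. That argument combines $\Vert(\sum_k|f_k|^s)^{1/s}\Vert_{L^{p,\infty}(w)}^s=\Vert\sum_k|f_k|^s\Vert_{L^{p/s,\infty}(w)}$ with the normability of $L^{p/s,\infty}(w)$. At $s=p$ the non-disjoint estimate would force $L^{1,\infty}(w)$ to be normable, which it is not.

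This also shows your closing remark is a bit too pessimistic. Infinite sums can be handled without disjointness, in the form $\Vert(\sum_k|f_k|^s)^{1/s}\Vert_{L^{p,\infty}(w)}$, for every $s<p$; disjointness is exactly what delivers the endpoint.

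The citation's only advantage is to place the lemma inside the general theory of order convexity of Marcinkiewicz and Lorentz spaces, which is not needed here.
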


\medskip

\subsection{$L^p\log{L}(w)$ spaces}\label{lplogldef}
Let $\mu$ be a positive Borel measure on $\mathbb{R}^n$, and let $1\le p<\infty$. 
The space $L^p\log L(\mu)$ consists of all measurable functions $f$ such that
$$\int_{\mathbb{R}^n} \bigg( \frac{|f(x)|}{\lambda}\bigg)^{p}\log\bigg(e+\frac{|f(x)|}{\lambda}\bigg)\,d\mu(x)<\infty$$
for some $\lambda>0$.
We equip $L^p\log L(\mu)$ with the Luxemburg norm
$$ \Vert f\Vert_{L^p\log L(\mu)}= \inf\bigg\{ \lambda>0 : \int_{\bbrn}\bigg(\frac{|f(x)|}{\lambda}\bigg)^p \log \bigg(e+\frac{|f(x)|}{\lambda}\bigg) \,d\mu(x)\le 1 \bigg\}.$$
For a weight $w$, when $d\mu(x)=w(x) dx$, we write $L^p\log{L}(w)=L^p\log{L}(\mu)$ and it is well known, see for instance \cite{Be_Sh1988}, that
\begin{equation}\label{lloglcha}
\Vert f\Vert_ {L^p\log L(w)} \sim \bigg(\int_0^{\infty} \big(f^*_w(t) \big)^p\log\Big(e+\frac{1}{t} \Big)\, dt \bigg)^{1/p},
\end{equation}
where 
\begin{equation}\label{derearrangedef}
f_w^{*}(t):=\inf\big\{s\ge 0 : d_f^{w}(s)\le t \big\}, \quad t>0
\end{equation}
is the decreasing rearrangement of $f$ with respect to the weight $w$.

\hfill

\section{Key Estimates}\label{sec:4}

 We define a Hardy-type radial operator $\HH$
\begin{equation}\label{hardytype}
\HH f(x):=\int_{\bbrn}\min\bigg\{\frac{1}{|x|^n},\frac{1}{|y|^n}\bigg\} f(y)\,dy,
\qquad x\in \bbrn\setminus\{0\}.
\end{equation}
Clearly,
$$\HH f(x)=\UU f(x)+\VV f(x)$$
where $\UU$ is the Hardy averaging operator as in \eqref{uufxdef} and 
$$\VV f(x):=\int_{|y|>|x|}\frac{|f(y)|}{|y|^n}\,dy.$$
Since $\UU$ is pointwise controlled by the Hardy--Littlewood maximal operator, it follows immediately, using duality to estimate $\VV$, that for $1<p<\infty$ and $w\in A_p$,
\begin{equation}\label{hhlpwest}
\big\Vert \HH f\big\Vert_{L^p(w)}\lesssim \Vert f\Vert_{L^p(w)}.
\end{equation}

 The following proposition is an extension of \eqref{hhlpwest} to Herz spaces, which is a key estimate in the proof of Theorem \ref{Herzextrapol}. The result follows by Rubio de Francia's extrapolation theorem at least whenever $q\ge 1$. However, we include a direct proof. 
 
 \begin{proposition}\label{keylemHbdWi}
Let $1<p<\infty$ and $0<q\le \infty$. 
Assume that $w\in A_p$.
Then we have
$$\Vert \HH f\Vert_{\dot{K}_p^{0,q}(w)} \lesssim \Vert f\Vert_{\dot{K}_p^{0,q}(w)}.$$
\end{proposition}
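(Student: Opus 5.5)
Write $f_k:=\chi_{I_k}f$ and split $\HH f$ on each annulus $I_k$ according to where the mass of $f$ lives: annuli close to $I_k$, annuli well inside, and annuli well outside. For $x\in I_k$ we have $|x|\sim 2^k$, and
$$\chi_{I_k}\HH f(x)\le \chi_{I_k}\HH\Big(\sum_{|j-k|\le 1}|f_j|\Big)(x)+ C\sum_{j\le k-2} 2^{-kn}\Vert f_j\Vert_{L^1}+ C\sum_{j\ge k+2}2^{-jn}\Vert f_j\Vert_{L^1}.$$
The local term is handled by \eqref{hhlpwest}:
$$\Big\Vert \chi_{I_k}\HH\Big(\sum_{|j-k|\le1}|f_j|\Big)\Big\Vert_{L^p(w)}\lesssim \sum_{|j-k|\le 1}\Vert f_j\Vert_{L^p(w)}.$$
Its $\ell^q$ norm over $k$ is therefore controlled by $\Vert f\Vert_{\dot K^{0,q}_p(w)}$ for every $0<q\le\infty$, since only three shifts occur.

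For the two tail terms we use Hölder and the $A_p$ condition. Let $B_j:=B(0,2^j)$ and $\sigma:=w^{-1/(p-1)}$. Then
$$\Vert f_j\Vert_{L^1}\le \Vert f_j\Vert_{L^p(w)}\,\sigma(B_j)^{1/p'},$$
while the $A_p$ condition on $B_j$ gives $\sigma(B_j)^{1/p'}\lesssim |B_j|\,w(B_j)^{-1/p}$. A constant $c$ on $I_k$ has $L^p(w)$ norm at most $c\,w(B_k)^{1/p}$. Both tails therefore reduce to comparing $w(B_k)$ with $w(B_j)$.

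For $j\le k-2$ the contribution is
$$\sum_{j\le k-2}2^{(j-k)n}\Big(\frac{w(B_k)}{w(B_j)}\Big)^{1/p}\Vert f_j\Vert_{L^p(w)}.$$
Because $w\in A_p$, we have $w(B_k)/w(B_j)\lesssim (|B_k|/|B_j|)^{p}$. By Lemma \ref{weightopen}, $w\in A_{p-\epsilon}$ for some $\epsilon>0$, which sharpens this to $w(B_k)/w(B_j)\lesssim 2^{(k-j)n(p-\epsilon)}$. The coefficient is then at most $2^{-(k-j)n\epsilon/p}$.

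For $j\ge k+2$ the contribution is
$$\sum_{j\ge k+2}\Big(\frac{w(B_k)}{w(B_j)}\Big)^{1/p}\Vert f_j\Vert_{L^p(w)}.$$
Here \eqref{ainftycha} gives $w(B_k)/w(B_j)\lesssim 2^{-(j-k)n\delta}$, so the coefficient is at most $2^{-(k-j)\cdot(-n\delta/p)}$, again geometric in $|j-k|$.

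This leaves a discrete convolution $a_k=\sum_j 2^{-\eta|j-k|}b_j$ with $b_j=\Vert f_j\Vert_{L^p(w)}$ and some $\eta>0$. For $q\ge1$, Young's inequality on $\ell^q(\bbz)$ bounds $\Vert (a_k)\Vert_{\ell^q}\lesssim \Vert (b_j)\Vert_{\ell^q}$. For $q<1$, use $(\sum_j c_j)^q\le\sum_j c_j^q$ and sum the resulting geometric series in $k$; the case $q=\infty$ is immediate.

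The main obstacle is the inner tail: it needs a strict exponent gain, which comes from the openness property (Lemma \ref{weightopen}). The outer tail's decay comes from \eqref{ainftycha}. Everything else is routine bookkeeping.
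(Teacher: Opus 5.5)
Your proof is correct and follows the paper's argument. Both reduce to the annular bound $\Vert \chi_{I_k}\HH f\Vert_{L^p(w)}\lesssim\sum_j 2^{-\epsilon|j-k|}\Vert\chi_{I_j}f\Vert_{L^p(w)}$ using the pointwise bound $2^{-n\max\{k,j\}}\Vert\chi_{I_j}f\Vert_{L^1}$, H\"older with $\sigma=w^{-1/(p-1)}$, and the $A_p$ condition on dyadic balls, and then sum a geometric discrete convolution in $\ell^q$; your outer tail is identical to the paper's. The only differences are minor. You treat $|j-k|\le 1$ via \eqref{hhlpwest}, whereas the paper's uniform estimate already covers those terms. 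For $j<k$ you apply $A_p$ on the small ball $B(0,2^j)$ and get the decay from openness ($w\in A_{p-\epsilon}$), whereas the paper applies $A_p$ on the large ball $B(0,2^k)$ and gets it from the $A_\infty$ decay \eqref{Ainfty} of the dual weight $\sigma$; both rest on the same reverse H\"older self-improvement.
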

\begin{proof}
We shall prove that there exists $\epsilon>0$ such that for each $k\in\bbz$
\begin{equation}\label{reducedclaim}
\big\Vert \chi_{I_k} \HH f\big\Vert_{L^p(w)}\lesssim \sum_{j\in\bbz}2^{-\epsilon |k-j|}\Vert \chi_{I_j}f\Vert_{L^p(w)} 
\end{equation}
where the constant in the inequality is independent of $k$.
Then we can conclude that
\begin{align*}
\Vert \HH f\Vert_{\dot{K}_p^{0,q}(w)}&=\Big\Vert \big\{ \Vert \chi_{I_k} \HH f \Vert_{L^p(w)} \big\}_{k\in\bbz}\Big\Vert_{\ell^q}\\
&\lesssim \Big\Vert \big\{ \Vert \chi_{I_j} f \Vert_{L^p(w)} \big\}_{j\in\bbz}\Big\Vert_{\ell^q} =\Vert f\Vert_{\dot{K}_p^{0,q}(w)}.
\end{align*}
where we applied H\"older's inequality if $q>1$ or the embedding $\ell^q\hookrightarrow \ell^1$ if $q\le 1$.\\

Now let us prove \eqref{reducedclaim}. 
For each $k\in\mathbb Z$, let $B_{2^k}:=B(0,2^k)$ denote the ball of radius $2^k$, centered at the origin, so that $I_k\subset B_{2^k}$.
For $x\in I_k$,
$$\HH f(x)\le \sum_{j\in\mathbb Z}\int_{I_j} \min\bigg\{ \frac1{|x|^n},\frac1{|y|^n}\bigg\}|f(y)|\,dy \lesssim \sum_{j\in\bbz} 2^{-n\max\{k,j\}}\Vert \chi_{I_j} f\Vert_{L^1}.$$
This yields that
\begin{equation}\label{eq:first-bk-estimate}
\big\Vert \chi_{I_k} \HH f\big\Vert_{L^p(w)} \lesssim \sum_{j\in\mathbb Z}2^{-n\max\{k,j\}}w(I_k)^{{1}/{p}} \Vert \chi_{I_j} f\Vert_{L^1}.
\end{equation}
Setting  $\sigma:=w^{-\frac1{p-1}}$ and applying Hölder's inequality,  for each $j\in\bbz$,
$$\Vert \chi_{I_j} f\Vert_{L^1}\le \big\Vert \chi_{I_j}f\big\Vert_{L^p(w)} \sigma(I_j)^{{1}/{p'}}.$$
Substituting this into \eqref{eq:first-bk-estimate} gives
\begin{equation*}
\big\Vert \chi_{I_k} \HH f\big\Vert_{L^p(w)} \lesssim \sum_{j\in\mathbb Z}2^{-n\max\{k,j\}}w(I_k)^{{1}/{p}}\sigma(I_j)^{{1}/{p'}}\big\Vert \chi_{I_j}f\big\Vert_{L^p(w)}.
\end{equation*}
Since $w\in A_p$, we have $\sigma\in A_{p'}$, and both $w$ and $\sigma$ belong to $A_\infty$, and hence, in view of \eqref{ainftycha}, we can take constants $\delta_w,\delta_\sigma>0$ and $C_w,C_\sigma>0$ such that for all concentric balls $B'\subset B$,
\begin{equation}\label{Ainfty}
\frac{w(B')}{w(B)} \le C_w\bigg(\frac{|B'|}{|B|}\bigg)^{\delta_w}, \qquad \frac{\sigma(B')}{\sigma(B)}\le C_\sigma\bigg(\frac{|B'|}{|B|}\bigg)^{\delta_\sigma}.
\end{equation}
In addition, the $A_p$ condition implies
\begin{equation}\label{Apball}
w(B_{2^l})^{{1}/{p}}\sigma(B_{2^l})^{{1}/{p'}}\le [w]_{A_p}^{{1}/{p}}|B_{2^l}| \sim 2^{nl}, \quad l\in\bbz.
\end{equation}

If $j\le k$, then using \eqref{Ainfty} and \eqref{Apball},
$$2^{-n\max\{k,j\}}w(I_k)^{{1}/{p}}\sigma(I_j)^{{1}/{p'}}\lesssim 2^{-nk} w(B_{2^k})^{{1}/{p}}\sigma(B_{2^k})^{{1}/{p'}}2^{-\delta_{\sigma}n(k-j)/{p'}}\sim 2^{-\delta_{\sigma}n(k-j)/{p'}}.$$
Similarly, if $j>k$, then
$$2^{-n\max\{k,j\}}w(I_k)^{{1}/{p}}\sigma(I_j)^{{1}/{p'}}\lesssim 2^{-\delta_{w}n(j-k)/{p}}.$$
Therefore, by taking
$$ \epsilon:=n \min\bigg\{\frac{\delta_\sigma}{p'},\frac{\delta_w}{p}\bigg\}>0,$$
the claim \eqref{reducedclaim} follows.
\end{proof}

\medskip

\begin{lemma}\label{utfl1uest}
There exists a constant $C>0$ such that for every $t>0$ and every weight $w \in A_1$,
\begin{equation}\label{extlemest2}
\big\Vert \chi_{B_t} Mf \big\Vert_{L^1(w)}\le C [w]_{A_1}\max\big\{1,w(B_t)\big\} \Vert f\Vert_{L\log L(w)}.
\end{equation}
\end{lemma}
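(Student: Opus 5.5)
We work with the weighted measure $w\,dx$ and the weighted rearrangement $f^*_w$ from \eqref{derearrangedef}. The idea is to bound the integral by the distribution function of $Mf$ and then control that distribution function by the weak $(1,1)$ estimate.

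First we write
$$\big\Vert \chi_{B_t} Mf\big\Vert_{L^1(w)}=\int_0^\infty w\big(\{x\in B_t: Mf(x)>\lambda\}\big)\,d\lambda .$$
The sets $\{Mf>\lambda\}\cap B_t$ have $w$-measure at most $\min\{w(B_t), d^w_{Mf}(\lambda)\}$. This truncation by $w(B_t)$ is what produces the factor $\max\{1,w(B_t)\}$.

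Next we estimate $d^w_{Mf}(\lambda)$ by a refined weak-type bound. We split $f=f\chi_{\{|f|>\lambda/2\}}+f\chi_{\{|f|\le\lambda/2\}}$. Since $M$ is sublinear and $\Vert f\chi_{\{|f|\le \lambda/2\}}\Vert_\infty\le\lambda/2$, we get $\{Mf>\lambda\}\subset\{M(f\chi_{\{|f|>\lambda/2\}})>\lambda/2\}$. The weighted weak $(1,1)$ bound for $w\in A_1$ then gives
$$d^w_{Mf}(\lambda)\le \frac{C[w]_{A_1}}{\lambda}\int_{\{|f|>\lambda/2\}}|f|\,w\,dx .$$
The linear dependence on $[w]_{A_1}$ comes from the standard Vitali covering argument, which uses $\frac{1}{|B|}\int_B w\le [w]_{A_1}\inf_B w$.

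Then we insert this bound and apply Fubini. It is convenient to reduce to $\Vert f\Vert_{L\log L(w)}=1$ by homogeneity, although the $\max$ factor does not scale. Since the right side of \eqref{extlemest2} is homogeneous of degree one in $f$, we may assume $\Vert f\Vert_{L\log L(w)}=1$, so that $\int |f|\log(e+|f|)\,w\le 1$.

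We split the $\lambda$-integral at $\lambda=1$:
$$\int_0^1 \min\{w(B_t),d^w_{Mf}(\lambda)\}\,d\lambda\le w(B_t).$$
For $\lambda>1$ we use the refined weak bound together with Fubini:
$$\int_1^\infty\frac{C[w]_{A_1}}{\lambda}\int_{\{|f|>\lambda/2\}}|f|w\,dx\,d\lambda= C[w]_{A_1}\int |f|\log^+(2|f|)\,w\,dx\lesssim [w]_{A_1}.$$
Adding the two pieces, the total is bounded by $C[w]_{A_1}\max\{1,w(B_t)\}$, since $[w]_{A_1}\ge1$. This gives \eqref{extlemest2}.

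The main obstacle is the homogeneity bookkeeping. The estimate is not dilation-invariant in $f$, because it mixes the norm with $w(B_t)$. The normalization $\Vert f\Vert=1$ plus a split at $\lambda=1$ handles this. For general $f$, we apply the bound to $f/\Vert f\Vert$ and multiply back; this is legitimate because the final inequality is linear in $\Vert f\Vert$. A secondary check is that the weighted weak $(1,1)$ constant is indeed linear in $[w]_{A_1}$, which the standard proof gives. Alternatively, one could use \eqref{lloglcha} and the Herz-type rearrangement inequality $(Mf)^*_w(s)\lesssim[w]_{A_1}\frac1s\int_0^s f^*_w$. In that case we would integrate over $s\le w(B_t)$ and bound $\int_0^{a}\frac1s\int_0^s f^*_w=\int_0^a f^*_w(r)\log(a/r)\,dr$ by Hölder against the $L\log L$ norm.
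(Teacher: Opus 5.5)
Your proof is correct, and it takes a different route from the paper's.

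The paper works entirely with rearrangements. It uses the Hardy--Littlewood inequality $\int_{B_t}Mf\,w\le\int_0^{w(B_t)}(Mf)^*_w(s)\,ds$. Monotonicity of $(Mf)^*_w$ then replaces $\int_0^{w(B_t)}$ by $\max\{1,w(B_t)\}\int_0^1$. Next comes the weighted Herz inequality $(Mf)^*_w(s)\lesssim[w]_{A_1}\frac1s\int_0^s f^*_w$. Finally, the characterization \eqref{lloglcha} bounds $\int_0^1 f^*_w(r)\log(1/r)\,dr$.

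You instead use the layer-cake formula and cap the distribution function by $w(B_t)$ for $\lambda\le 1$. For $\lambda>1$ you use the refined weak-type bound $w(\{Mf>\lambda\})\le \frac{C[w]_{A_1}}{\lambda}\int_{\{|f|>\lambda/2\}}|f|\,w$, and after Fubini this lands directly on the Luxemburg modular.

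Both routes rest on the same ingredient, since Herz's inequality is itself proved from exactly this refined weak-type estimate. Your version is self-contained: it needs only the weighted weak $(1,1)$ bound with linear dependence on $[w]_{A_1}$ (your Vitali sketch gives this) and the definition of the Luxemburg norm. It avoids the rearrangement equivalence \eqref{lloglcha} altogether. The paper's version is shorter once those tools are taken for granted.

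Three small points to tidy up. First, your normalization $\Vert f\Vert_{L\log L(w)}=1$ is harmless precisely because both sides of \eqref{extlemest2} are homogeneous of degree one in $f$; the factor $\max\{1,w(B_t)\}$ does not involve $f$. The only non-scale-invariant choice is the cut at $\lambda=1$, which is tied to that normalization, so your remark that the estimate ``is not dilation-invariant'' is misleading. Second, say explicitly that $\int|f|\log(e+|f|)\,w\le 1$ follows from the Luxemburg definition by monotone convergence. Third, record that $\log^+(2|f|)\le(1+\log 2)\log(e+|f|)$, since this is what makes $\int|f|\log^+(2|f|)\,w$ bounded by an absolute constant.
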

\begin{proof}
By definition of the decreasing rearrangement in \eqref{derearrangedef},
$$\int_{B_t} \big|Mf(x)\big| w(x)\,dx \le \int_0^{w(B_t)} \big(Mf\big)^*_w(s)\,ds. $$
We first claim that
\begin{equation}\label{eq:Mt-reduction}
\int_0^{w(B_t)} \big(Mf\big)^*_w(s)\,ds
\le \max\big\{1,w(B_t)\big\}\int_0^1 \big(Mf\big)^*_w(s)\,ds.
\end{equation}
Indeed, if $w(B_t)\le 1$, then the estimate is immediate. 
If $w(B_t)>1$, since $(Mf)^*_w$ is nonincreasing, we have
$$ \frac1{w(B_t)}\int_0^{w(B_t)} \big(Mf\big)^*_w(s)\,ds \le \int_0^1 \big(Mf\big)^*_w(s)\,ds, $$
which yields \eqref{eq:Mt-reduction}.

Next, we use the standard rearrangement estimate for the Hardy--Littlewood maximal operator with respect to an $A_1$ weight:
$$ \big(Mf\big)^*_w(s)\lesssim [w]_{A_1}\,\frac{1}{s} \int_0^s  f^*_w(r) \,dr, \qquad s>0. $$
Therefore,
$$ \int_0^1 \big(Mf\big)^*_w(s)\,ds \lesssim [w]_{A_1}\int_0^1 \frac1s\int_0^s f^*_w(r)\,dr\,ds. $$
By Fubini's theorem and \eqref{lloglcha},
$$ \int_0^1 \frac1s\int_0^s f^*_w(r)\,dr\,ds=\int_0^1 f^*_w(r)\log{\frac{1}{r}}\, dr \lesssim \Vert f\Vert_ {L\log L(w)}.$$
Combining the above estimates, we obtain
$$ \int_{B_t} \big|Mf(x)\big|\,w(x)\,dx \lesssim [w]_{A_1}\max\big\{1,w(B_t)\big\}\,\Vert f\Vert_ {L\log L(w)}, $$
which proves \eqref{extlemest2}.
\end{proof}

\begin{lemma} \label{rdrd} 
If $w$ is a radially nonincreasing weight in $A_1$, then $w$ satisfies condition \eqref{consondyad}.
\end{lemma}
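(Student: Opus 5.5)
Since $w$ is radially nonincreasing, write $w(x)=w_0(|x|)$ with $w_0$ nonincreasing. On the annulus $I_k$ the supremum is attained near the inner radius and the infimum on $I_k^*$ near the outer radius:
$$\esssup_{I_k} w\le w_0(2^{k-1}), \qquad \essinf_{I_k^*} w\ge w_0(2^{k+1}).$$
It therefore suffices to show that $w_0(2^{k-1})\lesssim w_0(2^{k+1})$ uniformly in $k$. To make the pointwise values meaningful, I would work with essential suprema and infima over thin shells, or simply with the values of a nonincreasing representative.

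The key step is to use the $A_1$ condition on the ball $B=B_{2^{k+2}}$ (or any ball containing both shells). First, by monotonicity, $w\ge w_0(2^{k-1})$ on $B_{2^{k-1}}$, so
$$\frac1{|B|}\int_B w\ge \frac{|B_{2^{k-1}}|}{|B|}\,w_0(2^{k-1})=2^{-3n}w_0(2^{k-1}).$$
Second, the $A_1$ condition gives
$$\frac1{|B|}\int_B w\le [w]_{A_1}\essinf_B w\le [w]_{A_1}\,w_0(2^{k+1}),$$
since the shell near radius $2^{k+1}$ lies inside $B$. Combining the two inequalities yields
$$w_0(2^{k-1})\le 2^{3n}[w]_{A_1}\,w_0(2^{k+1}),$$
which is \eqref{consondyad} with $C=2^{3n}[w]_{A_1}$.

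The only delicate point is the essential sup/inf bookkeeping, since $w$ is defined only a.e. I would handle it as follows. Note $\esssup_{I_k}w\le \essinf_{\{2^{k-2}<|x|<2^{k-1}\}}w$, by monotonicity along rays together with the radial structure. Then apply the averaging argument above with $B_{2^{k-1}}$ replaced by the set where $w$ exceeds that value. Alternatively, use the equivalent characterization \eqref{a1character}, $Mw\lesssim w$ a.e.: for a.e. $x\in I_k^*$, the ball $B(0,2|x|)\supset B_{2^{k-1}}$ contains $x$, and averaging over it gives
$$w(x)\gtrsim Mw(x)\gtrsim 2^{-cn}\,\esssup_{I_k}w.$$
I expect this a.e. version via $Mw\lesssim w$ to be the cleanest route.
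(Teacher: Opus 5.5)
Your proof is correct and is essentially the paper's argument. The paper shows $w(x)\lesssim |x|^{-n}w(B(0,|x|))\lesssim Mw(y)\le [w]_{A_1}w(y)$ for a.e.\ $x\in I_k$, $y\in I_k^*$, using $B(0,|x|)\subset B(0,4|y|)$; this is your alternative route via $Mw\lesssim w$, and your fixed-ball argument on $B_{2^{k+2}}$ is the same mechanism stated through the essential-infimum form of $A_1$, giving the constant $2^{3n}[w]_{A_1}$.
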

\begin{proof}
Fix $k\in\bbz$, and let $x\in I_k$ and $y\in I_k^*$. 
Then $|x|\sim |y|$ and $B(0,|x|)\subset B(0,4|y|)$.
Since $w$ is radially nonincreasing,   
$$ w(x) \lesssim \frac{w(B(0, |x|))}{|x|^n}, $$
and thus
\begin{align*}
w(x)&\lesssim \frac{1}{|x|^n}\int_{B(0,|x|)} w(z)\,dz \lesssim \frac{1}{|y|^n}\int_{B(0,4|y|)} w(z)\,dz\lesssim Mw(y).
\end{align*}
Since $w\in A_1$, we have
$$ Mw(y)\le [w]_{A_1}\,w(y) \quad \text{ for a.e. $y\in\bbrn$},$$
which yields
$$ w(x)\lesssim [w]_{A_1}\, w(y).
$$
Since this holds for a.e. $x\in I_k$ and for a.e. $y\in I_k^*$, we conclude that  $w$ satisfies \eqref{consondyad}.
\end{proof}

\begin{lemma} \label{ca1} 
If $w\in A_1$, then  $\HH w$ is a radially nonincreasing weight in  $A_1$ whenever $\HH w(x)<\infty$ at almost every point. 
Moreover
$$ [\HH w]_{A_1}\lesssim [w]_{A_1}.$$
\end{lemma}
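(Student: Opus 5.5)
The plan is to prove radial monotonicity from the kernel, and then the $A_1$ inequality $M(\HH w)(x)\lesssim [w]_{A_1}\HH w(x)$ directly by averaging $\HH w$ over balls. The Muckenhoupt condition will be used exactly once, through the pointwise bound $\UU w\lesssim Mw\le [w]_{A_1}w$; this is what keeps the dependence on $[w]_{A_1}$ linear. Throughout, $\UU$ is the averaging operator \eqref{uufxdef} and $\VV$ is the tail operator of Section \ref{sec:4}.

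\emph{Step 1: monotonicity.} For fixed $y$, the kernel $\min\{|x|^{-n},|y|^{-n}\}$ is radial in $x$ and nonincreasing in $|x|$. Since $w\ge 0$, the function $\HH w$ is therefore radial and radially nonincreasing. It is a weight as soon as it is finite a.e., which is assumed. One useful lower bound follows by keeping only $y\in B_R$ with $|x|\le R$, where the kernel is at least $R^{-n}$:
\begin{equation*}
\HH w(x)\ \ge\ R^{-n}\,w(B_R), \qquad |x|\le R. \tag{$\ast$}
\end{equation*}

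\emph{Step 2: small balls.} Fix $x\neq 0$ and a ball $B\ni x$ of radius $\rho$, and suppose first that $\rho<|x|/4$. Then every $z\in B$ satisfies $|z|\ge |x|/2$, so monotonicity gives $\frac1{|B|}\int_B\HH w\le \HH w(x/2)$. We have $\HH w(x/2)=\UU w(x/2)+\VV w(x/2)$. On the set $|x|/2<|y|\le |x|$ the two kernels $|x/2|^{-n}$ and $|y|^{-n}$ are comparable, so, up to $2^n$, both $\UU w(x/2)$ and $\VV w(x/2)$ are dominated by $|x|^{-n}w(B_{|x|})+\VV w(x)\le 2\,\HH w(x)$. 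Hence the average of $\HH w$ over $B$ is $\lesssim \HH w(x)$, with no use of the $A_1$ condition.

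\emph{Step 3: large balls.} Now suppose $\rho\ge |x|/4$. Then $B\subset B_R$ with $R:=6\rho$, and $|B_R|\sim|B|$, so it suffices to bound $\frac1{|B_R|}\int_{B_R}\HH w$. By Fubini,
\[
\int_{B_R}\HH w(z)\,dz=\int_{\bbrn}w(y)\int_{B_R}\min\{|z|^{-n},|y|^{-n}\}\,dz\,dy .
\]
The inner integral equals $|B_R|\,|y|^{-n}$ when $|y|\ge R$. When $|y|<R$ it equals $c_n\bigl(1+\log(R/|y|)\bigr)$; here we split the $z$-integral into $|z|<|y|$ and $|y|\le|z|<R$. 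This yields three terms:
\begin{itemize}
\item the region $|y|\ge R$ contributes $|B_R|\,\VV w(R)$;
\item the constant part on $|y|<R$ contributes $c\,w(B_R)$;
\item the logarithmic part contributes $c\int_{B_R}w(y)\log(R/|y|)\,dy$.
\end{itemize}
Applying Fubini in the other direction, the logarithmic term equals $c\int_{B_R}\UU w(z)\,dz$. Since $\UU w\lesssim Mw\le [w]_{A_1}w$ a.e., it is at most $C[w]_{A_1}w(B_R)$. Dividing by $|B_R|$ gives
\[
\frac1{|B_R|}\int_{B_R}\HH w\ \lesssim\ \VV w(R)+[w]_{A_1}\,\frac{w(B_R)}{R^n}.
\]
Since $|x|\le R$, we have $\VV w(R)\le \VV w(x)\le\HH w(x)$ and, by $(\ast)$, $R^{-n}w(B_R)\le\HH w(x)$. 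Taking the supremum over all balls containing $x$ then gives $M(\HH w)(x)\lesssim[w]_{A_1}\HH w(x)$, so $[\HH w]_{A_1}\lesssim [w]_{A_1}$ by \eqref{a1character}.

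The main obstacle is Step 3. A naive approach bounds $M$ of each slice $\chi_{B_r}\,w(B_r)/|B_r|$ separately and then integrates in $r$. That loses the logarithm and forces an $A_\infty$ decay argument, which destroys the linear dependence on the constant. The fix is to identify the logarithmic factor with $\int_{B_R}\UU w$, so that the $A_1$ condition enters only once, through $Mw\le[w]_{A_1}w$.
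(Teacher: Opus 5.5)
Your proof is correct and follows essentially the paper's argument: the same Fubini computation of a centered-ball average of $\HH w$, split into three pieces, with the logarithmic piece recognized as an average of $\UU w$ and controlled through $\UU w\lesssim Mw\le [w]_{A_1}w$. The only difference is that you explicitly justify the reduction from arbitrary balls to centered ones, via your small-ball/large-ball split and the bound $\HH w(x/2)\lesssim \HH w(x)$; the paper dismisses this step in one line with ``clearly'', citing radial monotonicity.
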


\begin{proof}
Clearly $\HH w$ is radially nonincreasing and hence
\begin{align*}
M\big(\HH w\big)(x)&\lesssim \frac1{|x|^n}\int_{|y|\le |x|} \HH w(y)\,dy\\
&=\frac1{|x|^n}\int_{\bbrn} w(t) \int_{|y|\le |x|}\min\bigg\{\frac1{|y|^n},\frac1{|t|^n}\bigg\}\,dy dt.
\end{align*}
We split the inner integral according to whether $|t|\le |x|$ or $|t|>|x|$. Then the last expression is bounded by
\begin{align*}
\frac1{|x|^n}\int_{|t|\le |x|} w(t)\,dt  + \frac1{|x|^n}\int_{|y|\le |x|}\bigg(\frac1{|y|^n}\int_{|t|\le |y|} w(t)\,dt\bigg)dy + \int_{|t|>|x|}\frac{w(t)}{|t|^n}\,dt .
\end{align*}
The sum of the first and third terms is exactly  $\HH w(x)$.
Moreover, since
$$\frac1{|y|^n}\int_{|t|\le |y|} w(t)\,dt\lesssim Mw(y)\lesssim [w]_{A_1}w(y) \quad \text{ for a.e. $y\in\bbrn$},$$
we have
\begin{align*}
  \frac1{|x|^n}\int_{|y|\le |x|}\bigg(\frac1{|y|^n}\int_{|t|\le |y|} w(t)\,dt\bigg)dy &\lesssim [w]_{A_1}\frac1{|x|^n}\int_{|y|\le |x|} w(y)\,dy\le [w]_{A_1}\HH w(x).
\end{align*}
Combining the above estimates, we conclude that
$$M\big(\HH w\big)(x) \lesssim \HH w(x)+[w]_{A_1}\HH w(x)\lesssim [w]_{A_1}\HH w(x),$$
since $[w]_{A_1}\ge 1$. 
This completes the proof.
\end{proof}

\begin{lemma}\label{lem:weighthardyl1w}
Let $1\le p<\infty$ and $w\in A_p$. Then  there exists a constant $C>0$ such that for every nonnegative measurable function $f$ and for every
$R>0$,
$$ \HH f(x)\le C \frac{[w]_{A_p}^{1/p}}{w(B_R)^{1/p}}\Vert f\Vert_{L^p(w)} \qquad \text{whenever } |x|=R.$$
\end{lemma}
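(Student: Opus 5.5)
Fix $R>0$ and $x$ with $|x|=R$. We split $\HH f(x)=\UU f(x)+\VV f(x)$ (for nonnegative $f$) and bound each piece separately by $[w]_{A_p}^{1/p}w(B_R)^{-1/p}\Vert f\Vert_{L^p(w)}$. We treat $1<p<\infty$ first; $p=1$ is the degenerate case with $\sigma^{1/p'}$ replaced by $(\essinf w)^{-1}$.

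\emph{The local part $\UU f(x)$.} Since $\UU f(x)=R^{-n}\int_{B_R}f$, Hölder's inequality with $\sigma=w^{-1/(p-1)}$ gives
$$\UU f(x)\le R^{-n}\Vert f\Vert_{L^p(w)}\sigma(B_R)^{1/p'}.$$
The $A_p$ condition on $B_R$, in the form \eqref{Apball}, gives $\sigma(B_R)^{1/p'}\le [w]_{A_p}^{1/p}|B_R|\,w(B_R)^{-1/p}$. This yields the claim for $\UU f$. For $p=1$ we instead use $\int_{B_R}f\le (\essinf_{B_R}w)^{-1}\int_{B_R} fw$ together with $\essinf_{B_R}w\ge [w]_{A_1}^{-1}w(B_R)/|B_R|$.

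\emph{The tail part $\VV f(x)$.} Decompose $\{|y|>R\}$ into the dyadic shells $A_l=\{2^{l-1}R<|y|\le 2^lR\}$, $l\ge1$. On $A_l$ we have $|y|^{-n}\lesssim (2^lR)^{-n}$, so by the same Hölder/$A_p$ argument on the ball $B_{2^lR}$,
$$\int_{A_l}\frac{f(y)}{|y|^n}\,dy\lesssim [w]_{A_p}^{1/p}\,w(B_{2^lR})^{-1/p}\Vert f\Vert_{L^p(w)}.$$
Summing over $l$ requires $\sum_{l\ge1}w(B_{2^lR})^{-1/p}\lesssim w(B_R)^{-1/p}$, that is, geometric growth of $w(B_{2^lR})$ in $l$.

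\emph{The main obstacle} is this summation, which requires a reverse doubling property of $A_p$ weights. We plan to derive it from the $A_p$ condition: for $E\subset B$ one has $(|E|/|B|)^p\le [w]_{A_p}\,w(E)/w(B)$. Taking $B=B_{2^{l}R}$ and $E=B\setminus B_{2^{l-1}R}$, the ratio $|E|/|B|=1-2^{-n}$ is fixed, so $w(B_{2^lR}\setminus B_{2^{l-1}R})\ge c\,w(B_{2^lR})$ with $c=c([w]_{A_p})>0$. Hence $w(B_{2^{l-1}R})\le(1-c)\,w(B_{2^lR})$, and iterating gives $w(B_{2^lR})\ge(1-c)^{-l}w(B_R)$. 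The series is then geometric.

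This makes the constant depend on $[w]_{A_p}$ beyond the displayed power, which is allowed since $C$ may depend on $w$. A cleaner alternative is to use \eqref{ainftycha} for $w$ directly. Adding the bounds for the two parts gives the lemma.
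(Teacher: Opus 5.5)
Your proof is correct and follows essentially the same route as the paper: H\"older plus the $A_p$ condition on $B_R$ for the local part, dyadic shells $\{2^{l-1}R<|y|\le 2^lR\}$ for the tail, and geometric growth of $w(B_{2^lR})$ to sum the tail. The differences are minor: you derive the reverse doubling directly from $(|E|/|B|)^p\le[w]_{A_p}\,w(E)/w(B)$ where the paper cites the $A_\infty$ property \eqref{ainftycha}, and for $p=1$ the paper bounds the tail in one step via $w(B_R)|y|^{-n}\lesssim Mw(y)\lesssim[w]_{A_1}w(y)$ instead of summing over shells.
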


\begin{proof}
Fix $R>0$ and assume $|x|=R$. 
We write, by definition,
$$ \HH f(x) = \frac{1}{R^n}\int_{|y|\le R} f(y)\,dy + \int_{|y|>R}\frac{f(y)}{|y|^n}\,dy$$
and estimate the two terms separately.

{\bf Case 1.} $p=1$.

For the local part, since $w\in A_1$,
$$ \frac{w(B_R)}{|B_R|} \le [w]_{A_1}\essinf_{B_R}w.$$
Hence
\begin{align*}
\frac{1}{R^n}\int_{|y|\le R} f(y)\,dy & \lesssim \frac{[w]_{A_1}}{w(B_R)} \essinf_{B_R}w \int_{B_R} f(y)\,dy\\
&\le \frac{[w]_{A_1}}{w(B_R)} \int_{B_R} f(y)w(y)\,dy \le \frac{[w]_{A_1}}{w(B_R)}\Vert f\Vert_{L^1(w)}.
\end{align*}

For the outer part, if $|y|>R$, then $B_R\subset B(y,2|y|)$. Therefore
$$ \frac{w(B_R)}{|y|^n} \lesssim \frac{1}{|B(y,2|y|)|}\int_{B(y,2|y|)}w(z)\,dz \le Mw(y) \lesssim [w]_{A_1}w(y)$$
for a.e. $y$. Thus
$$\int_{|y|>R}\frac{f(y)}{|y|^n}\,dy \lesssim \frac{[w]_{A_1}}{w(B_R)}\int_{|y|>R} f(y)w(y)\,dy\le  \frac{[w]_{A_1}}{w(B_R)}\Vert f\Vert_{L^1(w)}.$$

\medskip

{\bf Case 2.} $1<p<\infty$.
Let $\sigma:=w^{-\frac{1}{p-1}}$ so that for any ball $B$ in $\bbrn$,
\begin{equation}\label{eq:apconswsi}
\frac{w(B)}{|B|}\bigg(\frac{\sigma(B)}{|B|} \bigg)^{p-1}\le [w]_{A_p} \quad \Longrightarrow \quad \sigma(B)^{\frac{p-1}{p}}\le [w]_{A_p}^{1/p}\frac{|B|}{w(B)^{1/p}}.
\end{equation}

For the local part, by H\"older's inequality and \eqref{eq:apconswsi},
$$\frac1{R^n}\int_{|y|\le R}f(y)\,dy \le \frac1{R^n}\|f\|_{L^p(w)}\sigma(B_R)^{\frac{p-1}{p}}  \lesssim \frac{[w]_{A_p}^{1/p}}{w(B_R)^{1/p}}\Vert f\Vert_{L^p(w)}.$$

On the other hand, we write the outer part as
$$\int_{|y|>R}\frac{f(y)}{|y|^n}\,dy=\sum_{k=1}^{\infty}\int_{A_k}\frac{f(y)}{|y|^n}\, dy\le \sum_{k=1}^{\infty}\frac{1}{(2^{k}R)^n}\int_{B_{2^kR}}f(y)\, dy $$
where $A_k:=\big\{ y\in\bbrn: 2^{k-1}R<|y|\le 2^kR\big\}$.
Using H\"older's inequality and \eqref{eq:apconswsi}, we have
$$\int_{B_{2^kR}}f(y)\, dy\le \Vert f\Vert_{L^p(w)}\sigma(B_{2^kR})^{\frac{p-1}{p}}\lesssim \frac{[w]_{A_p}^{1/p}}{w(B_{2^kR})^{1/p}}\Vert f\Vert_{L^p(w)}.$$
Since $w\in A_p\subset A_{\infty}$, there exists $\delta>0$ such that
\begin{equation*}
\frac{w(B_R)}{w(B_{2^kR})}\lesssim \bigg( \frac{|B_R|}{|B_{2^kR}|}\bigg)^{\delta}=2^{-kn\delta}\quad \Longrightarrow \quad \frac{1}{w(B_{2^kR})^{1/p}}\lesssim \frac{1}{w(B_R)^{1/p}}2^{-kn\delta /p}
\end{equation*}
Therefore, 
$$\int_{B_{2^kR}}f(y)\, dy\lesssim 2^{-kn\delta/p} \frac{[w]_{A_p}^{1/p}}{w(B_R)^{1/p}} \Vert f\Vert_{L^p(w)},$$
which finally implies
$$\int_{|y|>R}\frac{f(y)}{|y|^n}\,dy\lesssim  \frac{[w]_{A_p}^{1/p}}{w(B_R)^{1/p}} \Vert f\Vert_{L^p(w)}\sum_{k=1}^{\infty}2^{-kn\delta/p}\lesssim \frac{[w]_{A_p}^{1/p}}{w(B_R)^{1/p}} \Vert f\Vert_{L^p(w)},$$
as desired.
This completes the proof.
\end{proof}

\begin{proposition}\label{prop:multiweighthardyest}
Let $1\le p_1,\dots,p_m<\infty$ with $1/p_1+\cdots+1/p_m=1/p$.
Assume $w_j\in A_{p_j}$, $j=1,\dots,m$.
Then we have
$$ \bigg\Vert \prod_{j=1}^{m}\HH\big(|f_j|\big)\bigg\Vert_{L^{p,\infty}(\nu)} \lesssim \prod_{j=1}^{m}\Vert f_j\Vert_{L^{p_j}(w_j)}$$
where $\nu=\prod_{j=1}^{m}w_j^{p/p_j}$.
\end{proposition}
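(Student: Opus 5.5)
The plan is to exploit the radial structure of $\HH$: each $\HH(|f_j|)$ is radial and radially nonincreasing. Combined with the pointwise bound of Lemma \ref{lem:weighthardyl1w}, this reduces the weak-type estimate to a single inequality on balls centered at the origin.

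First, observe that for fixed $y$ the kernel $\min\{|x|^{-n},|y|^{-n}\}$ depends only on $|x|$ and is nonincreasing in $|x|$. Hence $\HH(|f_j|)(x)=h_j(|x|)$ with $h_j$ nonincreasing on $(0,\infty)$. The product $F:=\prod_{j=1}^m\HH(|f_j|)$ is then also of the form $F(x)=h(|x|)$ with $h=\prod_j h_j$ nonincreasing. Consequently, for each $\lambda>0$ the level set $E_\lambda:=\{x:F(x)>\lambda\}$ is a ball $B_R$ (open or closed, possibly empty, possibly with $R=\infty$) centered at the origin.

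Second, apply Lemma \ref{lem:weighthardyl1w} to each factor, with $p_j$ and $w_j\in A_{p_j}$; this is allowed since the lemma covers $1\le p_j<\infty$. For every $r>0$ and $|x|=r$ it gives
$$F(x)\le C\prod_{j=1}^m \frac{[w_j]_{A_{p_j}}^{1/p_j}}{w_j(B_r)^{1/p_j}}\,\Vert f_j\Vert_{L^{p_j}(w_j)}.$$
Now fix $0<r<R$. A point with $|x|=r$ lies in $E_\lambda$, so $\lambda<F(x)$, and therefore
$$\lambda^p\prod_{j=1}^m w_j(B_r)^{p/p_j}\le C\prod_{j=1}^m\Vert f_j\Vert_{L^{p_j}(w_j)}^p.$$
Since $\sum_j p/p_j=1$, Hölder's inequality with exponents $p_j/p$ gives
$$\nu(B_r)=\int_{B_r}\prod_j w_j^{p/p_j}\le\prod_j w_j(B_r)^{p/p_j}.$$
Hence $\lambda^p\nu(B_r)\le C\prod_j\Vert f_j\Vert_{L^{p_j}(w_j)}^p$ for every $r<R$.

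Third, let $r\uparrow R$. By monotone convergence $\nu(B_r)\to\nu(E_\lambda)$; the sphere $|x|=R$ has measure zero, so it does not matter whether $E_\lambda$ is open or closed, and the case $R=\infty$ is covered as well. Taking the supremum over $\lambda$ yields the claim.

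The one delicate point is the direction of monotonicity. One must not try to bound $F$ on $E_\lambda$ using the radius $R$ itself, because $w_j(B_r)^{-1/p_j}$ increases as $r$ decreases. Evaluating the bound at the boundary radius $r$ of the sub-ball $B_r$ and then letting $r\uparrow R$ avoids this, so I expect no real obstacle beyond this bookkeeping.
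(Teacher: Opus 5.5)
Your proof is correct and follows essentially the same route as the paper: radial monotonicity of $\prod_j \HH(|f_j|)$ makes $E_\lambda$ a ball centered at the origin, Lemma \ref{lem:weighthardyl1w} bounds the product on spheres, H\"older's inequality gives $\nu(B_r)\le\prod_j w_j(B_r)^{p/p_j}$, and monotone convergence over inner balls finishes the argument. Your choice to evaluate on spheres $|x|=r<R$ is, if anything, slightly more careful than the paper's phrasing. The paper evaluates on $|x|=R$ for balls $B_R\subset E_\lambda$, even though that sphere need not lie in $E_\lambda$ when $E_\lambda$ is open.
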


\begin{proof}
When $p_1,\dots,p_m>1$, the conclusion follows immediately from the embedding $L^p(\nu)\hookrightarrow L^{p,\infty}(\nu)$, H\"older's inequality, and the estimate \eqref{hhlpwest}.
For the case when at least one of  $p_j$ is equal to $1$,  we shall give a direct argument based on the radial monotonicity of the operator $\HH$ and the weighted pointwise Hardy-type estimate in Lemma \ref{lem:weighthardyl1w}.

Fix $\lambda>0$ and define
$$E_\lambda:=\bigg\{ x\in\bbrn : \prod_{j=1}^{m}\HH\big(|f_j|\big)(x)>\lambda\bigg\}.$$
We observe that $\prod_{j=1}^{m}\HH\big(|f_j|\big)$ is also radially nonincreasing, and thus $E_\lambda$ is a ball centered at the origin, possibly empty or all of $\bbrn$. 
Therefore, it is enough to consider balls $B_R\subset E_\lambda$. 
For such $R$, we have
\begin{equation}\label{eq:hf1hf2ptest}
 \prod_{j=1}^{m}\HH\big(|f_j|\big)(x)>\lambda \quad \text{ for all $x$ with $|x|=R$}.
\end{equation}
Moreover, by Hölder's inequality,
\begin{equation}\label{eq:nubrdecomp}
\nu(B_R)^{1/p} = \bigg(\int_{B_R}\prod_{j=1}^{m}w_j^{p/p_j}(y) \, dy \bigg)^{1/p} \le \prod_{j=1}^{m}w_j(B_R)^{1/p_j}.
\end{equation}
Using \eqref{eq:hf1hf2ptest} and \eqref{eq:nubrdecomp}, for any $x$ with $|x|=R$,
$$ \lambda \nu(B_R)^{1/p} \le \prod_{j=1}^{m}\Big( \HH\big(|f_j|\big)(x) w_j(B_R)^{1/p_j}\Big).$$
Now it follows from Lemma \ref{lem:weighthardyl1w} that the right-hand side is controlled by
$$\prod_{j=1}^{m}\Big( [w_j]_{A_{p_j}}^{1/p_j}\Vert f_j\Vert_{L^{p_j}(w_j)}\Big)$$
uniformly in $R$ and $x$ with $|x|=R$.
Taking the supremum over all $B_R\subset E_\lambda$ and using monotone convergence, we have
$$ \lambda \nu(E_\lambda)^{1/p} \lesssim \prod_{j=1}^{m}\Vert f_j\Vert_{L^{p_j}(w_j)} \quad \text{ uniformly in $\lambda>0$},$$
as desired.
\end{proof}

\hfill

\section{Proof of Theorem \ref{Herzextrapol}}\label{sec:5}

We first observe that the condition \eqref{LpboundT} implies
\begin{equation}\label{eq:local-unweighted-block}
\big\Vert \chi_{I_k}T\big(\chi_{I_k^*}f\big)\big\Vert_{L^{p_0}(\bbrn)}\le \big\Vert T\big(\chi_{I_k^*}f\big)\big\Vert_{L^{p_0}(\bbrn)} \lesssim \big\Vert\chi_{I_k^*}f\big\Vert_{L^{p_0}(\bbrn)}.
\end{equation}
We decompose
\begin{equation*}
Tf=T_{\mathrm{in}}f+T_{\mathrm{out}}f,
\end{equation*}
where
$$ T_{\mathrm{in}}f(x):=\sum_{k\in\bbz}\chi_{I_k}(x)\,T\big(\chi_{I_k^*}f\big)(x), \qquad T_{\mathrm{out}}f:=\sum_{k\in\bbz}\chi_{I_k}(x)\,T\big(\chi_{(I_k^*)^c}f\big)(x).$$
and will estimate the two terms separately.\\

Since
$$\chi_{I_k}T_{\mathrm{in}}f=\chi_{I_k}T\big(\chi_{I_k^*}f\big),$$
we obtain, by \eqref{consondyad} and \eqref{eq:local-unweighted-block},
\begin{align*}
&\big\Vert    \chi_{I_k} T_{\mathrm{in}}f \big\Vert_{L^{p_0}(w)}\le \Big(\sup_{x\in I_k}w(x)\Big)^{1/{p_0}} \big\Vert \chi_{I_k}T\big(\chi_{I_k^*}f\big)\big\Vert_{L^{p_0}(\bbrn)} \\
&\lesssim \Big(\inf_{x\in I_k^*}w(x)\Big)^{1/{p_0}}\big\Vert \chi_{I_k^*}f\big\Vert_{L^{p_0}(\bbrn)} \le \big\Vert \chi_{I_k^*}f\big\Vert_{L^{p_0}(w)} \le \sum_{|j-k|\le 1}\big\Vert \chi_{I_j}f\big\Vert_{L^{p_0}(w)}.
\end{align*}
Clearly, this yields
\begin{equation*}
\big\Vert T_{\mathrm{in}}f\big\Vert_{\dot{K}_{p_0}^{0,q}(w)} \lesssim \Vert f\Vert_{\dot{K}_{p_0}^{0,q}(w)}.
\end{equation*}

\medskip

On the other hand, we observe that for $x\in I_k$ and $y\in (I_k^*)^c$, 
$$|x-y|\sim |x|+|y|\sim \max\{|x|,|y|\}$$
and thus, by \eqref{sizecon}, for $x\in I_k$,
\begin{equation}\label{toutptest}
 \big|T\big(\chi_{(I_k^*)^c}f\big)(x)\big| \lesssim \HH (|f|)(x)
 \end{equation}
where the operator $\HH$ is defined in \eqref{hardytype}.
Therefore
\begin{equation*}
 \big|T_{\mathrm{out}}f(x)\big| \lesssim \HH (|f|)(x)
\end{equation*}
and it follows from Proposition \ref{keylemHbdWi} that for all $0<q\le \infty$
\begin{equation*}
\big\Vert T_{\mathrm{out}}f\big\Vert_{\dot{K}_{p_0}^{0,q}(w)}\lesssim \big\Vert \HH\big(|f|\big)\big\Vert_{\dot{K}_{p_0}^{0,q}(w)}\lesssim \Vert f\Vert_{\dot{K}_{p_0}^{0,q}(w)}.
\end{equation*}
This completes the proof. \qed

\hfill

\section{Proof of Theorem \ref{mainlinear1}}\label{sec:6}

For $x\in B_R$, let us define
$$ \UU_R\big( Tf\big)(x):=\frac{1}{|x|^n}\int_{|y|\le |x|, |y|<R} \big| Tf(y)\big|\, dy=:S_T^Rf(x). $$
Now we write
\begin{align*}
S_T^Rf(x)&\le \frac{1}{|x|^n}\sum_{k\le 1+\log_2{R}}\int_{|y|\le |x|, y\in I_k} \big| T\big(\chi_{I_k^*}f\big)(y)\big|\chi_{B_R}(y)\, dy\\
&\qq + \frac{1}{|x|^n}\sum_{k\le 1+\log_2{R}}\int_{|y|\le |x|, y\in I_k} \big| T\big(\chi_{(I_k^*)^c}f\big)(y)\big|\chi_{B_R}(y)\, dy\\
&=:S_T^{R, \mathrm{in}}f(x)+S_T^{R, \mathrm{out}}f(x).
\end{align*}

To estimate the term corresponding to $S_T^{R,\mathrm{in}}f$, we apply H\"older's inequality, which yields
$$S_T^{R,\mathrm{in}}f(x)\lesssim \bigg(\frac{1}{|x|^n}\sum_{k\le 1+\log_2{R}}\int_{|y|\le |x|, y\in I_k} \big| T\big(\chi_{I_k^*} f\big)(y)\big|^{p_0}\chi_{B_R}(y)\, dy \bigg)^{1/p_0}, $$
and consequently,
\begin{align*}
&\bigg(\int_{B_R} \big| S_T^{R, \mathrm{in}}f(x)\big|^{p_0} w(x)\, dx\bigg)^{1/p_0}\\
&\lesssim \bigg(\sum_{k\le 1+\log_2{R}}\int_{y\in I_k}\big| T\big(\chi_{I_k^*}f\big)(y)\big|^{p_0}\chi_{B_R}(y)\Gamma_Rw(y)\, dy \bigg)^{1/p_0}
\end{align*}
where 
\begin{equation}\label{gammatdef}
\Gamma_t w(y):=\int_{|y|\le |x|\le t}\frac{w(x)}{|x|^n}\, dx \quad \text{ for all $~t\ge |y|$.}
\end{equation}
Observe that $\Gamma_R w$ is a nonincreasing radial function. 
Thus, for any $y,z\in I_k^*$, we have
\begin{align*}
\Gamma_R w(y)&\le \int_{2^{k-2}\le |x|<2^{k+1}}\frac{w(x)}{|x|^n}\, dx + \int_{2^{k+1}\le |x|<R}\frac{w(x)}{|x|^n}\, dx\lesssim Mw(z)+\Gamma_R w(z).
\end{align*}
This implies 
\begin{equation}\label{supgammaest}
\esssup_{ I_k^*}\Gamma_R w \lesssim \essinf_{ I_k^*}\Big( Mw +\Gamma_{R} w\Big).
\end{equation}
Therefore, applying condition \eqref{LpboundT}, we deduce 
\begin{align*}
&\int_{y\in I_k}\big| T\big(\chi_{I_k^*}f\big)(y)\big|^{p_0}\chi_{B_R}(y) \Gamma_Rw(y)\, dy \\
&\le \Big( \esssup_{I_k}\Gamma_Rw\Big) \big\Vert T\big(\chi_{I_k^*}f\big) \big\Vert_{L^{p_0}(\bbrn)}^{p_0}\lesssim \Big( \esssup_{ I_k}\Gamma_Rw\Big) \big\Vert \chi_{I_k^*}f \big\Vert_{L^{p_0}(\bbrn)}^{p_0}\\
&\lesssim \int_{I_k^*}\big| f(y)\big|^{p_0} Mw(y)\, dy+\int_{I_k^*}\big| f(y)\big|^{p_0} \Gamma_Rw(y)\, dy.
\end{align*}
Summing over k, this yields
\begin{align*}
&\bigg(\int_{B_R} \big| S_T^{R, \mathrm{in}}f(x)\big|^{p_0} w(x)\, dx\bigg)^{1/{p_0}}\\
&\lesssim \bigg(\int_{B_{4R}}\big| f(y)\big|^{p_0} Mw(y)\, dy \bigg)^{1/{p_0}}+\bigg(\int_{B_{4R}}\big| f(y)\big|^{p_0} \Gamma_{4R} w(y)\, dy \bigg)^{1/{p_0}}.
\end{align*}
Since $w\in A_1$, using \eqref{a1character}, the first term can be controlled by $\Vert f\Vert_{L^{p_0}(w)}$, uniformly in $R$. Furthermore, by Fubini's theorem 
the second term is equal to
$$\Big\Vert \chi_{B_{4R}} \UU_{4R}\big(\chi_{B_{4R}}|f|^{p_0}\big)\Big\Vert_{L^1(w)}^{1/{p_0}}.$$
Since $\UU_{4R}\big( \chi_{B_{4R}}|f|^{p_0}\big)(x)\lesssim M\big(|f|^{p_0}\big)(x)$ uniformly in $R$, applying Lemma \ref{utfl1uest},
the last expression is bounded by
\begin{align*}
\max\big\{1,w(B_{4R})^{1/{p_0}}\big\} \big\Vert |f|^{p_0}\big\Vert_{L\log{L}(w)}^{1/{p_0}}\sim \max\big\{1,w(B_{R})^{1/{p_0}}\big\} \Vert f\Vert_{L^{p_0}\log{L}(w)}.
\end{align*}
Combining these estimates, we conclude that
$$\big\Vert \chi_{B_R}S_T^{R,\mathrm{in}}f\big\Vert_{L^{p_0}(w)}\lesssim \max\big\{1,w(B_{R})^{{1}/{{p_0}}}\big\} \Vert f\Vert_{L^{p_0}\log{L}(w)}.$$

On the other hand, in view of \eqref{toutptest}, for $y\in I_k$
\begin{equation*}
\big| T\big(\chi_{(I_k^*)^c}f\big)(y)\big| \lesssim \HH \big(|f|\big)(y)
\end{equation*}
and thus
\begin{align*}
S_T^{R,\mathrm{out}}f(x)&\lesssim \frac{1}{|x|^n}\int_{|y|\le |x|}   \chi_{B_R}(y)\HH \big( |f|\big)(y)   \, dy\lesssim M\Big(\chi_{B_R}\HH \big( |f|\big) \Big)(x).
\end{align*}
Since $w\in A_1\subset A_{p_0}$, we can invoke \eqref{weightHLmax} and \eqref{hhlpwest} to deduce
\begin{align*}
\bigg(\int_{B_R} \big| S_T^{R,\mathrm{out}}f(x)\big|^{p_0} w(x)\, dx\bigg)^{1/p_0}&\lesssim \Big\Vert M\Big(\chi_{B_R} \HH \big( |f|\big) \Big)\Big\Vert_{L^{p_0}(w)}\\
&\lesssim \big\Vert \HH\big(|f|\big)\big\Vert_{L^{p_0}(w)} \lesssim \Vert f\Vert_{L^{p_0}(w)},
\end{align*}
uniformly in $R>0$.
This concludes the proof. \qed

\hfill

\section{Proof of Theorem \ref{mainlinearpq}}\label{sec:7}

Let $u\in A_1$, and fix a number $\alpha$ such that $0<\alpha<n$. 
For each $N\in \mathbb{N}$, define
$$ u_N(x):=\min\big\{u(x),N|x|^{-\alpha}\big\}, \qquad x\in \bbrn.$$
Then $u_N(x)\nearrow u(x)$ for almost every $x\in \mathbb{R}^n$ as $N\to\infty$.

We first claim that $u_N\in A_1$, uniformly in $N$. Indeed, it is well known that $|x|^{-\alpha}\in A_1$ whenever $0<\alpha<n$. Hence, for every ball $B$,
\begin{align*}
\frac{1}{|B|}\int_B u_N(x)\,dx &\le \min\bigg\{\frac{1}{|B|}\int_B u(x)\,dx, N\frac{1}{|B|}\int_B |x|^{-\alpha}\,dx\bigg\} \\
&\le \max\Big\{ [u]_{A_1},\big[|x|^{-\alpha}\big]_{A_1}\Big\} \min\Big\{\essinf_{B}u,\, N\essinf_{B}|x|^{-\alpha}\Big\} \\
&=\max\Big\{[u]_{A_1},\big[|x|^{-\alpha}\big]_{A_1}\Big\}\essinf_{B}u_N.
\end{align*}
Thus $u_N\in A_1$, and
\begin{equation}\label{una1ua1uniform}
 [u_N]_{A_1}\le \max\Big\{[u]_{A_1},\big[|x|^{-\alpha}\big]_{A_1}\Big\}\lesssim [u]_{A_1}
 \end{equation}
where the last inequality follows from $[u]_{A_1}\ge 1$.

Next we show that $\HH u_N(x)<\infty$ for almost every $x\in \bbrn$.
Since $u_N(x)\le N|x|^{-\alpha}$, we have
$$\HH u_N(x) \le N \HH\big(|\cdot|^{-\alpha}\big)(x).$$
For $x\neq 0$, by the definition of $\HH$,
\begin{align*}
\HH\big(|\cdot|^{-\alpha}\big)(x) &= \frac1{|x|^n}\int_{|y|\le |x|}|y|^{-\alpha}\,dy +\int_{|y|>|x|} \frac{1}{|y|^{n+\alpha}}\, dy \lesssim |x|^{-\alpha}.
\end{align*}
Hence
\begin{equation}\label{hhunxfinite}
\HH u_N(x)\lesssim N|x|^{-\alpha}<\infty \qquad \text{for a.e. }x\in\bbrn.
\end{equation}
Therefore, by Lemma \ref{ca1},
\begin{equation*}
\HH u_N\in A_1 \qquad \text{ and } \qquad [ \HH u_N]_{A_1}\lesssim [u_N]_{A_1}\lesssim [u]_{A_1}~~\text{uniformly in $N$}.
\end{equation*}

\medskip

Now, using H\"older's inequality and Fubini's theorem, we obtain
\begin{align*}
\int_{\bbrn}\big|S_Tf(x)\big|^{p_0} u_N(x)\,dx&\le \int_{\bbrn}\bigg(\frac1{|x|^n}\int_{|y|\le |x|}\big|Tf(y)\big|^{p_0}\,dy\bigg) u_N(x)\,dx \\
&=\int_{\bbrn}\big|Tf(y)\big|^{p_0} \bigg(\int_{|x|\ge |y|}\frac{u_N(x)}{|x|^n}\,dx\bigg)\,dy \\
&\le \int_{\bbrn} \big|Tf(y)\big|^{p_0} \HH u_N(y)\,dy.
\end{align*}
Since $u_N\in A_1$ and $\HH u_N<\infty$ almost everywhere, Lemma \ref{ca1} implies that $\HH u_N$ is a radially nonincreasing weight in $A_1$. Hence, by Lemma \ref{rdrd}, the weight $\HH u_N$ satisfies condition \eqref{consondyad}. 
Then we may apply Theorem \ref{SoWetheorem}, together with the assumption \eqref{LpboundT}, to obtain
$$ \int_{\bbrn} \big|Tf(x)\big|^{p_0} \HH u_N(x)\,dx \lesssim \int_{\bbrn} \big|f(x)\big|^{p_0} \HH u_N(x)\,dx.$$
By the symmetry of the kernel of $\HH$, another application of Fubini's theorem yields
$$ \int_{\bbrn} |f(x)|^{p_0} \HH u_N(x)\,dx= \int_{\bbrn} u_N(x)\HH\big(|f|^{p_0}\big)(x)\,dx \le \int_{\bbrn} u(x)\HH\big(|f|^{p_0}\big)(x)\,dx.$$
Combining the above estimates, we conclude that
$$\int_{\bbrn}\big|S_Tf(x)\big|^{p_0} u_N(x)\,dx \lesssim \int_{\bbrn} u(x) \HH\big(|f|^{p_0}\big)(x)\,dx,$$
with an implicit constant independent of $N$. 
Letting $N\to\infty$ and applying the monotone convergence theorem on the left-hand side, we obtain
\begin{equation}\label{weightedesta1}
\int_{\bbrn}\big|S_Tf(x)\big|^{p_0} u(x)\,dx \lesssim \int_{\bbrn} u(x) \HH\big(|f|^{p_0}\big)(x)\,dx \qquad \text{for every }u\in A_1.
\end{equation}

\hfill

We now apply Lemma \ref{rfrf} to \eqref{weightedesta1} with the pair 
$$\Big( \HH \big(|f|^{p_0}\big)^{1/p_0},\, S_Tf \Big)$$
to obtain that, for every $p>p_0$ and every weight $w\in A_{p/p_0}$,
$$\Vert Tf\Vert_{\mathcal{C}_p(w)}= \big\Vert S_T f\big\Vert_{L^p(w)} \lesssim \Big\Vert \big(\HH(|f|^{p_0})\big)^{1/p_0}\Big\Vert_{L^p(w)}=\big\Vert \HH(|f|^{p_0})\big\Vert_{L^{p/p_0}(w)}^{1/p_0},$$
Since $p/p_0>1$ and $w\in A_{p/p_0}$, \eqref{hhlpwest} yields
$$ \big\Vert \HH\big(|f|^{p_0}\big)\big\Vert_{L^{p/p_0}(w)} \lesssim \big\Vert |f|^{p_0} \big\Vert_{L^{p/p_0}(w)}=\Vert f\Vert_{L^p(w)}^{p_0}.$$
Therefore, 
$$ \big\Vert Tf\big\Vert_{\mathcal{C}_p(w)} \lesssim \Vert f\Vert_{L^p(w)},$$
and the result follows. \qed

\hfill

\section{Proof of Theorem \ref{mainlinearpq2}}\label{sec:8}

By hypothesis \eqref{LpboundT} holds for every $p_0>1$. Let $p>1$ and    $w\in A_p$. 
By Lemma \ref{weightopen}, there exists   $1<t<p$ such that $w\in A_t$. Set $p_0:={p}/{t}.$
Then $1<p_0<p$ and $w\in A_{p/{p_0}}$. Therefore, the desired estimate follows immediately from Theorem \ref{mainlinearpq}. \qed

\hfill

\section{Proof of Theorem \ref{bilinearHerz}}\label{sec:9}

We first note that, by the assumption \eqref{biconsondyad}
\begin{equation}\label{nu-local-control}
\esssup_{I_k}\nu \lesssim \prod_{j=1}^{m}\Big(\essinf_{ I_k^*}w_j\Big)^{{p}/{p_j}}.
\end{equation}

We define
\begin{equation*}
T_{\mathrm{\mathrm{in}}}\big(f_1,\dots,f_m\big)(x):=\sum_{k\in\bbz}\chi_{I_k}(x)T\big(\chi_{I_k^*}f_1,\dots,\chi_{I_k^*}f_m\big)(x),
\end{equation*}
and
\begin{equation*}
T_{\mathrm{out}}\big(f_1,\dots,f_m\big)(x):=T\big(f_1,\dots,f_m\big)(x)-T_{\mathrm{in}}\big(f_1,\dots,f_m\big)(x)
\end{equation*}
so that
\begin{equation}\label{Tdecompinout}
T(f_1,\dots,f_m)=T_{\mathrm{in}}(f_1,\dots,f_m)+T_{\mathrm{out}}(f_1,\dots,f_m).
\end{equation}

\medskip

To deal with the first term, we observe that the assumed unweighted boundedness \eqref{bilinearLpbound} implies that for each $k\in\bbz$, 
\begin{equation}\label{bilinearestonik}
\big\Vert \chi_{I_k}T\big(\chi_{I_k^*}f_1,\dots,\chi_{I_k^*}f_m)\big\Vert_{L^p(\bbrn)} \lesssim \prod_{j=1}^{m}\big\Vert \chi_{I_k^*}f_j\big\Vert_{L^{p_j}(\bbrn)}.
\end{equation}
Now we write
$$ \big\Vert T_{\mathrm{in}}(f_1,\dots,f_m)\big\Vert_{\dot{K}_p^{0,q}(\nu)}= \Big\Vert \Big\{ \big\Vert \chi_{I_k}T\big(\chi_{I_k^*}f_1,\dots,\chi_{I_k^*}f_m\big)\big\Vert_{L^p(\nu)} \Big\}_{k\in\bbz}\Big\Vert_{\ell^q}$$
and apply \eqref{nu-local-control} and \eqref{bilinearestonik} to obtain
\begin{align*}
\big\Vert \chi_{I_k}T\big(\chi_{I_k^*}f_1,\dots,\chi_{I_k^*}f_m\big)\big\Vert_{L^p(\nu)}&\le \Big(\esssup_{ I_k}\nu\Big)^{1/p} \big\Vert \chi_{I_k}T\big(\chi_{I_k^*}f_1,\dots,\chi_{I_k^*}f_m)\big\Vert_{L^p(\bbrn)} \\
&\lesssim \prod_{j=1}^{m}\bigg(\Big(\essinf_{ I_k^*}w_j\Big)^{1/p_j}\big\Vert \chi_{I_k^*}f_j\big\Vert_{L^{p_j}(\bbrn)}  \bigg)\\
&\lesssim \prod_{j=1}^{m}\big\Vert \chi_{I_k^*}f_j\big\Vert_{L^{p_j}(w_j)}.
\end{align*}
Therefore we have
\begin{align*}
\big\Vert T_{\mathrm{in}}(f_1,\dots,f_m)\big\Vert_{\dot{K}_p^{0,q}(\nu)}&\lesssim \Big\Vert \Big\{ \prod_{j=1}^{m} \big\Vert \chi_{I_k^*}f_j\big\Vert_{L^{p_j}(w_j)}\Big\}_{k\in\bbz} \Big\Vert_{\ell^{q}}\\
&\le  \prod_{j=1}^{m}\Big\Vert \Big\{  \big\Vert \chi_{I_k^*}f_j\big\Vert_{L^{p_j}(w_j)}\Big\}_{k\in\bbz} \Big\Vert_{\ell^{q_j}}
\end{align*}
where the last inequality follows from H\"older's inequality, with the usual modification when some $q_j=\infty$.
Since $I_k^*=I_{k-1}\cup I_k\cup I_{k+1}$, for each $j=1,\dots,m$,
$$\Big\Vert \Big\{  \big\Vert \chi_{I_k^*}f_j\big\Vert_{L^{p_j}(w_j)}\Big\}_{k\in\bbz} \Big\Vert_{\ell^{q_j}}\sim \Big\Vert \Big\{  \big\Vert \chi_{I_k}f_j\big\Vert_{L^{p_j}(w_j)}\Big\}_{k\in\bbz} \Big\Vert_{\ell^{q_j}}=\Vert f_j\Vert_{\dot{K}_{p_j}^{0,q_j}(w_j)},$$
and thus
\begin{equation}\label{inHerzfinal}
\big\Vert T_{\mathrm{in}}(f_1,\dots,f_m)\big\Vert_{\dot{K}_p^{0,q}(\nu)}\lesssim \prod_{j=1}^{m}\Vert f_j\Vert_{\dot{K}_{p_j}^{0,q_j}(w_j)},
\end{equation}
as desired.

\medskip

Moreover, we observe that 
if $x\in I_k$ and at least one of $y_j$, $j=1,\dots,m$, does not belong to $I_k^*$, then
$$\sum_{j=1}^{m} |x-y_j| \sim\max\{|x|,|y_1|,\dots,|y_m|\}.$$
Hence
\begin{align*}
&\big|T_{\mathrm{out}}\big(f_1,\dots,f_m\big)(x)\big|\\
&\lesssim \int_{(\bbrn)^m} \frac{1}{\max\{|x|,|y_1|,|y_2|,\dots,|y_m|\}^{mn}}\prod_{j=1}^{m}|f_j(y_j)| \,dy_1\dots dy_m.
\end{align*}
Since
$$ \frac1{\max\{|x|,|y_1|,|y_2|,\dots,|y_m|\}^{mn}} \le \prod_{j=1}^{m}\min\bigg\{ \frac{1}{|x|^n},\frac{1}{|y_j|^n}\bigg\},$$
it follows that
\begin{equation}\label{outpointwise}
\big|T_{\mathrm{out}}\big(f_1,\dots,f_m\big)(x)\big| \lesssim \prod_{j=1}^{m}\HH\big(|f_j|\big)(x),
\end{equation}
where the operator $\HH$ is the Hardy-type radial operator, defined in \eqref{hardytype}.
Then H\"older's inequality yields that
\begin{equation*}
\big\Vert T_{\mathrm{out}}(f_1,\dots,f_m)\big\Vert_{\dot{K}_p^{0,q}(\nu)}\lesssim \bigg\Vert \prod_{j=1}^{m}\HH\big(|f_j|\big)\bigg\Vert_{\dot{K}_p^{0,q}(\nu)}\le \prod_{j=1}^{m}\Big\Vert \HH\big(|f_j|\big)\Big\Vert_{\dot{K}_{p_j}^{0,q_j}(w_j)}.
\end{equation*}
Since $w_j\in A_{p_j}$ and $p_j>1$, Proposition \ref{keylemHbdWi} yields
$$ \big\Vert \HH(|f_j|)\big\Vert_{\dot{K}_{p_j}^{0,q_j}(w_j)} \lesssim \Vert f_j\Vert_{\dot{K}_{p_j}^{0,q_j}(w_j)}, \qquad j=1,\dots,m.$$
Therefore
\begin{equation}\label{outHerzfinal}
\big\Vert T_{\mathrm{out}}(f_1,\dots,f_m)\big\Vert_{\dot{K}_{p}^{0,q}(\nu)} \lesssim \prod_{j=1}^{m}\Vert f_j\Vert_{\dot{K}_{p_j}^{0,q_j}(w_j)}.
\end{equation}

Combining \eqref{inHerzfinal} and \eqref{outHerzfinal}, we conclude that
$$\big\Vert T(f_1,\dots,f_m)\big\Vert_{\dot{K}_{p}^{0,q}(\nu)} \lesssim \prod_{j=1}^{m}\Vert f_j\Vert_{\dot{K}_{p_j}^{0,q_j}(w_j)}.$$
This completes the proof. \qed

\hfill

\section{Proof of Theorem \ref{bilinearweakthm}}\label{sec:10}

In view of the decomposition \eqref{Tdecompinout}, it suffices to prove
\begin{equation}\label{Lorentzinred}
\big\Vert T_{\mathrm{in}}(f_1,\dots,f_m)\big\Vert_{L^{p,\infty}(\nu)}\lesssim \prod_{j=1}^{m}\Vert f_j\Vert_{L^{p_j}(w_j)}
\end{equation}
and
\begin{equation*}
\big\Vert T_{\mathrm{out}}(f_1,\dots,f_m)\big\Vert_{L^{p,\infty}(\nu)}\lesssim \prod_{j=1}^{m}\Vert f_j\Vert_{L^{p_j}(w_j)}.
\end{equation*}

\medskip

First of all, Lemma \ref{lem:disjoint-lorentz} yields
\begin{equation}\label{eq:Tin-first}
\big\Vert T_{\mathrm{in}}(f_1,\dots,f_m)\big\Vert_{L^{p,\infty}(\nu)} \lesssim \bigg( \sum_{k\in\bbz} \big\Vert \chi_{I_k}T\big(\chi_{I_k^*}f_1,\dots, \chi_{I_k^*}f_m\big)\big\Vert_{L^{p,\infty}(\nu)}^p\bigg)^{1/p}.
\end{equation}
Moreover, using \eqref{nu-local-control} and \eqref{biweakboundcon}, for each $k\in\mathbb Z$ we obtain
\begin{align*}
&\big\Vert \chi_{I_k}T\big(\chi_{I_k^*}f_1,\dots,\chi_{I_k^*}f_m)\big\Vert_{L^{p,\infty}(\nu)}\\
&\le \Big(\esssup_{I_k}\nu\Big)^{1/p} \big\Vert \chi_{I_k}T\big(\chi_{I_k^*}f_1,\dots,\chi_{I_k^*}f_m\big)\big\Vert_{L^{p,\infty}(\bbrn)}\\
&\lesssim \prod_{j=1}^{m}\bigg( \Big(\essinf_{I_k^*}w_j\Big)^{{1}/{p_j}} \big\Vert \chi_{I_k^*}f_j\big\Vert_{L^{p_j}(\bbrn)} \bigg)\\
&\le \prod_{j=1}^{m}\big\Vert  \chi_{I_k^*}f_j\big\Vert_{L^{p_j}(w_j)}.
\end{align*}
Substituting this into \eqref{eq:Tin-first}, and applying H\"older's inequality, we obtain
\begin{align*}
\big\Vert T_{\mathrm{in}}(f_1,\dots,f_m)\big\Vert_{L^{p,\infty}(\nu)} &\lesssim \bigg( \sum_{k\in\bbz} \Big(\prod_{j=1}^{m}\big\Vert  \chi_{I_k^*}f_j\big\Vert_{L^{p_j}(w_j)}\Big)^p  \bigg)^{1/p}\\
&\le \prod_{j=1}^{m}\bigg(\sum_{k\in\bbz}\big\Vert  \chi_{I_k^*}f_j\big\Vert_{L^{p_j}(w_j)}^{p_j}\bigg)^{1/p_j}\\
&\lesssim \prod_{j=1}^{m}\Vert f_j\Vert_{L^{p_j}(w_j)},
\end{align*}
which proves \eqref{Lorentzinred}.

The remaining one follows immediately from the pointwise estimate \eqref{outpointwise} and Lemma \ref{prop:multiweighthardyest}.
Indeed,
\begin{align*}
\big\Vert T_{\mathrm{out}}(f_1,\dots,f_m)\big\Vert_{L^{p,\infty}(\nu)} &\lesssim \bigg\Vert \prod_{j=1}^{m}\HH \big(|f_j|\big)  \bigg\Vert_{L^{p,\infty}(\nu)}\lesssim \prod_{j=1}^{m}\Vert f_j\Vert_{L^{p_j}(w_j)}. 
\end{align*}
This completes the proof. \qed

\hfill

\section{Proof of Theorem \ref{thm:bilinear-local}}\label{sec:11}

Fix $R>0$. As in the proof of Theorem \ref{mainlinear1}, for $x\in B_R$ we write
$$S_T^{R,r}\big(f_1,\dots,f_m\big)(x)=\UU_R^r\big(T(f_1,\dots,f_m)\big)(x)$$
where
$$\UU_R^r h(x):=\bigg( \frac1{|x|^n}\int_{|y|\le |x|, |y|<R} \big|h(y)\big|^r\,dy\bigg)^{1/r}.$$

Now we write
\begin{equation*}
S_T^{R,r}\big(f_1,\dots,f_m\big)(x)\lesssim_r S_{T}^{R,r,\mathrm{in}}\big(f_1,\dots,f_m\big)(x)+S_{T}^{R,r,\mathrm{out}}\big(f_1,\dots,f_m\big)(x)
\end{equation*}
where
\begin{align*}
&S_{T}^{R,r,\mathrm{in}}\big(f_1,\dots,f_m\big)(x)\\
&:= \bigg( \frac1{|x|^n}\sum_{k\le 1+\log_2 R} \int_{\substack{|y|\le |x|\\ y\in I_k\cap B_R}} \big| T\big(\chi_{I_k^*}f_1,\dots,\chi_{I_k^*}f_m\big)(y)\big|^r
\,dy\bigg)^{1/r}
\end{align*}
and
\begin{align*}
&S_{T}^{R,r,\mathrm{out}}\big(f_1,\dots,f_m\big)(x)\\
&:=\bigg(\frac1{|x|^n}\sum_{k\le 1+\log_2 R} \int_{\substack{|y|\le |x|\\ y\in I_k\cap B_R}} \big| T\big(f_1,\dots,f_m\big)(y)-T\big(\chi_{I_k^*}f_1,\dots,\chi_{I_k^*}f_m\big)(y)\big|^r \,dy\bigg)^{1/r}.
\end{align*}
We estimate these two terms separately.

\subsection{Estimate of $S_{T}^{R,r,\mathrm{in}}$}
By H\"older's inequality with $p/r>1$,
\begin{align*}
&S_{T}^{R,r,\mathrm{in}}(f_1,\dots,f_m)(x)\\
&\lesssim \biggl( \frac1{|x|^n} \sum_{k\le 1+\log_2 R} \int_{\substack{|y|\le |x|\\ y\in I_k\cap B_R}} \big|T\big(\chi_{I_k^*}f_1,\dots,\chi_{I_k^*}f_m\big)(y)\big|^p \,dy \bigg)^{1/p}.
\end{align*}
Therefore, by Fubini's theorem,
\begin{align*}
& \big\Vert \chi_{B_R}S_{T}^{R,r,\mathrm{in}}\big(f_1,\dots,f_m\big)\big\Vert_{L^p(\nu)} \\
&\lesssim \bigg(\sum_{k\le 1+\log_2 R}\int_{I_k\cap B_R} \big|T\big(\chi_{I_k^*}f_1,\dots,\chi_{I_k^*}f_m\big)(y)\big|^p \Gamma_{R}\nu(y)\,dy\bigg)^{1/p},
\end{align*}
where $\Gamma_R$ is defined as in \eqref{gammatdef}.
Then H\"older's inequality yields
\begin{equation*}
\big( \Gamma_{R}\nu(y) \big)^{1/p} \le \prod_{j=1}^{m}\big( \Gamma_{R}w_j(y)\big)^{1/p_j}.
\end{equation*}
Moreover, in view of \eqref{supgammaest}, for each $j=1,\dots,m$,
\begin{equation*}
\esssup_{I_k^*} \Gamma_{R}w_j \lesssim \essinf_{I_k^*} \Big(M w_j+\Gamma_{R}w_j\Big).
\end{equation*}
Hence
\begin{equation}\label{esssupbilinearga}
 \Big(\esssup_{ I_k}\Gamma_{R}\nu \Big)^{1/p} \lesssim \prod_{j=1}^m \essinf_{I_k^*}\Big(Mw_j+\Gamma_{R}w_j\Big)^{1/p_j}. 
 \end{equation}
Using now the boundedness \eqref{bilinearLpbound} and \eqref{esssupbilinearga}, we obtain
\begin{align*}
&\bigg( \int_{I_k\cap B_R}  \big|T\big(\chi_{I_k^*}f_1,\dots, \chi_{I_k^*}f_m\big)(y)\big|^p \Gamma_{R}\nu(y)\,dy \bigg)^{1/p} \\
&\le \Bigl(\esssup_{ I_k} \Gamma_{R}\nu \Big)^{1/p} \big\Vert T\big(\chi_{I_k^*}f_1,\dots,\chi_{I_k^*}f_m\big)\big\Vert_{L^p(\bbrn)}\\
&\lesssim \prod_{j=1}^m \bigg(\essinf_{I_k^*}\Big(Mw_j+\Gamma_{R}w_j\Big)^{1/p_j} \big\Vert \chi_{I_k^*}f_j\big\Vert_{L^{p_j}(\bbrn)}\bigg)\\
&\le  \prod_{j=1}^{m}\mathcal{A}_{j,k}
\end{align*}
where
$$\mathcal{A}_{j,k}:=\bigg( \int_{I_k^*}|f_j(y)|^{p_j}\Big(Mw_j(y)+\Gamma_{R}w_j(y)\Big)\,dy\bigg)^{1/p_j}, \quad j=1,\dots,m.$$
Summing in $k\in\bbz$ and using H\"older's inequality for series, we arrive at
$$ \big\Vert \chi_{B_R} S_{T}^{R,r,\mathrm{in}}(f_1,\dots,f_m)\big\Vert_{L^p(\nu)} \lesssim \prod_{j=1}^{m}\bigg(\sum_{k\in\bbz} \mathcal{A}_{j,k}^{p_j}\bigg)^{1/p_j}.$$
Since $I_k^*\subset B_{4R}$ whenever $k\le 1+\log_2 R$, and $\Gamma_Rw_j(y)\le \Gamma_{4R}w_j(y)$,
the last expression is bounded by
$$ \prod_{j=1}^m \biggl( \int_{B_{4R}} \big|f_j(y)\big|^{p_j}Mw_j(y)\,dy + \int_{B_{4R}}\big|f_j(y)\big|^{p_j}\Gamma_{4R}w_j(y)\,dy \bigg)^{1/p_j}.$$
Since $w_j\in A_1$, using \eqref{a1character}, for each $j=1,\dots,m$,
$$ \int_{B_{4R}} |f_j(y)|^{p_j} Mw_j(y)\,dy \lesssim \Vert f_j\Vert_{L^{p_j}(w_j)}^{p_j} \lesssim \Vert f_j\Vert_{L^{p_j}\log L(w_j)}^{p_j} \quad \text{ uniformly in }R.$$
Also, by Fubini's theorem,
$$\int_{B_{4R}} |f_j(y)|^{p_j}\Gamma_{4R}w_j(y)\,dy = \int_{B_{4R}} \UU_{4R}\big( \chi_{B_{4R}}|f_j|^{p_j} \big)(x) w_j(x)\,dx.$$
Since $\UU_{4R}\big(\chi_{B_{4R}}|f_j|^{p_j}\big)(x) \lesssim M\big(|f_j|^{p_j}\big)(x),$
by Lemma \ref{utfl1uest}, the above term is controlled by a constant times 
$$ \max\big\{1,w_j(B_{4R})\big\} \big\Vert |f_j|^{p_j} \big\Vert_{L\log L(w_j)}\sim \max\big\{1,w_j(B_{4R})\big\} \big\Vert f_j \big\Vert_{L^{p_j}\log L(w_j)}^{p_j}.$$
Note that $w_j\in A_1$ implies that $w_j$ induces a doubling measure, and thus
$$w_j(B_{4R})\lesssim_{w_j} w_j(B_R) \quad \text{uniformly in $R$.}$$
Finally, we conclude that
$$ \Big\Vert \chi_{B_R}S_{T}^{R,r,\mathrm{in}}(f_1,\dots,f_m)\Big\Vert_{L^p(\nu)} \lesssim
\prod_{j=1}^m \Big( \max\big\{1,w_j(B_R)^{1/p_j}\big\} \Vert f_j\Vert_{L^{p_j}\log L(w_j)} \Big).$$

\medskip

\subsection{Estimate of $S_{T}^{R,r,\mathrm{out}}$.}
If $y\in I_k$ and  at least one of $z_j$, $j=1,\dots,m$, does not belong to $I_k^*$, then
$$ \sum_{j=1}^{m}|y-z_j| \sim \max\big\{|y|,|z_1|,|z_2|,\dots,|z_m|\big\}.$$
Since at least one of the variables $z_j$ belongs to $(I_k^*)^c$, the size condition gives
\begin{align*}
&\big|T(f_1,\dots,f_m)(y)-T(\chi_{I_k^*}f_1,\dots,\chi_{I_k^*}f_m)(y)\big|\\
&\lesssim \int_{(\mathbb R^n)^m} \frac{\prod_{j=1}^m |f_j(z_j)|} {\max\{|y|,|z_1|,\dots,|z_m|\}^{mn}} \,dz_1\cdots dz_m.
\end{align*}
The last expression is bounded by the product of $\HH\big(|f_j|\big)(y)$ over $j=1,\dots,m$, because
 $$ \frac{1}{\max\{|y|,|z_1|,|z_2|,\dots,|z_m|\}^{mn}} \le \prod_{j=1}^{m}\min\Big\{ \frac{1}{|y|^n},\frac1{|z_j|^n}\Big\}.$$
Hence
\begin{align*}
 S_{T}^{R,r,\mathrm{out}}\big(f_1,\dots,f_m\big)(x) &\lesssim \bigg( \frac1{|x|^n}\int_{|y|\le |x|} \chi_{B_R}(y)\Big(\prod_{j=1}^{m}\mathcal{H}\big(|f_j|\big)(y)\Big)^{r} \,dy \bigg)^{1/r}\\
 & \lesssim \bigg( M\bigg(\prod_{j=1}^{m} \HH\big(|f_j|\big)^r\bigg)(x)\bigg)^{1/r}.
 \end{align*}
Since $w_1,\dots,w_m\in A_1$, it follows that $\nu=\prod_{j=1}^{m}w_j^{p/p_j}\in A_1$.
Finally, using the $L^{{p}/{r}}(\nu)$ boundedness of the maximal operator $M$ in \eqref{weightHLmax}, H\"older's inequality, and \eqref{hhlpwest}, we obtain
\begin{align*}
\big\Vert \chi_{B_R} S_{T}^{R,r,\mathrm{out}}(f_1,\dots,f_m)\big\Vert_{L^p(\nu)} &\lesssim  \prod_{j=1}^{m}\big\Vert \HH(|f_j|)\big\Vert_{L^{p_j}(w_j)} \lesssim \prod_{j=1}^{m}\Vert f_j\Vert_{L^{p_j}(w_j)}.
\end{align*}
The desired result follows from the embedding $L^{p_j}\log{L}(w_j)\hookrightarrow L^{p_j}(w_j).$ \qed

\hfill

\section{Proof of Theorem \ref{thm:bilinear-global}}\label{sec:12}

Let $u_j\in A_1$, $j=1\dots,m$, and set
$$\varrho:=\prod_{j=1}^{m}u_j^{p/p_j}.$$
We first claim the estimate
\begin{equation}\label{eq:bilinear-global-A1}
\big\Vert S_T^r(f_1,\dots,f_m)\big\Vert_{L^p(\varrho)}\lesssim \prod_{j=1}^m \Big\Vert \big(\HH(|f_j|^{p_j})\big)^{1/p_j}\Big\Vert_{L^{p_j}(u_j)} .
\end{equation}
To prove the claim, we fix $0<\alpha<n$, and for $N\in\mathbb N$, define
$$u_{j,N}(x):=\min\big\{u_j(x),N|x|^{-\alpha}\big\}, \qquad j=1,\dots,m.$$
Then for each $j=1,\dots,m$, $u_{j,N}\nearrow u_j$ almost everywhere as $N\to\infty$. Moreover, as in \eqref{una1ua1uniform} and \eqref{hhunxfinite}, each $u_{j,N}$ belongs to $A_1$ uniformly in $N$, and
$$\HH u_{j,N}(x)<\infty \qquad \text{for a.e. }x\in\bbrn.$$
Hence, by Lemma \ref{ca1}, for each $j=1,\dots, m$,
$$\HH u_{j,N}\in A_1 \qquad\text{and}\qquad [\HH u_{j,N}]_{A_1}\lesssim [u_j]_{A_1},$$
uniformly in $N$. 
In particular, by Lemma \ref{rdrd}, the weights $\HH u_{j,N}$ satisfy the dyadic annular condition \eqref{biconsondyad} uniformly in $N$.

We also set
$$\varrho_N:=\prod_{j=1}^{m}u_{j,N}^{p/p_j}$$
so that $\varrho_{N}\nearrow \varrho$ almost everywhere as $N\to\infty$.
Since $r<p$, by H\"older's inequality we have
$$ S_T^r\big(f_1,\dots,f_m\big)(x)\le \bigg(\frac1{|x|^n}\int_{|y|\le |x|}\big|T\big(f_1,\dots,f_m\big)(y)\big|^p\,dy\bigg)^{1/p}.$$
Therefore, by Fubini's theorem,
\begin{align*}
&\int_{\bbrn}\big|S_T^r\big(f_1,\dots,f_m\big)(x)\big|^p\varrho_N(x)\,dx \\
&\le \int_{\bbrn} \bigg(\frac{1}{|x|^n}\int_{|y|\le |x|} \big|T\big(f_1,\dots,f_m\big)(y)\big|^p\,dy\bigg)\varrho_N(x)\,dx  \\
&= \int_{\bbrn} \big|T\big(f_1,\dots,f_m\big)(y)\big|^p \bigg(\int_{|x|\ge |y|}\frac{\varrho_N(x)}{|x|^n}\,dx\bigg)\,dy  \\
&\le \int_{\bbrn} \big|T\big(f_1,\dots,f_m\big)(y)\big|^p \HH\varrho_N(y)\,dy .
\end{align*}
By H\"older's inequality applied to the positive kernel defining $\HH$, we obtain
$$ \HH\varrho_N(y)  \le \prod_{j=1}^{m}\big(\HH u_{j,N}(y)\big)^{p/p_j}.$$
Consequently,
\begin{align*}
&\int_{\bbrn}\big|S_T^r\big(f_1,\dots,f_m\big)(x)\big|^p\varrho_N(x)\,dx \\
&\le \int_{\bbrn} \big|T\big(f_1,\dots,f_m\big)(y)\big|^p \prod_{j=1}^{m}\big(\HH u_{j,N}(y)\big)^{p/p_j}\,dy .
\end{align*}
Since each $\HH u_{j,N}$ belongs to $A_{p_j}$ and satisfies \eqref{biconsondyad}, we may apply Theorem \ref{bilinearHerz} in the setting $q_j=p_j$, $j=1,\dots,m$.  This gives
\begin{align*}
&\bigg( \int_{\bbrn} \big|T\big(f_1,\dots,f_m\big)(y)\big|^p \prod_{j=1}^{m}\big(\HH u_{j,N}(y)\big)^{p/p_j}\,dy \bigg)^{1/p}  \\
&\lesssim \prod_{j=1}^m \bigg(\int_{\bbrn}|f_j(y)|^{p_j}\HH u_{j,N}(y)\,dy\bigg)^{1/p_j}
\end{align*}
uniformly in $N$.
Using the symmetry of the kernel of $\HH$ and Fubini's theorem, we have
$$\int_{\bbrn} |f_j(y)|^{p_j}\HH u_{j,N}(y)\,dy = \int_{\bbrn} u_{j,N}(y) \HH\big(|f_j|^{p_j}\big)(y)\,dy \le \int_{\bbrn} u_j(y)\HH\big(|f_j|^{p_j}\big)(y)\,dy.$$
Thus
$$\big\Vert S_T^r(f_1,\dots,f_m)\big\Vert_{L^p(\varrho_N)} \lesssim \prod_{j=1}^m \Big\Vert \HH\big(|f_j|^{p_j}\big)^{1/p_j} \Big\Vert_{L^{p_j}(u_j)},$$
with a constant independent of $N$. Letting $N\to\infty$ and using the monotone convergence theorem, we obtain \eqref{eq:bilinear-global-A1}.\\

We now apply Lemma \ref{multilinearRdf} to the tuple
$$ \Big(S_T^r(f_1,\dots,f_m), \HH\big(|f_1|^{p_1}\big)^{1/p_1},\cdots, \HH\big(|f_m|^{p_m}\big)^{1/p_m} \Big).$$
From \eqref{eq:bilinear-global-A1}, it follows that for every $q_j>p_j$, $j=1,\dots,m$, with $1/q_1+\cdots+1/q_m=1/q$, and for every $w_j\in A_{q_j/p_j}$, $j=1,\dots,m$, we have
$$\big\Vert S_T^r(f_1,\dots,f_m)\big\Vert_{L^q(\nu)} \lesssim \prod_{j=1}^m \Big\Vert \HH\big(|f_j|^{p_j}\big)^{1/p_j} \Big\Vert_{L^{q_j}(w_j)}= \prod_{j=1}^m
\big\Vert \HH\big(|f_j|^{p_j}\big)\big\Vert_{L^{q_j/p_j}(w_j)}^{1/p_j},$$
where $\nu:=\prod_{j=1}^{m}w_j^{q/q_j}$.
Since $q_j/p_j>1$ and $w_j\in A_{q_j/p_j}$, the weighted boundedness of $\HH$,
namely \eqref{hhlpwest}, yields
$$\big\Vert \HH\big(|f_j|^{p_j}\big)\big\Vert_{L^{q_j/p_j}(w_j)} \lesssim \big\Vert |f_j|^{p_j}\big\Vert_{L^{q_j/p_j}(w_j)} = \Vert f_j\Vert_{L^{q_j}(w_j)}^{p_j}, \qquad j=1,\dots,m.$$
Finally,
$$\big\Vert S_T^r(f_1,\dots,f_m)\big\Vert_{L^q(\nu)} \lesssim \prod_{j=1}^{m}\Vert f_j\Vert_{L^{q_j}(w_j)}.$$
Since
$$ \big\Vert T(f_1,\dots,f_m)\big\Vert_{\mathcal{C}_q^{(r)}(\nu)} = \big\Vert S_T^r(f_1,\dots,f_m)\big\Vert_{L^q(\nu)},$$
the proof is complete.  \qed

\hfill

\section{Proof of Theorem \ref{thm:bilinear-global2}}\label{sec:13}

Let $1<q_1,\dots,q_m<\infty$ and let $w_j\in A_{q_j}$.
By the openness property of Muckenhoupt weights, for each $j$ there exists $1<t_j<q_j$ such that $w_j\in A_{q_j/t_j}$.
Setting $1/t=1/t_1+\cdots+1/t_m$, we have $r<t<q$.
 By the hypothesis, $T$ is bounded from $L^{t_1}\times\cdots\times L^{t_m}$ into $L^t$.
Applying Theorem \ref{thm:bilinear-global} with $p_j=t_j$ gives the desired result. \qed

\hfill


\begin{thebibliography}{99}


\bibitem{Al_Gr2026}
A. Al-Salman and L. Grafakos, \emph{On rough oscillatory singular integral operators}, Forum Math. \textbf{38} (2026), 955--975.

\bibitem{Al_Pa2002}
A. Al-Salman and Y. Pan, \emph{Singular integrals with rough kernels in $L\log{L}(\mathbb{S}^{n-1})$}, J. Lond. Math. Soc. (2) \textbf{66} (2002), no. 1, 153--174.

\bibitem{am:am} 
S.V. Astashkin  and L. Maligranda,  \emph{Ces\`aro function spaces fail the fixed point property},  Proc. Amer. Math. Soc. \textbf{136}  (2008), no. 12, 4289--4294.

\bibitem{am2:am2} 
S.V. Astashkin  and L. Maligranda,  \emph{A short proof of some recent results related to Ces\`aro function spaces},  Indag. Math.  \textbf{24}  (2014), no. 3, 589--592.

\bibitem{bs:bs} 
A. Baernstein  and E.T. Sawyer,  \emph{Embedding and multiplier theorems for $H^p(\mathbb R^n)$},  Mem. Amer. Math. Soc.  \textbf{53} (1985), no. 318, iv+82 pp.


\bibitem{Be_Sh1988}
C. Bennett and R. Sharpley, \emph{Interpolation of Operators}, Academic Press, 1988.

\bibitem{Ca_Zy1952}
A.P. Calder\'on and A. Zygmund,  \emph{On the existence of certain singular integrals}, Acta Math. \textbf{88} (1952), 85-139.

\bibitem{Ca_Zy1956}
A.P. Calder\'on and A. Zygmund,  \emph{On singular integrals}, Amer. J. Math. \textbf{78} (1956), 289-309.

\bibitem{Ch_Ta2022}
Y. Chen and W. Tao, \emph{Quantitative weighted estimates for oscillatory singular integrals with rough kernels}, Bull. Korean Math. Soc. \textbf{59} (2022), 191--202.

\bibitem{Ch_Sh_Sh2025}
S.S. Choudhary, S. Shrivastava and K. Shuin, \emph{Sparse bounds for maximal oscillatory rough singular integral operators}, Bull. Sci. Math. \textbf{201} (2025), Paper No. 103612, 20pp.

\bibitem{Ch_Gr1995}
M. Christ and L. Grafakos, \emph{Best constants for two nonconvolution inequalities}, Proc. Amer. Math. Soc. \textbf{123} (1995), 1687--1693.


\bibitem{Cr_Ma2018}
D. Cruz-Uribe and J.M. Martell, \emph{Limited range multilinear extrapolation with applications to the bilinear Hilbert transform}, Math. Ann. \textbf{371} (2018), 615--653.







 \bibitem{Do_Sl2024} 
 G. Dosidis and L. Slav\'ikov\'a, \emph{Multilinear singular integrals with homogeneous kernels near $L^1$},  Math. Ann. \textbf{389} (2024), 2259--2271. 
 
 \bibitem{Do_Sl_Park2026} 
 G. Dosidis, L. Slav\'ikov\'a, and B. Park, \emph{Bilinear rough singular integrals near the critical integrability via sharp Fourier multiplier criteria}, 
 Trans. Amer. Math. Soc. to appear. 

\bibitem{Duoa1993}
J. Duoandikoetxea,  \emph{Weighted norm inequalities for homogeneous singular integrals}, Trans. Amer. Math. Soc. \textbf{336} (1993), 869--880.

\bibitem{Duo2011} 
J. Duoandikoetxea, \emph{Extrapolation of weights revisited: new proofs and sharp bounds}, J. Funct. Anal. \textbf{260}  (2011),  no. 6, 1886--1901.


\bibitem{Duo_Ru1986} 
J. Duoandikoetxea and J.L. Rubio de Francia, \emph{Maximal and singular integral operators via Fourier transform estimates}, Invent. Math. \textbf{84}  (1986), 541--561.


\bibitem{f:f} 
W.G. Faris, \emph{Weak Lebesgue spaces and quantum mechanical binding},  Duke Math. J. \textbf{43}  (1976),  no. 2, 365--373.

\bibitem{Fe1979} 
R. Fefferman, \emph{A note on singular integrals}, Proc. Amer. Math. Soc. \textbf{74}  (1979),  no. 2, 266--270.

\bibitem{fw:fw} 
H.G. Feichtinger and F. Weisz, \emph{Herz spaces and summability of Fourier transforms},  Math. Nachr. \textbf{281}  (2008),  no. 3, 309--324.

\bibitem{gh:gh} 
J. Garc\'ia-Cuerva and M.J. Herrero, \emph{A theory of Hardy spaces associated to the Herz spaces}, Proc. London Math. Soc.  \textbf{69}  (1994),  no. 3, 605--628.

\bibitem{gr:gr}
J. Garc\'{\i}a-Cuerva and J.L. Rubio de Francia, \emph{Weighted Norm Inequalities and Related Topics}, North-Holland Mathematics Studies, Vol.~116, Notas de Matem\'atica [Mathematical Notes], 104. North-Holland Publishing Co., Amsterdam, 1985.

\bibitem{Gr_He_Ho2018}
L. Grafakos, D. He and P. Honz\'ik, \emph{Rough bilinear singular integrals}, Adv. Math. \textbf{326} (2018), 54--78.

\bibitem{Gr_He_Ho_Park_JLMS}
L. Grafakos, D. He, P. Honz\'ik, and B. Park,  \emph{Multilinear rough singular integral operators}, J. London Math. Soc. \textbf{109} (2024), e12867, 35pp.

\bibitem{gly:gly} L. Grafakos, X. Li and D. Yang, \emph{Bilinear operators on Herz-type Hardy spaces}, Trans. Amer. Math. Soc.  \textbf{350} (1998),  no. 3, 1249--1275.

\bibitem{Gr_To2002} 
L. Grafakos and R.H. Torres, \emph{Multilinear Calder\'on-Zygmund Theory}, Adv. Math.  \textbf{165} (2002),  124--164.





\bibitem{He_Park2023}
D. He and B. Park,  \emph{Improved estimates for bilinear rough singular integrals}, Math. Ann. \textbf{386} (2023), 1951-1978.

\bibitem{h:h} 
C.S. Herz, \emph{Lipschitz spaces and Bernstein's theorem on absolutely convergent Fourier transforms}, J. Math. Mech.  \textbf{18} (1968),  283--323.


\bibitem{JiangLu1995}
Y.S. Jiang and S.Z. Lu, \emph{Oscillatory singular integrals with rough kernel}, in \emph{Harmonic Analysis in China}, Math. Appl., vol. 327, Kluwer Academic Publishers, Dordrecht, 1995, 135--145.

\bibitem{kk:kk} 
N.J. Kalton and A. Kami\'nska, \emph{Type and order convexity of Marcinkiewicz and Lorentz spaces and applications}, Glasg. Math. J.  \textbf{47} (2005),  no. 1, 123--137.


\bibitem{lm:lm} 
K. Le\'snik and L. Maligranda , \emph{Abstract Ces\`aro spaces. Optimal range.}, Integral Equations Operator Theory   \textbf{81} (2015),  no. 2, 227--235.

 \bibitem{Lu_Zh1992} 
S.Z. Lu and Y. Zhang, \emph{Criterion on $L^p$-boundedness for a class of oscillatory singular integrals with rough kernels}, Rev. Mat. Iberoam.  \textbf{8} (1992),  no. 2, 201--219.

\bibitem{m:m} 
B. Muckenhoupt, \emph{Weighted norm inequalities for the Hardy maximal function},   Trans. Amer.  Math. Soc.  \textbf{165}  (1972), 207--226.


\bibitem{Na1986} 
J. Namazi, \emph{A singular integral}, Proc. Amer. Math. Soc.  \textbf{96}  (1986), no. 3, 421--424.

 \bibitem{Oj2000} 
H. Ojanen, \emph{Weighted estimates for rough oscillatory singular integrals}, J. Fourier Anal. Appl.  \textbf{6}  (2000), no. 4, 427--436.


\bibitem{Park_submitted} 
B. Park, \emph{Weighted estimates for multilinear singular integrals with rough kernels}, submitted.

 \bibitem{RicciStein1987}
F. Ricci and E.M. Stein, \emph{Harmonic analysis on nilpotent groups and singular integrals. I. Oscillatory integrals}, J. Funct. Anal. \textbf{73} (1987), no. 1, 179--194.


 \bibitem{So_We1994} 
 F. Soria and G. Weiss, \emph{A remark on singular integrals and power weights},  Indiana Univ. Math. J.  \textbf{43} (1994), no. 1, 187--204. 
 
\bibitem{St1967} 
E.M. Stein, \emph{Note on singular integrals}, Proc. Amer. Math. Soc.  \textbf{8} (1967),  250--254. 


  \bibitem{s:s} 
 J. Shiue, \emph{On the Ces\`aro sequence spaces},  Tamkang J. Math.  \textbf{1} (1970), no. 1, 19--25.
 
 \bibitem{Wa1990}
D. Watson, \emph{Weighted estimates for singular integrals via Fourier transform}, Duke Math. J. \textbf{60} (1990), 389--399.
 
 


\end{thebibliography}
\end{document}